\definecolor{darkblue}{rgb}{0,0,.5}
\definecolor{darkgreen}{rgb}{.2,0.5,.2}
\numberwithin{equation}{section}
\newtheorem{thm}{Theorem}[section]
\newtheorem{conj}[thm]{Conjecture}
\newtheorem{lm}[thm]{Lemma}
\newtheorem{cl}[thm]{Corollary}
\newtheorem{prop}[thm]{Proposition}
\theoremstyle{remark}
\newtheorem{ex}[thm]{Example}
\newtheorem{rmk}[thm]{Remark}
\theoremstyle{definition}
\newtheorem{df}[thm]{Definition}
\newcommand{\gt}{\mathfrak}
\newcommand{\Hom}{\mathrm{Hom}}
\newcommand{\ad}{\mathrm{ad}}
\newcommand {\ess}{{\mathcal Es}}
\newcommand {\bv}{{\boldsymbol{v}}}
\newcommand {\cS}{{\mathcal S}}
\newcommand {\mK}{{\mathbb C}}
\renewcommand{\le}{\leqslant}
\renewcommand{\ge}{\geqslant}
\font\euszw=eusm10 scaled 1200%
\font\eusac=eusm7 scaled 1200%
\font\eusacc=eusm7 scaled 1000%
\newcommand{\non}{\nonumber}
\newcommand{\ot}{\otimes}
\newcommand{\la}{\lambda}
\newcommand{\La}{\Lambda}
\newcommand{\De}{\Delta}
\newcommand{\al}{\alpha}
\newcommand{\be}{\beta}
\newcommand{\ga}{\gamma}
\newcommand{\si}{\sigma}
\newcommand{\de}{\delta}
\newcommand{\ze}{\zeta}
\newcommand{\ve}{\varepsilon}
\newcommand{\ts}{\,}
\newcommand{\tss}{\hspace{1pt}}
\newcommand{\U}{{\mathcal U}}
\newcommand{\CC}{\mathbb{C}\tss}
\newcommand{\ZZ}{\mathbb{Z}\tss}
\newcommand{\Y}{ {\rm Y}}
\newcommand{\Zr}{{\rm Z}}
\newcommand{\gl}{\mathfrak{gl}}
\newcommand{\oa}{\mathfrak{o}}
\newcommand{\spa}{\mathfrak{sp}}
\newcommand{\ggot}{\mathfrak{g}}
\newcommand{\rgot}{\mathfrak{r}}
\newcommand{\h}{\mathfrak h}
\newcommand{\n}{\mathfrak n}
\newcommand{\sgn}{ {\rm sgn}\ts}
\newcommand{\fand}{\quad\text{and}\quad}
\newcommand{\Fand}{\qquad\text{and}\qquad}
\newcommand{\bth}{\begin{thm}}
\renewcommand{\eth}{\end{thm}}
\newcommand{\bpr}{\begin{prop}}
\newcommand{\epr}{\end{prop}}
\newcommand{\ble}{\begin{lm}}
\newcommand{\ele}{\end{lm}}
\newcommand{\bco}{\begin{cl}}
\newcommand{\eco}{\end{cl}}
\newcommand{\bex}{\begin{ex}}
\newcommand{\eex}{\end{ex}}
\newcommand{\bre}{\begin{rmk}}
\newcommand{\ere}{\end{rmk}}
\newcommand{\bcj}{\begin{conj}}
\newcommand{\ecj}{\end{conj}}
\newtheorem*{mthma}{Theorem A}
\newtheorem*{mthmb}{Theorem B}
\newcommand{\bal}{\begin{aligned}}
\newcommand{\eal}{\end{aligned}}
\newcommand{\beq}{\begin{equation}}
\newcommand{\eeq}{\end{equation}}
\newcommand{\ben}{\begin{equation*}}
\newcommand{\een}{\end{equation*}}
\newcommand{\bpf}{\begin{proof}}
\newcommand{\epf}{\end{proof}}
\def\beql#1{\begin{equation}\label{#1}}
\begin{document}
\hfill {\scriptsize  November 3, 2019}
\vskip1ex

\title{Monomial bases and branching rules}
\author[A.\,Molev]{Alexander Molev}
\address[A.\,Molev]
{School of Mathematics and Statistics,
University of Sydney,
NSW 2006, Australia}
\email{alexander.molev@sydney.edu.au}
\author[O.\,Yakimova]{Oksana Yakimova}
\address[O.\,Yakimova]{Universit\"at zu K\"oln,
Mathematisches Institut, Weyertal 86-90, 50931 K\"oln, Deutschland}
\email{oksana.yakimova@uni-jena.de}
\curraddr{Institut f\"ur Mathematik, Friedrich-Schiller-Universit\"at Jena, Jena, 07737, Deutschland}
\thanks{The second author is 
funded by the Deutsche Forschungsgemeinschaft (DFG, German Research Foundation) --- project  numbers 330450448, 404144169.}
\keywords{}
\begin{abstract}
Following a question of Vinberg, a general method to construct monomial bases
in finite-dimensional irreducible representations of a reductive Lie algebra $\ggot$
was developed in a series of papers by Feigin, Fourier, and Littelmann.
Relying on this method,
we construct monomial bases of multiplicity spaces
associated with the restriction of the representation to a reductive subalgebra $\ggot_0\subset\ggot$.
As an application, we produce  new monomial bases for representations of the 
symplectic Lie algebra associated with a natural chain of subalgebras.
One of our bases
is related via  a triangular transition matrix to
a suitably modified version of the basis constructed earlier by the first author.
In type {\sf A}, our approach shows that 
the Gelfand--Tsetlin basis and the canonical basis of Lusztig have a
common PBW-parameterisation. This implies that the transition matrix between them is triangular.
We show also that  a  similar relationship holds for the Gelfand--Tsetlin and the Littelmann bases in
type {\sf A}.
\end{abstract}
\maketitle

\section*{Introduction}

A general method to construct monomial bases
in finite-dimensional irreducible representations of a reductive Lie algebra $\ggot$
has been developed in a series of papers by
E.~Feigin, G.~Fourier,  and P.~Littelmann~\cite{ffl:pbwa,ffl:pbwc,ffl:fm} following a question and initial
examples of E.~Vinberg. In accordance with this method, one chooses a triangular decomposition
$\gt g=\gt n^-\oplus\gt h\oplus\gt n^+$ and a basis $\{f_1,\ldots,f_N\}$
of the nilpotent Lie algebra $\gt n^-$ consisting
of root vectors. Let $V(\la)$ be a finite-dimensional irreducible $\ggot$-module
and let $v_{\lambda}\in V(\la)$ be a highest weight vector.
By introducing special orderings on monomials in the basis elements $f_i$
it is possible to specify conditions on the powers $\al_i$ so that the vectors
\ben
f_1^{\al_1}\dots f_N^{\al_N}\tss v_{\lambda}
\een
form a basis of $V(\la)$. Such conditions are given in an explicit form
for types {\sf A} and {\sf C} in \cite{ffl:pbwa} and \cite{ffl:pbwc}, respectively.
A unified approach is presented in \cite{ffl:fm}.

One of the features of the initial solutions \cite{ffl:pbwa, ffl:pbwc}  is that
a {\it homogeneous} order on the monomials was used, which  means that the degrees are compared first.
In such a setup, the sequence  of factors is not significant.
By now there is a  tremendous development in the area,
with both geometric and combinatorial applications,  and numerous variations have been studied,
see e.g.  \cite{SP, xfp} and references therein. Of particular interesest and importance are connections
with the Littelmann bases \cite{l:cc} and with the PBW-type versions of the canonical basis
of Lusztig \cite{L1,L2}, see
\cite[Sect.~11\&12]{xfp}.

Our goal in this paper is to 
adjust the FFLV method to construct bases
of the multiplicity spaces associated with the restriction of $V(\la)$
to a reductive subalgebra $\ggot_0$.
Given a finite-dimensional irreducible $\ggot_0$-module $V'(\mu)$,
the corresponding
{\em multiplicity space} is defined by
\ben
U(\la,\mu)=\Hom_{\ggot_0}\big(V'(\mu),V(\la)\big).
\een
Note that $U(\la,\mu)$ is isomorphic to the subspace $V(\la)^+_{\mu}$
of $\ggot_0$-highest weight vectors in $V(\la)$ of weight $\mu$ and we have
a vector space decomposition
\beql{dectp}
V(\la)\cong \underset{\mu}{\oplus}\ts\ts V(\la)^+_{\mu}\otimes V'(\mu).
\eeq
Hence, if some bases of the spaces $V(\la)^+_{\mu}$ and $V'(\mu)$ are produced, then
the decomposition \eqref{dectp} yields the
natural tensor product basis of $V(\la)$.
The celebrated Gelfand--Tsetlin bases \cite{gt:fdu, gt:fdo}
for representations of the general linear
and orthogonal Lie algebras are obtained by
iterating this procedure and applying it
to the subalgebras of the chains
\ben
\gl_1\subset \gl_2\subset\dots\subset \gl_n\Fand
\oa_2\subset \oa_3\subset\dots\subset \oa_N.
\een

The multiplicity spaces $V(\la)^+_{\mu}$ corresponding to the pairs
of orthogonal and symplectic Lie algebras $\oa_{N-2}\subset \oa_{N}$
and $\spa_{2n-2}\subset \spa_{2n}$ turned out to carry representations
of certain quantum algebras originally
introduced by Olshanski~\cite{o:ty} and which are known as {\em twisted Yangians}.
The Yangian representation theory together with the theory of {\em Mickelsson algebras}
developed in the work by Zhelobenko~\cite{z:gz, z:ep, z:it} have lead to a construction
of bases of the Gelfand--Tsetlin type for representations
of the orthogonal and symplectic Lie algebras; see review paper~\cite{m:gtb}
and book \cite[Ch.~9]{m:yc} for a detailed exposition of these results, as well as a discussion
of various approaches to constructions of Gelfand--Tsetlin-type bases
in the literature.

The Zhelobenko theory allows one to describe the multiplicity spaces $V(\la)^+_{\mu}$
corresponding to the pair $\ggot_0\subset\ggot$ as linear spans of
{\em lowering operators} obtained via the action of the {\em extremal projector} $p$
associated with the Lie algebra $\ggot_0$.
Our main general result provides precise choices
of those operators to form a basis of $V(\la)^+_{\mu}$. These choices are made in the spirit
of the FFLV method and rely on some special monomial order.
In more detail, we will assume that
$\ggot_0\subset\ggot$ is a reductive subalgebra normalised
by $\gt h$.
Then $\gt g_0$ inherits the triangular decomposition
$\gt g_0=\gt n_0^+\oplus\gt h_0\oplus\gt n_0^-$ with
$\gt n_0^{\pm}=\gt n^{\pm}\cap\gt g_0$ and $\gt h_0=\gt h\cap\gt g_0$.
Let $\gt n^-=\gt n_0^-\oplus \gt r$
be an $\gt h$-stable vector space decomposition. We describe
a family of admissible monomials $m\in \U(\rgot)$ 
such that
the elements $p\tss m\tss v_{\lambda}$ form a basis of the multiplicity space $V(\la)^+_{\mu}$.

In order  to obtain a basis for $V(\la)$ inductively, it suffices to produce first a basis
for a quotient of $V(\lambda)$ that is isomorphic  to $U(\lambda,\mu)$ in some natural way.
This idea is used  in  \cite{l:cc} and \cite{Gor}.  In the latter, an answer to Vinberg's question
for the orthogonal Lie algebra is given.
We formalise the method that can be called the ``FFLV-branching'' in Section~\ref{fflv-br}  and as an application produce  a new answer to Vinberg's question in type {\sf C} in Section~\ref{br}.

Recall that finite-dimensional irreducible representations of $\gl_n$
are parameterised by their highest weights $\la=(\la_1,\dots,\la_n)$ which
are $n$-tuples of complex numbers satisfying the conditions
$\la_i-\la_{i+1}\in\ZZ_+$ for all $i=1,\dots,n-1$.
Later on we will need to work simultaneously with irreducible highest weight
representations of $\spa_{2n}$ and $\gl_n$. To avoid a confusion we
will denote such representations of $\gl_n$ by $L(\lambda)$ while keeping the notation
$V(\lambda)$ in the context of general complex reductive Lie algebras
and in the particular case of the symplectic Lie algebras.
Thus $L(\lambda)$
is generated by a nonzero vector $v_{\lambda}$
such that
\begin{alignat}{2}
\label{hwglnrep}
E_{ij}\ts v_{\lambda}&=0 \qquad &&\text{for} \quad
1\leqslant i<j\leqslant n, \qquad \text{and}\\
E_{ii}\ts v_{\lambda}&=\la_i\ts v_{\lambda} \qquad &&\text{for}
\quad 1\leqslant i\leqslant n,
\non
\end{alignat}
where the $E_{ij}$ denote the standard basis elements of $\gl_n$.
A {\em Gelfand--Tsetlin pattern}
$\Lambda$ associated with
$\lambda$ is an array of row vectors
\begin{align}
&\qquad\lambda^{}_{n\tss 1}\qquad\lambda^{}_{n\tss 2}
\qquad\qquad\cdots\qquad\qquad\lambda^{}_{n\tss n}\non\\
&\qquad\qquad\lambda^{}_{n-1\ts 1}\qquad\ \ \cdots\ \
\ \ \qquad\lambda^{}_{n-1\ts n-1}\non\\
&\quad\qquad\qquad\cdots\qquad\cdots\qquad\cdots\non\\
&\quad\qquad\qquad\qquad\lambda^{}_{2\tss 1}\qquad\lambda^{}_{2\tss 2}\non\\
&\quad\qquad\qquad\qquad\qquad\lambda^{}_{1\tss 1}  \non
\end{align}
where the upper row coincides with $\lambda$ and
the following conditions hold
\beql{aconl}
\lambda^{}_{k\tss i}-\lambda^{}_{k-1\ts i}\in\ZZ_+,\qquad
\lambda^{}_{k-1\ts i}-\lambda^{}_{k\ts i+1}\in\ZZ_+,\qquad
i=1,\dots,k-1
\eeq
for each $k=2,\dots,n$.

Let $\{\xi_\Lambda\}$ be the Gelfand--Tsetlin basis for $L(\lambda)$, see
Section~\ref{sec:rgt} for its detailed description.

\begin{mthma} {\em
Let $\{\pi_\Lambda\}$ be the set of vectors
\ben
\pi_{\La}=E_{2\tss 1}^{\la^{}_{2\tss 1}-\la^{}_{1\tss 1}}
E_{3\tss 1}^{\la^{}_{3\tss 1}-\la^{}_{2\tss 1}}  E_{3\tss 2}^{\la^{}_{3\tss 2}-\la^{}_{2\tss 2}}
\dots E_{n\tss 1}^{\la^{}_{n\tss 1}-\la^{}_{n-1\ts 1}}
\dots E_{n\ts n-1}^{\la^{}_{n\tss n-1}-\la^{}_{n-1\tss n-1}}\tss v_{\lambda}\, ,
\een
where $\Lambda$ runs over  all Gelfand--Tsetlin patterns associated with
$\lambda$. Then $\{\pi_\Lambda\}$  is a PBW-para\-me\-te\-risation of  $\{\xi_\Lambda\}$, i.e.,
there is an order
$\succ$ on $\{\Lambda\}$ such that
$$
\xi_{\Lambda} = \sum_{\La'\succcurlyeq \La} d_{\La,\La'}\tss \pi^{}_{\La'}
$$
with $d_{\Lambda,\Lambda'}\in\mK$ and $d_{\Lambda,\Lambda}\ne 0$.
In particular,  $\{\pi_\Lambda\}$ is a basis for $L(\la)$.}
\end{mthma}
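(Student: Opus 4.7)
The plan is to proceed by induction on $n$, applying the FFLV-branching theorem of Section~\ref{fflv-br} iteratively to the chain $\gl_1\subset\gl_2\subset\dots\subset\gl_n$. For the inductive step $\gl_{n-1}\subset\gl_n$, the complementary subspace is $\rgot=\bigoplus_{i=1}^{n-1}\CC\ts E_{n\ts i}$. Since branching in type $\sf A$ is multiplicity-free, the space $L(\la)^+_\mu$ is at most one-dimensional and is nonzero exactly when $\mu=(\la^{}_{n-1\ts 1},\dots,\la^{}_{n-1\ts n-1})$ is compatible with $\la$ via \eqref{aconl}. The FFLV-branching theorem then selects a unique admissible monomial $m_\mu\in\U(\rgot)$ such that $p\ts m_\mu\ts v_\la\ne 0$, where $p$ denotes the extremal projector of $\gl_{n-1}$; by weight count and the explicit form of the admissibility conditions this monomial must be
$$
m_\mu = E_{n\ts 1}^{\la_{n\ts 1}-\la_{n-1\ts 1}}\cdots E_{n\ts n-1}^{\la_{n\ts n-1}-\la_{n-1\ts n-1}}.
$$

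Iterating through the chain yields a basis $\{\widetilde{\pi}_\La\}$ of $L(\la)$ whose elements interleave the PBW segments appearing in $\pi_\La$ with the extremal projectors $p^{(k)}$ of $\gl_{k-1}$ for $k=n,n-1,\dots,2$. Using the Zhelobenko realisation of the Gelfand--Tsetlin vectors by lowering operators (see e.g.~\cite[Ch.~9]{m:yc}) together with the inductive hypothesis applied inside the $\gl_{n-1}$-submodule generated by $p\ts m_\mu\ts v_\la$, one obtains a triangular expansion $\xi_\La=\sum_{\La'\succcurlyeq\La}c_{\La,\La'}\tss\widetilde{\pi}_{\La'}$ with $c_{\La,\La}\ne 0$, in a natural order $\succ$ refining the PBW degrees row by row from the innermost row of the pattern outwards.

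It remains to pass from $\{\widetilde{\pi}_\La\}$ to $\{\pi_\La\}$ by removing the projectors while keeping triangularity. Writing $p^{(k)}=1+q^{(k)}$ with $q^{(k)}\in\U(\n_0^-)_+\tss\U(\n_0^+)_+$ and expanding $p^{(k)}\ts m\ts v_\la=m\ts v_\la+q^{(k)}\ts m\ts v_\la$, one uses $e\ts v_\la=0$ for $e\in\n^+$ to push every $\U(\n_0^+)$-factor through $m$ via the commutators $[E_{i\ts j},E_{n\ts k}]=-\de_{i\ts k}\ts E_{n\ts j}$ for $i<j<n$, and then reorders the resulting expression into PBW form in $\rgot$. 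Each such rewriting strictly increases the exponent multiset in the fixed order $\succ$, so the correction terms lie in the span of $\{\widetilde{\pi}_{\La'}:\La'\succ\La\}$. Composing the two triangular changes of basis then gives the desired expansion $\xi_\La=\sum_{\La'\succcurlyeq\La}d_{\La,\La'}\tss\pi_{\La'}$ with $d_{\La,\La}\ne 0$, whence $\{\pi_\La\}$ is automatically a basis.

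The main technical obstacle is producing a single order $\succ$ on the set of patterns that is simultaneously compatible with the FFLV-branching selection of $m_\mu$, the projector-removal reduction, and the inductive Zhelobenko structure at every level of the chain $\gl_1\subset\dots\subset\gl_n$. Once such an order has been fixed, the theorem follows by direct composition of the three triangular relations described above, applied level by level.
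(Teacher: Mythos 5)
Your overall strategy is sound, and it is essentially the route the paper itself sketches as the ``FFLV proof'' of Theorem~A (Section~\ref{sec-A-proofs} together with the argument in the proof of Corollary~\ref{cor:tramatwo}): multiplicity-free branching singles out the monomial $m_\mu$, Propositions~\ref{lm-ess-br} and~\ref{other} give the basis property of $\{\pi_\La\}$, and the relation \eqref{eq-p2} combined with \eqref{p-mu} identifies $\pi_\La$ as the leading term of $c_{\La,\La}\tss\xi_\La$. (The paper's official proof in Section~\ref{sec:rgt} runs the computation in the opposite direction: each $E_{n\tss k}$ is expanded by Zhelobenko's inversion formula, so that $\pi_\La$ is written as a triangular combination of the $\xi_{\La'}$; the two directions are equivalent, since either triangular matrix inverts to the other.)

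Two points need repair. First, the order. The theorem asserts the \emph{existence} of an order $\succ$, and your closing paragraph defers exactly this point; moreover, the order you gesture at (``row by row from the innermost row of the pattern outwards'') is the wrong way round. The order that works is lexicographic on the sequence of rows read from the top of the pattern downwards, $\La=(\bar\la_{n-1},\bar\la_{n-2},\dots,\bar\la_{1})$: the level-$n$ computation (Lemma~\ref{lem:hwcomp}) shows that the correction terms have $\gl_{n-1}$-highest weight $\mu'\succ\mu=\bar\la_{n-1}$ but gives no control over the inner rows, so an order comparing $\la_{1\tss 1}$ first cannot be triangular. Second, in the projector-removal step the correction $q^{(k)}\tss m\tss v_\la$ is not a combination of monomials in $\rgot$ alone; it is a combination of vectors $u\tss m'\tss v_\la$ with $u\in\U(\n^-_0)$ nontrivial and $m'\in\U(\rgot)$ of strictly higher $\gt h_0$-weight. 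A multiset-of-exponents count does not dispose of the factors $u$: what one actually needs is that these corrections lie in the $\gl_{n-1}$-isotypic components of weights $\mu'\succ\mu$ (equivalently, that they are of the form $u(\mu')\tss\xi_{\mu'}$ as in Lemma~\ref{lem:hwcomp}), so that after applying the lower-level factors they contribute only terms indexed by $\La'\succ\La$ in the row-lexicographic order above. With these two corrections your argument closes up and coincides with the paper's.
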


Theorem A
is proven in Section~\ref{sec:rgt} by essentially repeating the argument used by  Zhelobenko in \cite[Theorem~7]{z:gz} and \cite[Lemma~2]{z:ep}. We also indicate briefly how it follows from the FFLV-theory.

The basis described in Theorem~A is not new.
With a slightly different, but combinatorially equivalent, description it appeared in \cite{Markus-dis}
as a PBW-parameterisation of the canonical basis of Lusztig \cite{L1,L2}.  Therefore the theorem provides
a link between the Gelfand--Tsetlin and the canonical bases, see 
Corollary~\ref{cor:tramatwo}.
The same basis is described  in \cite[Theorem~2.6]{mah}.

We will regard the symplectic Lie algebra $\spa_{2n}$ as a subalgebra
of $\gl_{2n}$ and we will number
the rows and columns of $2n\times 2n$ matrices with
the indices $-n,\dots,-1,1,\dots,n$. Accordingly,
the zero value will be omitted in the summation or product formulas.
The Lie algebra
$\spa_{2n}$ is spanned by the elements $F_{ij}$ with
$-n\leqslant i,j\leqslant n$, defined by
\beql{fijroot}
F_{ij}=E_{ij}-\sgn i\ts\ts \sgn j\ts E_{-j,-i}.
\eeq
For any $n$-tuple of nonpositive integers
$\la=(\la_1,\dots,\la_n)$ satisfying the conditions
\ben
\la_1\geqslant\la_2\geqslant\dots\geqslant\la_n
\een
the finite-dimensional irreducible representation $V(\la)$ of
the Lie algebra $\spa_{2n}$ with the highest weight $\la$ is generated by a nonzero vector $v_{\lambda}$
such that
\begin{alignat}{2}
\label{hwrepbcd}
F_{ij}\ts v_{\lambda}&=0 \qquad &&\text{for} \quad
-n\leqslant i<j\leqslant n, \qquad \text{and}\\
F_{ii}\ts v_{\lambda}&=\la_i\ts v_{\lambda}
\qquad &&\text{for} \quad 1\leqslant i\leqslant n.
\non
\end{alignat}

Define a {\em type  {\sf C}
pattern} $\La$
associated with
$\la$ as an array of the form
\begin{align}
\quad&\qquad\la^{}_{n\tss 1}\qquad\la^{}_{n\tss 2}\qquad\la^{}_{n\tss 3}
\qquad\quad\cdots\quad\qquad\la^{}_{n\tss n}\non\\
&\la'_{n\tss 1}\qquad\  \la'_{n\tss 2}\qquad \la'_{n\tss 3}
\qquad\quad\cdots\quad\qquad\la'_{n\tss n}\non\\
\ &\qquad\la^{}_{n-1\ts 1}\quad\la^{}_{n-1\ts 2}\qquad\cdots
\qquad\la^{}_{n-1\ts n-1}\non\\
&\la'_{n-1\ts 1}\ \quad\la'_{n-1\ts 2}
\qquad\cdots\qquad\la'_{n-1\ts n-1}\non\\
&\qquad\cdots\qquad\cdots\qquad\cdots\non\\
\quad&\qquad\la^{}_{1\tss 1}\non\\
&\la'_{1\tss 1}\non
\end{align}
such that $\la^{}_{n\tss i}=\la^{}_{i}$ for $i=1,\dots,n$,
the remaining
entries are all nonpositive
integers and the following inequalities hold:
\ben
\la'_{k\tss 1}\geqslant\la^{}_{k\tss 1}
\geqslant\la'_{k\tss 2}\geqslant
\la^{}_{k\tss 2}\geqslant \dots\geqslant
\la'_{k\ts k-1}\geqslant\la^{}_{k\ts k-1}\geqslant
\la'_{k\tss k}\geqslant\la^{}_{k\tss k}
\een
for $k=1,\dots,n$, and
\ben
\la'_{k\tss 1}\geqslant\la^{}_{k-1\ts 1}
\geqslant\la'_{k\tss 2}\geqslant
\la^{}_{k-1\ts 2}\geqslant \dots\geqslant
\la'_{k\ts k-1}\geqslant\la^{}_{k-1\ts k-1}
\geqslant\la'_{k\tss k}
\een
for $k=2,\dots,n$.

\begin{mthmb} {\em The vectors
\ben 
\theta^{}_{\La}=
\prod_{k=1,\dots,n}^{\longrightarrow}
\Bigg(F_{k,-k}^{{\ts}-\la'_{k\tss 1}}\prod_{i=1}^{k-1}
F_{k,-i}^{\la^{}_{k-1\ts i}-\la'_{k\ts i+1}}
F_{-i,-k}^{\la^{}_{k\tss i}-\la'_{k\ts i+1}}
\Bigg)\ts v_{\lambda}
\een 
parameterised by all  type {\sf C}
patterns $\La$
associated with
$\la$ form a basis of $V(\la)$.}
\end{mthmb}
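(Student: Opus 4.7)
The plan is to proceed by induction on $n$ along the chain $\spa_2\subset\spa_4\subset\cdots\subset\spa_{2n}$, applying the FFLV-branching method formalised in Section~\ref{fflv-br} at each step. The base case $n=1$ is immediate: for $\spa_2\cong\sll_2$ with highest weight $\la_1\le 0$, the vectors $F_{1,-1}^{-\la'_{1,1}}v_\la$ as $\la'_{1,1}$ runs over $\la_1,\la_1+1,\ldots,0$ are linearly independent and number exactly $\dim V(\la)=1-\la_1$.

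For the inductive step, realise $\spa_{2n-2}\subset\spa_{2n}$ as the subalgebra supported on the indices $\{\pm 1,\ldots,\pm(n-1)\}$, normalised by the diagonal Cartan. The $\h$-stable complement $\rgot_n$ of $\n^-_0$ in $\n^-$ has dimension $2n-1$ and is spanned by $F_{n,-n}$ together with $F_{n,-i}$ and $F_{-i,-n}$ for $1\le i\le n-1$. Decomposing $V(\la)=\bigoplus_\mu V(\la)^+_\mu\otimes V'(\mu)$ under $\spa_{2n-2}$, the central computation is to identify the $k=n$ layer of $\theta_\La$ with the admissible FFLV-branching family for this pair: as the tuple $(\la'_{n,1},\ldots,\la'_{n,n})$ varies over configurations satisfying the betweenness relations in the top row of a type {\sf C} pattern with $\mu=(\la_{n-1,1},\ldots,\la_{n-1,n-1})$ fixed, the monomials
\[
m_n(\La)=F_{n,-n}^{-\la'_{n,1}}\prod_{i=1}^{n-1}F_{n,-i}^{\la_{n-1,i}-\la'_{n,i+1}}F_{-i,-n}^{\la_{n,i}-\la'_{n,i+1}}\in\U(\rgot_n)
\]
should be exactly the admissible family provided by Section~\ref{fflv-br}. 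Here I must verify both that the number of admissible tuples matches $\dim V(\la)^+_\mu$ via Zhelobenko's branching rule for $\spa_{2n-2}\subset\spa_{2n}$, and that the exponents fit the monomial order required by the FFLV-branching construction. Granting this, the vectors $p\cdot m_n(\La)\tss v_\la$, where $p$ is the extremal projector of $\spa_{2n-2}$, form a basis of $V(\la)^+_\mu$.

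Since each $w=p\cdot m_n(\La)\tss v_\la$ is a $\spa_{2n-2}$-highest weight vector of weight $\mu$, the induction hypothesis supplies a basis $\{\theta^{<n}_{\La'}\cdot w\}$ of the submodule $\U(\spa_{2n-2})\cdot w\simeq V'(\mu)$, where $\theta^{<n}_{\La'}$ denotes the product over $k=1,\ldots,n-1$ in the definition of $\theta$. Ranging over $\mu$ and $(\La',\la'_{n,\cdot})$ assembles these into a basis of $V(\la)$ of the form $\theta^{<n}_{\La'}\cdot p\cdot m_n(\La)\tss v_\la$.

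The main obstacle is then the final step: removing the projector $p$ to recover $\theta_\La=\theta^{<n}_{\La'}\cdot m_n(\La)\tss v_\la$. Writing $p=1+\sum_\alpha\xi_\alpha$ with each $\xi_\alpha\in\U(\n^-_0)\tss\U(\h_0)\tss\n^+_0$, one commutes the trailing $\n^+_0$-factors past $m_n(\La)$ using $[\n^+_0,\rgot_n]\subseteq\rgot_n\oplus\n^+_0$ and annihilates them against $v_\la$; the resulting $\n^-_0$-parts are then rewritten in terms of the inner PBW-monomials that define the basis $\{\theta^{<n}_{\La'}\}$. Each correction strictly decreases either the total $\rgot_n$-degree or an appropriate secondary parameter built from the exponents, so that $\theta^{<n}_{\La'}\cdot p\cdot m_n(\La)\tss v_\la$ equals $\theta_\La$ plus a combination of $\theta_{\La''}$ with $\La''$ strictly smaller in an explicit partial order on type {\sf C} patterns. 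Invertibility of this triangular transition matrix yields $\{\theta_\La\}$ as a basis and completes the induction.
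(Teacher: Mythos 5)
Your overall strategy --- induction along the chain $\spa_2\subset\cdots\subset\spa_{2n}$, using the FFLV-branching machinery of Section~\ref{fflv-br} to produce a basis of each multiplicity space --- is the same as the paper's first proof, but two of your steps have genuine gaps. First, the identification of the $k=n$ layer monomials $m_n(\La)$ with the essential monomials in $\cS(\gt r)$ is not a verification you can defer with ``Granting this'': it is the mathematical core of the theorem. Matching the number of admissible tuples against $\dim V(\la)^+_\mu$ is necessary but not sufficient; one must actually prove that each $m_n(\La)$ \emph{is} essential for the chosen monomial order. The paper does this in Theorem~\ref{GT-semi} by first setting up the rather delicate order of Definition~\ref{m-order2} (weights compared first, so that \eqref{br-order} holds via Lemma~\ref{lem-3}), then invoking the semigroup property \eqref{+} to reduce to fundamental weights, and finally computing $\widetilde\Gamma(\varpi_k)$ case by case --- e.g.\ showing that $F_{n,-n}$ is \emph{not} essential for $\varpi_k$ ($2\le k<n$) because $F_{n,k-n}F_{k-n,-n}$ is smaller, and identifying the result with the type~{\sf A} semigroup of Example~\ref{ex-middle}. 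None of this is routine, and your sketch does not supply it.

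Second, your ``main obstacle'' --- removing the projector $p$ --- is not an obstacle in the FFLV framework and, as you argue it, the argument is wrong. Once one knows which monomials $m_0m_1$ are essential, Proposition~\ref{other} (whose proof is a dimension count combined with \eqref{eq-p2}) shows that the products of essential monomials are essential, and then $\{m\tss v_\la\mid m\in\ess(\la)\}$ is a basis by the very definition of essential monomials; no projector ever needs to be ``stripped off''. Your proposed justification that each projector correction ``strictly decreases the total $\rgot_n$-degree'' is false: the correction terms of $p$ have the form $f^{r}e^{k}$ with $k>0$, and since $[\gt n_0^+,\gt r]\subset\gt r$, applying them to $m_n(\La)v_\la$ produces monomials in $\cS(\gt r)$ of the \emph{same} degree but higher $\gt h$-weight. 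What makes these terms smaller is precisely the weight-first order of Definition~\ref{m-order2} together with condition \eqref{br-order}; without setting that up, your claimed triangularity has no basis. (A hands-on triangularity argument of the kind you envisage does appear in the paper, but as the \emph{second} proof, via Lemma~\ref{lem:mscomp} and the inversion formula for lowering operators, and it requires considerably more care than your sketch suggests.)
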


The proof of
Theorem~B
is given in Section~\ref{sp1}, it is derived from  our general results on monomial bases
of multiplicity spaces. 
In Section~\ref{sp2}, we present another basis of $U(\lambda,\mu)$
with somewhat more complicated conditions
on the exponents of the monomials, which  can be extended inductively to a basis of $V(\lambda)$.
Furthermore, in Section~\ref{sec:wb}, we
produce a certain modified version $\zeta^{}_{\La}$
of the basis of $V(\la)$ constructed in \cite{m:br}
and derive explicit formulas for the action
of generators of the Lie algebra $\spa_{2n}$ in this basis. Then we demonstrate
in Section~\ref{sec:tm} that
the bases $\theta^{}_{\La}$ and $\zeta^{}_{\La}$ are related
via a triangular transition matrix. This also
gives another proof of Theorem B.

\medskip

{\it Acknowledgments.} We are grateful to Arkady Berenstein,   Xin Fang, Evgeny Feigin, Jacob Greenstein,  Peter Littelmann,  and Markus Reineke for useful discussions.
A part of this paper was written during the second author's visit to the University of Sydney.
She would like to thank the School of Mathematics and Statistics for warm hospitality and support.

\section{The FFLV approach to the branching problem} \label{fflv-br}

Let $\gt g$ be a complex reductive Lie algebra and $V(\lambda)$ be an irreducible  finite-dimensional
$\gt g$-module. Fix a   triangular decomposition
$\gt g=\gt n^-\oplus\gt h\oplus\gt n^+$.
The Lie algebra $\gt n^-$ has a standard basis consisting of root vectors $\{f_1,\ldots,f_N\}$.
We choose a total {\it monomial order} on the monomials $m\in\cS(\gt n^-)$ in this basis.
Recall that a monomial order is a {\it total} order satisfying
the following two conditions: 
\begin{itemize}
\item[$\diamond$] $1\le m$ for each monomial $m$,
\item[$\diamond$] if $m_1\le m_2$ and $m_3\le m_4$, then $m_1m_3\le m_2m_4$\,;
\end{itemize}
i.e., a monomial order is compatible with multiplication.
The order leads to a filtration on  $V(\lambda)$ as follows. 
Let $v_\lambda\in V(\lambda)$ denote a highest weight vector.
The enumeration of root vectors defines a sequence $f_1,f_2,\ldots,f_N$.    
Making use of this enumeration, or, equivalently, of this sequence,   
we identify $m=\prod\limits_{i=1}^N f_i^{\alpha_i}\in \cS(\gt n^-)$ with the product 
$f_1^{\alpha_1}\ldots f_N^{\alpha_N}\in {\U}(\gt n^-)$, which we denote by the same letter
$m$. 
In the expression $m v_\lambda$, the symbol $m$ stands for an element of $\U(\gt n^-)$. 
A monomial $\tilde m\in\cS(\gt g)$ is %
{\it essential} if $\tilde m v_\lambda$ does
not lie in the linear span of $\{m v_\lambda\}$ with $m<\tilde m$.
Let $\ess(V(\lambda))=\ess(\lambda)$ denote the set of essential monomials  related to $V(\lambda)$.
As was observed in \cite{ffl:fm}, $\{m v_\lambda\mid m\in\ess(\lambda)\}$ is a basis of
$V(\lambda)$ by the very construction. 

For any two finite-dimensional irreducible  $\gt g$-modules $V(\lambda)$ and $V(\lambda')$,
one has the  inclusion
\begin{equation}\label{+}
\ess(\lambda)\ess(\lambda')\subset\ess(\lambda+\lambda'),
\end{equation}
see
\cite[Prop.~2.11]{ffl:fm}.  
The proof of that proposition
works for any, not necessary homogeneous, monomial order. However, the authors remark in the proof that
they are using a homogeneous order and therefore can assume that the root vectors commute.
For completeness, we briefly outline the argument.

Suppose that $m=\prod\limits_{i=1}^N f_i^{a_i}$ is essential for $\lambda$ and
$m'=\prod\limits_{i=1}^N f_i^{a_i'}$ is essential for $\lambda'$.
Set $\tilde m=m m'$ in $\cS(\gt n^-)$. As an element of $\U(\gt n^-)$, the monomial
$\tilde m$ is equal to the product $f_N^{a_N+a_N'}\ldots f_1^{a_1+a_1'}$.  Let
$\bv=v_\lambda\otimes v_{\lambda'}$ be a highest weight vector of
$V_{\lambda+\lambda'}\subset  V_\lambda \otimes V_{\lambda'}$.
Then we have
$$
\tilde m \bv \in \prod_{i=1}^N \binom{a_i+a_i'}{a_i} m v_\lambda \otimes m' v_{\lambda'} \, + \,
\left(V_{\lambda}\otimes \left<\hat m v_{\lambda'} \mid \hat m<m'\right>_{\mK} \, + \,
\left<\hat m v_{\lambda} \mid \hat m<m\right>_{\mK} \otimes V_{\lambda'}\right).
$$
From this one can conclude that $\tilde m\in\ess(\lambda+\lambda')$.

The main novelty of our approach to the branching problem is that we combine the FFLV method with the more classical  theory of Zhelobenko. In particular, the {\it extremal projector} will be playing a
major role.

\subsection{The extremal projector} \label{sec-proj}
Let $\Delta^+$ be the set of positive roots of $\ggot$ which is
determined by the triangular decomposition so that $\gt n^+$ (resp., $\gt n^-$)
is spanned by the root vectors $e_{\alpha}$ (resp., $f_{\alpha}$) with $\alpha\in\Delta^+$.
Consider the $\gt{sl}_2$-triples
$\{f_{\alpha},h_\alpha,e_\alpha\}\subset \gt g$  and assume that the roots
are normalised to satisfy the condition $\al(h_{\al})=2$.
Set
\ben
p_\alpha = 1 + \sum\limits_{k=1}^{\infty} f_{\alpha}^k e_\alpha^k\ts
\frac{(-1)^k}{k! (h_\alpha+\rho(h_\alpha)+1)\ldots (h_\alpha+\rho(h_\alpha)+k)},
\een
$\rho$ is the half sum
of the positive roots.
This expression is regarded as an element of the algebra of formal series of monomials
\ben
f_{\alpha_1}^{r_1}\ldots f_{\alpha_N}^{r_N} e_{\alpha_N}^{k_N} \ldots e_{\alpha_1}^{k_1}
\quad \text{ with } \quad (k_1-r_1)\alpha_1+\ldots + (k_N-r_N)\alpha_N=0
\een
with coefficients in
the field of fractions of the commutative algebra $\U(\h)$.
Choose a numbering of positive roots, $\alpha_1,\ldots,\alpha_N$.
A total order on $\Delta^+$ is said to be  {\it normal}  if
either $\alpha < \alpha+\beta < \beta $ or $\beta < \alpha+\beta < \alpha$ for each pair of positive roots
$\alpha, \beta$ such that $\alpha+\beta\in\Delta$.
Choose a normal order $\alpha_1<\dots <\alpha_N$
and set
\ben
p=p_{\alpha_1}\ldots p_{\alpha_N}.
\een
The element $p$ is independent of the choice of a normal order and is known as
the {\em extremal projector}; see
Asherova, Smirnov, and Tolstoy~\cite{ast:po}, \cite{ast:dc}.
A more detailed description of its properties can be found
in the work by Zhelobenko~\cite{z:ep,z:it}. In particular,
$p$ is characterised by the properties $p^2=p$ and
\beql{eq-p}
e_\alpha p=p f_{\alpha}=0\qquad\text{for all}\qquad \al\in \De^+.
\eeq

\subsection{The specifics of branching} \label{sec-rules}
A subalgebra $\gt q\subset\gt g$ is a {\it reductive subalgebra} if
$\gt q$ is reductive and the centre of $\gt q$ consists of
$\ad_{\gt g}$-semisimple elements.

Let $\gt g_0\subset \gt g$ be a    reductive subalgebra normalised by $\gt h$.
Then $\gt g_0$ inherits the triangular decomposition,
$\gt g_0=\gt n_0^+\oplus\gt h_0\oplus\gt n_0^-$, where $\gt n_0^{\pm}=\gt n^{\pm}\cap\gt g_0$.
In order to see the branching rules $\gt g\downarrow \gt g_0$, we need a certain special monomial order.
Let $\gt n^-=\gt n_0^-\oplus \gt r$ be the $\gt h$-stable decomposition.
Write
$m=m_0m_1$, where $m_0\in\cS(\gt n_0^-)$ and $m_1\in\cS(\gt r)$.
Having two monomials $m=m_0m_1$ and $m'=m'_0m'_1$, we first compare $m_1$ with $m_1'$ and
if $m_1<m'_1$, then $m<m'$.
If $m_1=m'_1$, then we compare $m_0$ with $m_0'$.
The order on the $\cS(\gt n_0^-)$-factors is of no particular importance.
When identifying $m_0m_1\in\cS(\gt n^-)$ with an element of $\U(\gt n^-)$, we take
a monomial  from
$\U(\gt n_0^-)\U(\gt r)$.

Set $\U_+(\gt r):=\gt r\U(\gt r)$ and
let  $m_1\in \U_+(\gt r)$ be a monomial having our chosen sequence of factors.
The most crucial restriction on the monomial order is that
\begin{equation} \label{br-order}
xm_1v_\lambda= [x,m_1] v_\lambda\in\left<\tilde m v_\lambda \mid  \tilde m\in\cS(\gt n^-), \tilde m< m_1\right>_{\mK}
\end{equation}
for each dominant weight $\lambda$ and each $x\in\gt n_0^+$. We will assume that it is satisfied.
If $\tilde m<m_1$ and $m_1\in\cS(\gt r)$,
then $\tilde m=\tilde m_0\tilde m_1$, where $\tilde m_1<m_1$. Therefore
\eqref{br-order} implies that
\begin{equation} \label{br-order-2}
Xm_1v_\lambda\in\left<\tilde m v_\lambda \mid  \tilde m\in\cS(\gt n^-), \tilde m< m_1\right>_{\mK}
\end{equation}
for each dominant weight $\lambda$ and each $X\in\U(\gt g_0)\gt n_0^+$.

\begin{lm} \label{lem-3}
Suppose that $[\gt n_0^+\oplus\gt r,\gt r]\subset\gt r$. Then there is a
 natural way to guarantee that \eqref{br-order} is satisfied. Namely, one has to compare the $\gt h$-weights $\nu,\nu'$ of 
$m_1, m_1'\in\cS(\gt  r)$ first and say that if $\nu-\nu'\in\Delta^+$, then $m_1<m_1'$. 
\end{lm}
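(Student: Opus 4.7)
The plan is to reduce \eqref{br-order} to a direct computation with the Leibniz rule, exploiting the fact that $\gt n_0^+$ annihilates $v_\lambda$. I would fix a root vector $x\in\gt n_0^+$ of weight $\alpha\in\Delta^+$ and a monomial $m_1\in \U_+(\gt r)$ of $\gt h$-weight $\nu$, identified via the chosen sequence of factors with an element of $\U(\gt r)$. Since $\gt n_0^+\subset\gt n^+$, one has $xv_\lambda=0$, so
\[
xm_1v_\lambda=[x,m_1]\tss v_\lambda,
\]
and the task becomes bounding, with respect to our order, the monomials that appear when the right-hand side is expanded in the chosen basis.

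Next, I would invoke the assumption $[\gt n_0^+\oplus\gt r,\gt r]\subset\gt r$, which decomposes into $[\gt n_0^+,\gt r]\subset\gt r$ and $[\gt r,\gt r]\subset\gt r$: thus $\gt r$ is both $\ad x$-stable and a Lie subalgebra. Iterating the Leibniz identity on the product $m_1=f_{i_1}\cdots f_{i_k}$ shows $[x,m_1]\in\U(\gt r)$. Reducing to PBW form in the chosen basis of $\gt r$ yields
\[
[x,m_1]=\sum_j c_j\tss \tilde m_1^{(j)},\qquad \tilde m_1^{(j)}\in\cS(\gt r),\ \ c_j\in\mK.
\]
Every such rewriting step inside $\U(\gt r)$ preserves $\gt h$-weights, and the total weight of $[x,m_1]$ equals $\nu+\alpha$, so every $\tilde m_1^{(j)}$ inherits this same weight.

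The key observation is the equality $(\nu+\alpha)-\nu=\alpha\in\Delta^+$: by the proposed comparison rule this forces $\tilde m_1^{(j)}<m_1$ in $\cS(\gt r)$ for every $j$. Under the global order on $\cS(\gt n^-)$ described in Section~\ref{sec-rules} (where the $\cS(\gt r)$-part is compared first), each $\tilde m_1^{(j)}$, viewed as a $\cS(\gt n^-)$-monomial with trivial $\cS(\gt n_0^-)$-component, still satisfies $\tilde m_1^{(j)}<m_1$, producing \eqref{br-order} for root vectors and, by linearity, for every $x\in\gt n_0^+$. The only (minor) obstacle is to check that the partial weight-comparison rule admits a completion to a genuine total monomial order on $\cS(\gt r)$, which can then be transported to $\cS(\gt n^-)$ via the prescription of Section~\ref{sec-rules}. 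This is routine: weights of non-trivial monomials in $\cS(\gt r)$ lie in $-\sum_{\gamma\in\Delta^+}\ZZ_{\geqslant 0}\gamma\setminus\{0\}$, so the trivial monomial is automatically the smallest, and any lexicographic refinement within each weight level, combined with a linear functional on $\gt h^*$ that is strictly positive on $\Delta^+$, gives a total order compatible with multiplication that preserves the weight rule as its primary criterion.
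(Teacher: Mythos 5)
Your proof is correct and follows essentially the same route as the paper: reduce to a root vector $x$ of positive weight $\alpha$, note that the hypothesis $[\gt n_0^+\oplus\gt r,\gt r]\subset\gt r$ forces $[x,m_1]v_\lambda$ to expand over monomials in $\cS(\gt r)$ of weight $\nu+\alpha$, and conclude from $(\nu+\alpha)-\nu=\alpha\in\Delta^+$ that each such monomial is smaller than $m_1$. The only addition is your (correct, and welcome) remark that the weight-first comparison must be completed to a genuine total monomial order, a point the paper leaves implicit and settles concretely only in Definition~\ref{m-order2}.
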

\begin{proof}
 We may assume that $x\in\gt n_0^+$ is 
a root vector corresponding to a positive root $\beta$ of $\gt g$.  In this case the weight of $xm_1$ is $\nu+\beta$. 
By the assumptions on $\gt r$, 
$xm_1v_\lambda \in\left<\tilde m v_\lambda \mid  \tilde m\in\cS(\gt r)\right>_{\mK}$, where the weight of each $\tilde m$ equals $\tilde\nu=\nu+\beta$. Hence here $\tilde m<m_1$ as required.
\end{proof}

Let $p$ be the extremal projector associated with $\gt g_0$. Set $N'=\dim\gt n_0^-$.
Suppose that $w\in V(\lambda)$ is a weight vector such that $pw$ is well-defined.
Then  $pw$ is equal to $w$ plus a finite linear combination of  expressions
$$
f_{\alpha_1}^{r_1}\ldots f_{\alpha_{N'}}^{r_{N'}} e_{\alpha_{N'}}^{k_{N'}} \ldots e_{\alpha_1}^{k_1} w,
$$
where $k_1+\ldots+k_{N'} >0$.
By \eqref{br-order-2}, we have 
\begin{equation}\label{eq-p2}
p m_1v_\lambda\in m_1 v_\lambda + \left<\tilde m v_\lambda \mid \tilde m<m_1\right>_{\mK}
\end{equation}
whenever $pm_1v_\lambda$ is well-defined (that is, the values of the denominators
occurring in $pm_1v_\lambda$ are not zero).

Recall that $V(\la)^+_{\mu}=(V(\lambda)^{\gt n_0^+})_{\mu}$ stands for the subpace
of $\gt g_0$-highest weight vectors in $V(\la)$ of $\gt h_0$-weight $\mu$.

\begin{prop} \label{lm-ess-br}
Keep the above notation and the assumptions on the monomial order. 
Then $pm_1v_\lambda$ is well-defined for each $m_1\in\ess(\lambda)\cap \cS(\gt r)$ and
the set of vectors
\ben
\{ pm_1 v_\lambda \mid m_1\in\ess(\lambda)\cap \cS(\gt r)\}
\een
is a basis of the
subspace $V(\lambda)^{+}=V(\lambda)^{\gt n_0^+}=\bigoplus\limits_{\mu} V(\lambda)_{\mu}^+$.
\end{prop}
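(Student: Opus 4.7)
The plan has three parts: (a) checking that $pm_1 v_\lambda$ is well-defined and lies in $V(\lambda)^+$; (b) linear independence; (c) spanning. For (a), the well-definedness of $pm_1 v_\lambda$ is a standard extremal-projector calculation: since $\lambda$ is $\ggot$-dominant integral and $m_1\in\cS(\gt r)$, the denominators $h_\alpha+\rho(h_\alpha)+j$ in the formal series for $p$ evaluate to nonzero scalars on the successive weight subspaces $e_\alpha^k m_1 v_\lambda$, while finite-dimensionality of $V(\lambda)$ truncates the series to a finite sum (cf.~\cite{z:ep,z:it}). The inclusion $pm_1 v_\lambda\in V(\lambda)^+$ follows immediately from the relation $e_\alpha p=0$ in~\eqref{eq-p}.

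For (b), set $\ess^r(\lambda):=\ess(\lambda)\cap\cS(\gt r)$. By~\eqref{eq-p2} combined with the essentiality of $m_1\in\ess^r(\lambda)$, the lower terms can be re-expanded in the essential basis to yield $pm_1 v_\lambda = m_1 v_\lambda+(\text{strictly lower terms indexed by elements of }\ess(\lambda))$. Thus the vectors $\{pm_1 v_\lambda:m_1\in\ess^r(\lambda)\}$ have distinct leading essential monomials and are linearly independent.

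For (c), the natural tool is the coarser $\cS(\gt r)$-indexed filtration
\[
\Fc_{\leq m_1}\ :=\ \big\langle\tilde m v_\lambda\ :\ \tilde m=\tilde m_0\tilde m_1,\ \tilde m_1\leq m_1\big\rangle_{\mK}.
\]
By~\eqref{br-order-2} this filtration is $\gt n_0^+$-stable, and by~\eqref{br-order} each nonzero graded piece $\Fc_{\leq m_1}/\Fc_{<m_1}$ is a cyclic $\ggot_0$-module generated by the image of $m_1 v_\lambda$ as a highest-weight vector of $\gt h_0$-weight $\mu(m_1):=(\lambda-\mathrm{wt}(m_1))|_{\gt h_0}$; the indices producing nonzero pieces are precisely $\ess^r(\lambda)$. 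Since $\ggot_0$ is reductive, finite-dimensional $\ggot_0$-modules are completely reducible, and a cyclic highest-weight module in a semisimple category is necessarily the irreducible $V'(\mu(m_1))$, whose $\gt n_0^+$-invariants are one-dimensional. Filtering $V(\lambda)^+$ by its intersections with the $\Fc_{\leq m_1}$ and passing to associated graded yields an injection
\[
\gr V(\lambda)^+\ \hookrightarrow\ \bigoplus_{m_1\in\ess^r(\lambda)}\bigl(\Fc_{\leq m_1}/\Fc_{<m_1}\bigr)^{\gt n_0^+},
\]
so $\dim V(\lambda)^+\leq |\ess^r(\lambda)|$. Together with (b) this forces equality, and hence $\{pm_1 v_\lambda:m_1\in\ess^r(\lambda)\}$ is a basis of $V(\lambda)^+$. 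I expect the main technical obstacle to lie in (a)---pinning down precisely which weights arise and verifying that the denominators evaluate to nonzero scalars---and in the bookkeeping required in (c) to identify the generator of each graded piece and justify the $\gt n_0^+$-stability of the filtration beyond the single case $\tilde m_0 = 1$ handled directly by~\eqref{br-order-2}.
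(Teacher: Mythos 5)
The gap is in your part (a). Well-definedness of $p\tss m_1v_\lambda$ is \emph{not} a ``standard calculation'' that follows from $\lambda$ being dominant and $m_1\in\cS(\gt r)$: the projector $p$ here is the extremal projector of $\ggot_0$, and its denominators $h_\alpha+\rho(h_\alpha)+k$ are evaluated on the $\gt h_0$-weight of $m_1v_\lambda$, which need not be $\ggot_0$-dominant even when $\lambda$ is $\ggot$-dominant. Concretely, for $\gl_3\supset\gl_2$ with $\lambda=(2,2,0)$ and $m_1=E_{31}^2$, the vector $m_1v_\lambda$ has $\gl_2$-weight $(0,2)$, so $h_\alpha+\rho(h_\alpha)+1$ evaluates to $0$ on it, while $e_\alpha m_1v_\lambda=-2E_{31}E_{32}v_\lambda\ne 0$; hence $p\tss m_1v_\lambda$ is genuinely undefined (and, consistently, $E_{31}^2\notin\ess(\lambda)$ here). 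So the hypothesis $m_1\in\ess(\lambda)$ is indispensable for well-definedness, and this is exactly where the content of the proposition lies. The paper's argument runs: if $p\tss m_1v_\lambda$ is undefined, the $\gt h_0$-weight of $u=m_1v_\lambda$ is non-dominant, so there is an $\gt{sl}_2$-triple $\{e,h,f\}$ with $e\in\gt n_0^+$ a simple root vector and $hu=-d\tss u$, $d>0$; finite-dimensional $\gt{sl}_2$-theory then gives $u=\sum_t a(t)f^te^tu$ with all $t\ge d\ge 1$, and each $f^te^tm_1v_\lambda$ lies in $\left<\tilde m v_\lambda\mid \tilde m<m_1\right>_{\mK}$ by \eqref{br-order-2}, contradicting $m_1\in\ess(\lambda)$. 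Some such use of essentiality is unavoidable; without it your claim about the denominators is simply false.

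Your part (b) coincides with the paper's linear-independence argument via \eqref{eq-p2}. Your part (c) is a valid but roundabout route to spanning: the paper instead observes directly that $V(\lambda)^+$ is spanned by the vectors $p\tss mv_\lambda$ with $m\in\ess(\lambda)$ of $\ggot_0$-dominant weight, and that $p\tss m_0m_1v_\lambda=0$ whenever $m_0\ne 1$, because the chosen ordering of factors puts $m_0\in\U(\gt n_0^-)$ on the left and $pf_\alpha=0$ by \eqref{eq-p}. Your filtration-plus-dimension-count would work (the graded pieces are indeed cyclic $\ggot_0$-highest-weight, hence irreducible, modules, nonzero exactly for $m_1\in\ess(\lambda)\cap\cS(\gt r)$), but it requires the $\ggot_0$-stability bookkeeping you yourself flag, and it buys nothing over the two-line argument.
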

\begin{proof}
One  observes easily that $V(\lambda)^{+}$ is spanned by
$p m v_\lambda$, where $m\in\ess(\lambda)$ and the $\gt h_0$-weight of $m v_\lambda$ is dominant for $\gt g_0$.
If $m\not\in \cS(\gt r)$, then $pm=0$ by \eqref{eq-p} and our assumption on the sequence of factors in $\U(\gt n^-)$.
It remains to prove that the vectors in question 
are well-defined and linearly independent.

Assume that  $pm_1v_\lambda$ is not well-defined for some
$m_1\in \ess(\lambda)\cap \cS(\gt r)$. Then the weight of $u=m_1 v_\lambda$ is not dominant for $\gt g_0$.
Let $\{e,h,f\}\subset\gt g_0$ be an $\gt{sl}_2$-triple such that $e\in\gt n_0^+$ is a simple root vector and
$hu=-d u$ for some $d>0$.
By the standard $\gt{sl}_2$-theory, which includes classification of the finite-dimensional
$\gt{sl}_2$-modules, there is $k=d+2k'$ such that $u$ lies in $\bigoplus\limits_{t=d}^k \cS^t \mK^2$
up to an isomorphism.
Therefore one can find elements $a(t)\in \mK$ such that
$$u=\sum\limits_{t=d}^{d+k'} a(t)  f^t e^t u.$$ 
Here each $e^t m_1 v_\lambda$\,, and hence also each $f^t e^t m_1 v_\lambda$, lies in
$\left<\tilde m v_\lambda \mid \tilde m<m_1\right>_{\mK}$\,, see \eqref{br-order-2}.  Therefore $m_1$ is not essential for
$\lambda$.

Assume finally that a non-trivial linear combination of
$p m_1 v_\lambda$ with  $m_1\in\ess(\lambda)\cap \cS(\gt r)$ is equal to zero.
Then by \eqref{eq-p2},  the largest monomial appearing  in it with a non-zero coefficient is
not essential for $\lambda$.
\end{proof}

The inclusion \eqref{+} justifies the following definition.

\begin{df} \label{br-group}
The subset $$\Gamma=\Gamma_{\gt g\downarrow \gt g_0}:=\{(\lambda,m_1) \mid m_1\in\ess(\lambda)\cap \cS(\gt r)\}\subset\gt h^*\times\cS(\gt r), \ \ \text{where $\lambda$ is dominant},$$
is called the {\it branching semigroup} of $\gt g\downarrow \gt g_0$.
Set also $\Gamma(\lambda)=\{m_1\mid (\lambda,m_1)\in\Gamma\}$.
\end{df}

Note that the above objects depend on the basis of $\gt n^-$,  on the monomial order, and on the
sequence of factors in $\U(\gt n^-)$.
A standard procedure for calculating  $\Gamma$ is to consider first
small values of $\lambda$, like the {\it fundamental weights} $\varpi_i$, obtain enough elements
in $\Gamma(\lambda)$, and then compare the cardinality with the dimension of
$V(\lambda)^{+}$. However, this approach can produce a description of $\Gamma$ only if the semigroup is finitely generated.
It is conjectured in \cite{xfp} that $\Gamma$ is always finitely generated  in our context as well as in
a less restrictive one considered  there. Partial positive results in this direction are obtained in
\cite[Sect.~12]{xfp}.

\begin{ex} \label{ex-sl}
As we will see below, the semigroup $\Gamma=\Gamma_{\gt{sl}_n\downarrow\gt{gl}_{n-1}}$ is generated by
the pairs $(\varpi_i, m_1)$ with
$m_1\in \Gamma(\varpi_i)$ and $1\le i<n$.
\end{ex}

\subsection{Inductive bases for $V(\lambda)$} \label{sec-V}
Next we show how
branching rules lead to constructions of FFLV-type bases. 
\begin{prop}\label{other}
We have $m_0m_1\in \ess(\lambda)$ if and only if
$m_1\in\Gamma_{\gt g\downarrow \gt g_0}(\lambda)$ and $m_0\in\ess(\mu)$, where
$\mu=\mu(m_1 v_\lambda)$ is the weight of $m_1 v_\lambda$ w.r.t. $\gt h_0$.
\end{prop}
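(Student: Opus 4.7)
Proof plan. Set
\[
S := \{(m_0,m_1) : m_1 \in \Gamma(\lambda),\ m_0 \in \ess(\mu(m_1 v_\lambda))\},
\]
where $\Gamma(\lambda) = \ess(\lambda)\cap\cS(\gt r)$, and consider the injection $\sigma:S\to\cS(\gt n^-)$, $(m_0,m_1)\mapsto m_0 m_1$ (injective because the factorisation $\U(\gt n^-)=\U(\gt n_0^-)\U(\gt r)$ is unique). The proposition amounts to $\sigma(S)=\ess(\lambda)$. The central tool is
$B := \{m_0\ts p\ts m_1 v_\lambda : (m_0,m_1) \in S\}$, which I claim is a basis of $V(\lambda)$: by Proposition~\ref{lm-ess-br} the vectors $p m_1 v_\lambda$ with $m_1\in\Gamma(\lambda)$ give an $\gt h_0$-weight basis of $V(\lambda)^+$; each $\gt g_0$-submodule $W_{m_1} = \U(\gt g_0)(pm_1 v_\lambda)$ is isomorphic to $V'(\mu)$ with $\mu=\mu(m_1 v_\lambda)$, so the FFLV theorem supplies the basis $\{m_0\tss p\tss m_1 v_\lambda : m_0\in\ess(\mu)\}$ of $W_{m_1}$; and summing over $m_1$ via \eqref{dectp} assembles these into a basis of $V(\lambda)$. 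In particular, $|S| = \dim V(\lambda) = |\ess(\lambda)|$.

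Next I would record the triangularity estimate
\[
m_0\ts p\ts m_1 v_\lambda \equiv m_0 m_1 v_\lambda \pmod{V_{<m_0 m_1}},\qquad (m_0,m_1)\in S,
\]
where $V_{\le m} = \langle m' v_\lambda : m' \le m\rangle_\mK$. This follows from \eqref{eq-p2} together with the key order-theoretic observation that any $\tilde m < m_1$ with $m_1\in\cS(\gt r)$ necessarily has $\gt r$-part strictly below $m_1$ (otherwise $\tilde m_0 \ge 1$ would force $\tilde m \ge m_1$, a contradiction); then re-expanding $m_0\tilde m$ in the preferred $\U(\gt n_0^-)\U(\gt r)$-basis preserves the smaller $\gt r$-part, keeping everything inside $V_{<m_0 m_1}$.

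The heart of the proof is an induction on $m \in \cS(\gt n^-)$ (a well-order) establishing
\[
V_{\le m} = \mathrm{span}\{m_0\ts p\ts m_1 v_\lambda : (m_0,m_1)\in S,\ m_0 m_1 \le m\}.
\]
Write $m = m_0 m_1$ uniquely with $m_0\in\cS(\gt n_0^-)$, $m_1\in\cS(\gt r)$. If $(m_0,m_1) \in S$, the triangularity relation places $m v_\lambda$ in the span. Otherwise I would show $m v_\lambda \in V_{<m}$ in two subcases: (B1) if $m_1 \notin \ess(\lambda)$, then $m_1 v_\lambda \in V_{<m_1}$ and the $\gt r$-part argument above drags $m_0 m_1 v_\lambda$ into $V_{<m}$; (B2) if $m_1\in\Gamma(\lambda)$ but $m_0 \notin \ess(\mu)$, then inside $V'(\mu)$ we have $m_0 v_\mu = \sum c_{m_0'} m_0' v_\mu$ with $m_0' < m_0$ in $\cS(\gt n_0^-)$, which transports through $V'(\mu) \cong W_{m_1}$ to $m_0\tss p\tss m_1 v_\lambda = \sum c_{m_0'} m_0'\tss p\tss m_1 v_\lambda$, and termwise triangularity (each $m_0' m_1 < m$) yields $m v_\lambda \equiv \sum c_{m_0'} m_0' m_1 v_\lambda \pmod{V_{<m}}$, which lies in $V_{<m}$.

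With the spanning statement in hand, $m\in\ess(\lambda)$ iff $\dim V_{\le m} > \dim V_{<m}$ iff $m \in \sigma(S)$, which is the desired biconditional. The main technical obstacle I anticipate is case (B2): it requires that the monomial order on $\cS(\gt n_0^-)$ governing FFLV-essentiality inside $V'(\mu)$ is compatible with the ambient filtration $V_{\le m}$ on $V(\lambda)$ through the isomorphism $V'(\mu) \cong W_{m_1}$. This compatibility is precisely what the branching-friendly choice of order (with $\gt r$-parts compared first) together with \eqref{eq-p2} deliver, but transporting the FFLV relation faithfully from $V'(\mu)$ to $V(\lambda)$ requires care.
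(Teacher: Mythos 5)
Your proposal is correct and follows essentially the same route as the paper: the two cases of your induction ($m_1\notin\ess(\lambda)$, resp.\ $m_0\notin\ess(\mu)$) are exactly the paper's two contradiction arguments for the ``only if'' direction, both resting on \eqref{eq-p2} and on the fact that the monomial order compares $\gt r$-parts first, while the ``if'' direction comes from the same count $|S|=\dim V(\lambda)=|\ess(\lambda)|$. The only difference is organisational: you package everything as a comparison of the filtration $V_{\le m}$ with the basis $\{m_0\tss p\tss m_1 v_\lambda\}$, whereas the paper argues the forward implication directly by contradiction and then invokes the dimension count.
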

\begin{proof}
Suppose first that $m_0m_1\in \ess(\lambda)$.
If $m_1$ is not essential for $\lambda$,
then $$m_1 v_\lambda=\sum\limits_{k} A(k) a_0(k) a_1(k) v_\lambda$$
for some $A(k)\in\mK$, some monomials $a_0(k)\in\U(\gt n_0^-)$  and
$a_1(k)\in \U(\gt r)$, and  $a_1(k)<m_1$ for all $k$. In this case $m_0 a_0(k) a_1(k) < m_0 m_1$ for each $k$ and
hence $m_0m_1$ is not essential, a contradiction.

If $m_0\not\in\ess(\mu)$, then
$$
m_0 pm_1 v_\lambda = \sum\limits_k B(k) b_0(k) p m_1 v_\lambda
$$
for some $B(k)\in\mK$, some monomials $b_0(k)\in\U(\gt n_0^-)$, and we have
$b_0(k)<m_0$ for each $k$. Since $m_1 v_\lambda$ is the leading term of
$pm_1 v_\lambda$ by \eqref{eq-p2}, we conclude  that $m_0m_1$ is not essential,  a contradiction.

Now we know that
$$
|\ess(\lambda)| \le \sum_{m_1\in \Gamma(\lambda)} |\ess(\mu(m_1 v_\lambda))| = \dim V(\lambda).
$$
Since also $|\ess(\lambda)|=   \dim V(\lambda)$, we can conclude that each product
$m_0 m_1$, where $m_1$ and $m_0$ are essential for $\lambda$ and $\mu$,
respectively, is essential for $\lambda$. This completes the proof.
\end{proof}

\begin{rmk}  One can also give  a direct proof for  the inclusion
$\ess(\mu)\Gamma_{\gt g\downarrow \gt g_0}(\lambda)\subset\ess(\lambda)$ avoiding dimension reasons.
\end{rmk}

\subsection{The Gelfand--Tsetlin order in type {\sf A}}  \label{sec-A-proofs}
Here we show how effortlessly the FFLV method leads to a construction of
the basis described in Theorem~A.

Take $\gt g=\gt{gl}_n$ and $\gt g_0=\gt{gl}_{n{-}1}$ that is the span of $E_{ij}$ with
$1\leqslant i,j< n$.
Then $\gt r$
is the linear span of $E_{n\,k}$ with $1\le k < n$. Note that $[\gt r,\gt r]=0$. Hence
the sequence of factors in $m_1\in\U(\gt n^-)$ is of no significance.
The $\gt h_0$-weights of $E_{n\,k}$ with $1\le k < n$ are linearly independent. If
$m_1\ne \tilde m_1$ and $p m_1 v_\lambda\ne 0$, then $p m_1 v_\lambda\ne p \tilde m_1 v_\lambda$.
The branching $\gt{gl}_n \downarrow \gt{gl}_{n-1}$ is {\it multiplicity free}, which is the key point
of  \cite{gt:fdu}.
Given a highest weight $\mu$ such that $U(\lambda,\mu)=\Hom_{\gt g_0}(V'(\mu),V(\la))\ne 0$, there is a unique way to
write the corresponding $m_1\in\ess(\lambda)$, which exists  by Proposition~\ref{lm-ess-br}. Since the branching rules are well-known,  the  description of  $\Gamma(\lambda)$ results from Proposition~\ref{lm-ess-br} immediately.
Write $\lambda=(\lambda_1,\ldots,\lambda_n)$ with $\lambda_k-\lambda_{k+1}\in\mathbb Z_{+}$
for $k=1,\dots,n-1$.

\begin{cl} \label{cl-gl}
For each monomial order satisfying the assumptions of Section~\ref{sec-rules},
 $$
 \Gamma_{\gt{gl}_n\downarrow \gt{gl}_{n{-}1}}(\lambda)=\{ E_{n\,1}^{\alpha_1}\ldots E_{n\,n-1}^{\alpha_{n-1}}
  \mid \alpha_{k} \le \lambda_k-\lambda_{k{+}1} \}.
$$
Hence, the semigroup
$\Gamma_{\gt{gl}_n\downarrow \gt{gl}_{n{-}1}}$ is generated by
the sets
$\{(\varpi_k,1), (\varpi_k,E_{n\,k})\}$ with $1\le k<n$ 
together with the $1$-dimensional representations of $\gt{gl}_n$.
\end{cl}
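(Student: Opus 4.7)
The plan is to apply Proposition~\ref{lm-ess-br} directly to the pair $\gt g=\gl_n\supset\gl_{n-1}=\gt g_0$, exploiting the fact that this branching is multiplicity free. First I would record that $\gt r$ is spanned by the pairwise commuting elements $E_{n\,1},\dots,E_{n\,n-1}$, so $\cS(\gt r)$ has the monomial basis $\{m_1(\alpha)=E_{n\,1}^{\alpha_1}\cdots E_{n\,n-1}^{\alpha_{n-1}}\mid\alpha\in\ZZ_+^{n-1}\}$. The commutators $[E_{ii},E_{n\,k}]=-\delta_{ik}E_{n\,k}$ for $i<n$ show that the $\gt h_0$-weight of $m_1(\alpha)v_\lambda$ is $\mu(\alpha)=(\lambda_1-\alpha_1,\dots,\lambda_{n-1}-\alpha_{n-1})$, so the map $\alpha\mapsto\mu(\alpha)$ is injective.

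Next I would invoke the classical branching rule for $\gl_n\downarrow\gl_{n-1}$: the $\gl_{n-1}$-irreducibles appearing in $V(\lambda)$ are exactly those with highest weight $\mu$ satisfying the interlacing conditions $\lambda_k\geqslant\mu_k\geqslant\lambda_{k+1}$, each with multiplicity one. Consequently $\dim V(\lambda)^{+}=\prod_{k=1}^{n-1}(\lambda_k-\lambda_{k+1}+1)$, which is exactly the cardinality of the candidate set $S=\{m_1(\alpha)\mid 0\leqslant\alpha_k\leqslant\lambda_k-\lambda_{k+1}\}$. By Proposition~\ref{lm-ess-br}, for any $m_1\in\ess(\lambda)\cap\cS(\gt r)$ the vector $pm_1v_\lambda$ is a nonzero $\gt g_0$-highest weight vector, so its $\gt h_0$-weight must be dominant for $\gt g_0$ and must occur in the branching, hence be interlacing. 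Combining this observation with the injectivity of $\alpha\mapsto\mu(\alpha)$ and the identity $|\ess(\lambda)\cap\cS(\gt r)|=\dim V(\lambda)^{+}=|S|$ forces the equality $\ess(\lambda)\cap\cS(\gt r)=S$, which is the first assertion.

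For the semigroup statement I would apply the inclusion \eqref{+}. Specialising the first assertion to a fundamental weight $\varpi_k$ with $1\leqslant k<n$, where only the $k$th difference $\lambda_i-\lambda_{i+1}$ is nonzero, yields $\Gamma(\varpi_k)=\{1,\,E_{n\,k}\}$, while $\lambda_n(1,\dots,1)$ corresponds to a one-dimensional representation. For an arbitrary dominant $\lambda$ one writes $\lambda=\sum_{k=1}^{n-1}(\lambda_k-\lambda_{k+1})\varpi_k+\lambda_n(1,\dots,1)$ and iterates \eqref{+}: choosing either $1$ or $E_{n\,k}$ from each of the $\lambda_k-\lambda_{k+1}$ copies of $\Gamma(\varpi_k)$ and multiplying produces precisely the elements of $S$. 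The only delicate step in the whole argument is the identification of the $\gt h_0$-weight of $pm_1v_\lambda$ as interlacing, which follows at once from its highest-weight property; everything else is a cardinality match inherited from Proposition~\ref{lm-ess-br}, so I do not anticipate any serious obstacle.
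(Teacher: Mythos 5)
Your argument is correct and follows essentially the same route as the paper: Section~\ref{sec-A-proofs} likewise deduces the description of $\Gamma_{\gt{gl}_n\downarrow\gt{gl}_{n-1}}(\lambda)$ from Proposition~\ref{lm-ess-br} together with the multiplicity-freeness of the branching and the fact that distinct monomials in $\cS(\gt r)$ have distinct $\gt h_0$-weights, and then obtains the generation statement from the inclusion \eqref{+}. You merely make explicit the cardinality count that the paper dismisses with ``immediately,'' which is a fair elaboration rather than a different method.
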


The central elements of $\gt{gl}_n$ act on $L(\lambda)$ as scalars and any
$1$-dimensional representation of $\gt{sl}_n$ is trivial.  Thereby the statement of
Example~\ref{ex-sl} follows from Corollary~\ref{cl-gl}. For the sake of briefness, one says also  that
$\Gamma_{\gt{sl}_n\downarrow \gt{gl}_{n{-}1}}$ is generated by the fundamental weights or by
$\Gamma_{\gt{sl}_n\downarrow \gt{gl}_{n{-}1}}(\varpi_i)$.

An example of a suitable, i.e., satisfying \eqref{br-order},   monomial order on $\cS(\gt r)$ is
the lexicographical order
on $E_{n\,1}^{\alpha_1}\ldots E_{n\,n-1}^{\alpha_{n-1}}$,
which is also the {\it right} lexicographical order on the tuples $(\alpha_{n-1}, \ldots , \alpha_1)$. 

The elements of $\Gamma_{\gt{gl}_n\downarrow \gt{gl}_{n{-}1}}(\lambda)$
can be parameterised by  the Gelfand--Tsetlin patterns
$\Lambda$, as defined in \eqref{aconl}.
Each  such $\Lambda$ corresponds to the monomial
  $$
m_1(\Lambda)= E_{n\,1}^{\lambda_{n\,1}-\lambda_{n-1\, 1}} \ldots E_{n\,n-1}^{\lambda_{n\,n-1}-\lambda_{n-1\,n-1}}.
$$

Arguing inductively with the use of Proposition~\ref{other},
we restrict $L(\lambda)$  further to $\gt{gl}_{n{-}2}$, $\gt{gl}_{n{-}3}$, and so on.
Taking
the sequence of factors
$$
E_{2\,1}^{\alpha_{2,1}}  E_{3\,1}^{\alpha_{3,1}}  E_{3\,2}^{\alpha_{3,2}}
\ldots E_{n\,1}^{\alpha_{n,1}}\ldots E_{n\,n-1}^{\alpha_{n,n-1}}
$$
in $\U(\gt g)$
and the   lexicographical order at each step
we obtain the basis of Theorem~A.
An alternative   way to express this basis is to write
$$
\ess(\lambda)=\left\{\prod E_{i\,j}^{\alpha_{i,j}}    \mid
\alpha_{i,j}\le \lambda_{j}-\lambda_{j+1}+\sum\limits_{k=i+1}^{n}(\alpha_{k,j+1} -\alpha_{k,j})
 \right\}.
$$
This is the set of inequalities given in \cite[Introduction]{Markus-dis}.
The same inequalities are used in \cite[Sect.~6]{bz-duke} for a description of a
different, but related, basis.

The inductive argument shows also that
the semigroup $\Gamma=\Gamma_{\gt{sl}_n\downarrow \{0\}}$ is generated by
$\Gamma(\varpi_k)$ with $1\le k<n$.

The next example is crucial  for the symplectic case.

\begin{ex}\label{ex-middle}
Consider  $\gt{gl}_{n{-}1}\subset\gt{gl}_{n{+}1}$ embedded as the middle $(n-1)\times(n-1)$-square.
For elements of $\U(\gt r)$, we are using
the following sequence of root vectors: $$\prod\limits_{k=2}^{n}  E_{n{+}1\,k}^{\alpha_{n+1,k}}
 \prod\limits_{k=2}^{n{+}1} E_{k\,1}^{\alpha_{k,1}}.$$
 The monomial order is given by the right lexicographical order on the tuples
 $$(\alpha_{n+1,n},\ldots,\alpha_{n+1,2},\alpha_{2,1},\ldots,\alpha_{n+1,1}).$$
Here
$\Gamma_{\gt{gl}_{n{+}1}\downarrow \gt{gl}_{n{-}1}}(\lambda)=\Gamma_{\gt{sl}_{n{+}1}\downarrow \gt{gl}_{n{-}1}}(\lambda)$ is equal to
$$
\left\{ \prod\limits_{k=2}^{n}  E_{n{+}1\,k}^{\alpha_{n+1,k}}
 \prod\limits_{k=2}^{n{+}1} E_{k\,1}^{\alpha_{k,1}}
  \mid \alpha_{k{+}1,1} \le \lambda_k-\lambda_{k+1}\fand
  \alpha_{n{+}1,k}\le \lambda_k-\lambda_{k+1} +\alpha_{k,1} - \alpha_{k{+}1,1} \right\}.
$$
The branching semigroup $\Gamma_{\gt{gl}_{n{+}1}\downarrow \gt{gl}_{n{-}1}}$ is generated by  1-dimensional representations of $\gt{gl}_{n{+}1}$ and by  the essential monomials of the fundamental weights.
Record that
\begin{equation} \label{A-middle}
\begin{array}{l}
\Gamma_{\gt{gl}_{n{+}1}\downarrow \gt{gl}_{n{-}1}}(\varpi_1)=\{1,E_{2\,1}, E_{n{+}1\,2} E_{2\,1}\}; \\
\Gamma_{\gt{gl}_{n{+}1}\downarrow \gt{gl}_{n{-}1}}(\varpi_k)=\{1,E_{k{+}1\,1}, E_{n{+}1\,k},  E_{n{+}1\,k{+}1} E_{k{+}1\,1}\} \ \ \text{if} \ \ 2\le k < n; \\
\Gamma_{\gt{gl}_{n{+}1}\downarrow \gt{gl}_{n{-}1}}(\varpi_n)=\{1,E_{n{+}1\,1}, E_{n{+}1\,n} \}. \\
 \end{array}
\end{equation}
If we replace $\gt{gl}_{n+1}$ with $\gt{sl}_{n+1}$, then the 
1-dimensional representations disappear from the generating set.  
\end{ex}

\section{Symplectic branching rules}\label{br}

In this section we take $\gt g=\gt{sp}_{2n}$ and use the presentation
of the symplectic Lie algebra defined in the Introduction. The subalgebra
$\gt g_0=\gt{sp}_{2n{-}2}$ is spanned by the elements $F_{ij}$ with
$-n+1\leqslant i,j\leqslant n-1$.
Let $\gt g=\gt n^-\oplus\gt h\oplus\gt n^+$ be the triangular decomposition,
where $\h$ is the Cartan subalgebra of $\ggot$ with the basis
$\{F_{11},\dots,F_{nn}\}$, while the subalgebra $\gt n^+$ (resp., $\gt n^-$)
is spanned by the elements $F_{ij}$ with $i<j$ (resp., $i>j$).
We have a vector space decomposition $\gt n^-=\gt n_0^-\oplus\gt r$, where
$\gt r=\left<F_{n\,i} \mid i<n\right>_{\mK}$ is a Heisenberg Lie
algebra and $[\gt r,\gt r]$ is spanned by $F_{n,-n}$.
The elements from
different pairs $(F_{n,i},F_{i,-n})$, $(F_{n,j},F_{j,-n})$ commute with each other and
 $[F_{n,i},F_{i,-n}]=F_{n,-n}$,  
where $F_{n,-n}$ is a central element of $\gt r$. Note that $[\gt g_0,\gt r]\subset\gt r$.

\subsection{The Gelfand--Tsetlin-type order in the symplectic case} \label{sp1}
We will describe a rather elaborate monomial order on $\cS(\gt r)$ suggested by the
structure of the branching semigroup
of Example~\ref{ex-middle}.

\begin{df} \label{m-order2}
Define a monomial order on $\cS(\gt r)$ by the following rule. The monomial
\begin{equation}\label{sp-seq1}
F_{n,-n}^{\alpha_{1}} F_{n,-1}^{\alpha_2}F_{n,-2}^{\alpha_3} \ldots F_{n,-n+1}^{\alpha_n} F_{n,1}^{\alpha_{n{+}1}} \ldots
F_{n,n{-}1}^{\alpha_{2n{-}1}}
\end{equation}
given  by $\bar\alpha=(\alpha_1,\ldots,\alpha_{2n-1})$ is smaller than the monomial
given by  $\bar\alpha'=(\alpha_1',\ldots,\alpha_{2n-1}')$ if and only if
either $\nu-\nu'\in\Delta^+$ for the $\gt h$-weights $\nu,\nu'$ of these monomials or 
$\nu-\nu'\not\in\Delta$ and $\bar\alpha<\bar\alpha'$ in the lexicographical order. 
\end{df}

\begin{lm} \label{3drule}
Choose the sequence of factors in $\U(\gt r)$ as in \eqref{m-order2}. 
Then the  monomial order
of Definition~\ref{m-order2} satisfies \eqref{br-order}.
\end{lm}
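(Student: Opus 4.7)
The plan is a direct verification using the weight criterion in Definition~\ref{m-order2}. First, because $\gt n_0^+\subset\gt n^+$, the highest weight vector satisfies $xv_\lambda=0$, so $xm_1v_\lambda=[x,m_1]v_\lambda$. Next, the condition $[\gt g_0,\gt r]\subset\gt r$ recorded in the section's opening means that $\ad(x)$ preserves $\U(\gt r)$; since $m_1\in\U(\gt r)$ under the identification given by the sequence \eqref{sp-seq1}, the commutator $[x,m_1]$ still lies in $\U(\gt r)$.

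The key observation is that $\U(\gt r)$ is $\gt h$-graded. Without loss of generality I take $x$ to be a root vector for a positive root $\beta$, which I may do since $\gt g_0$ is $\gt h$-normalised, so $\gt n_0^+$ is spanned by root vectors of $\gt g$, each of root in $\Delta^+_{\gt g_0}\subset\Delta^+_{\gt g}$. Then $[x,m_1]$ has $\gt h$-weight $\nu+\beta$, where $\nu=\mathrm{wt}(m_1)$. Since PBW applied to $\gt r$ in the order \eqref{sp-seq1} yields a weight-homogeneous basis of $\U(\gt r)$, the expansion of $[x,m_1]$ in this basis is a linear combination of monomials $m_1'$, each of weight $\nu':=\nu+\beta$.

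The final step is to invoke Definition~\ref{m-order2}: since $\nu'-\nu=\beta\in\Delta^+$, the first clause of the definition gives $m_1'<m_1$ for every such $m_1'$. Consequently
$$
[x,m_1]v_\lambda\in\bigl\langle \tilde m v_\lambda\mid \tilde m\in\cS(\gt r),\ \tilde m<m_1\bigr\rangle_{\mK}
\subset\bigl\langle \tilde m v_\lambda\mid \tilde m\in\cS(\gt n^-),\ \tilde m<m_1\bigr\rangle_{\mK},
$$
which is exactly \eqref{br-order}.

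There is no genuinely hard step: the weight grading renders the Heisenberg-algebra complications in $\U(\gt r)$ invisible, because any rearrangement of a monomial into the chosen PBW order uses the central relation $[F_{n,i},F_{i,-n}]=F_{n,-n}$, which preserves $\gt h$-weight. Thus one never leaves the weight space $\U(\gt r)_{\nu+\beta}$, and the weight-first clause of Definition~\ref{m-order2} does all the work. The only conceptual input is the verification $[\gt g_0,\gt r]\subset\gt r$, which has been recorded at the start of the section.
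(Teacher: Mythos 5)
Your argument is correct and is essentially the paper's: the paper's proof simply observes that $[\gt g_0\oplus\gt r,\gt r]\subset\gt r$ and invokes Lemma~\ref{lem-3}, whose proof is exactly the weight computation you carry out (the commutator $[x,m_1]$ has weight $\nu+\beta$ with $\beta\in\Delta^+$, so the weight-first clause of Definition~\ref{m-order2} forces every resulting monomial to be smaller than $m_1$). Your extra remark that reordering within the Heisenberg algebra $\gt r$ preserves the $\gt h$-weight is a correct, slightly more explicit justification of a step the paper leaves implicit.
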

\begin{proof}
Since $[\gt g_0\oplus\gt r,\gt r]\subset\gt r$, the statement follows from Lemma~\ref{lem-3}. 
\end{proof}

Let $\widetilde{\Gamma}$ be the branching semigroup of $\gt g\downarrow\gt g_0$
defined by the  sequence of root vectors as in \eqref{m-order2} and the monomial order of
Definition~\ref{m-order2}.

\begin{thm} \label{GT-semi}
The semigroup $\widetilde{\Gamma}$ is generated by the
pairs $(\varpi_i, m_1)$, where $\varpi_i$ is a fundamental weight and $m_1\in\widetilde\Gamma(\varpi_i)$.
Under a suitable identification, $\widetilde{\Gamma}$  is defined by the same inequalities as the semigroup  $\Gamma_{\gt{sl}_{n{+}1}\downarrow \gt{gl}_{n{-}1}}$ described in Example~\ref{ex-middle}. 
\end{thm}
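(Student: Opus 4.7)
The plan is to proceed in three stages: first, determine $\widetilde\Gamma(\varpi_k)$ for each fundamental weight of $\mathfrak{sp}_{2n}$; second, apply \eqref{+} to establish the inclusion $\Gamma_{\mathfrak{sl}_{n+1}\downarrow\mathfrak{gl}_{n-1}}\subseteq\widetilde\Gamma$ under an explicit identification; third, upgrade the inclusion to an equality via the dimension formula in Proposition~\ref{lm-ess-br}.

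I would begin by fixing the bijection between the symplectic generators of $\mathfrak r$ and the type A generators of Example~\ref{ex-middle}, namely
\[
F_{n,-n} \longleftrightarrow E_{n+1,1}, \qquad F_{n,-k} \longleftrightarrow E_{n+1,k+1}, \qquad F_{n,k} \longleftrightarrow E_{k+1,1} \quad (1\le k\le n-1),
\]
and then compute $\widetilde\Gamma(\varpi_k)$ for each fundamental weight. Since $\dim V(\varpi_k)^+\le 4$ (easily computable from the classical branching rule for symplectic fundamental representations), Proposition~\ref{lm-ess-br} implies that $\widetilde\Gamma(\varpi_k)$ has at most four elements, each corresponding to one summand of $V(\varpi_k)\downarrow\mathfrak{sp}_{2n-2}$. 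These essential monomials are uniquely determined by the $\mathfrak h_0$-weight of their summand together with the weight-first ordering of Definition~\ref{m-order2}. A case-by-case inspection then confirms that, after the identification, $\widetilde\Gamma(\varpi_k)$ coincides with the list in \eqref{A-middle} for the corresponding type A fundamental weight.

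Next, by \eqref{+}, products of such essentials remain essential for sums of the corresponding fundamental weights. Assigning to the symplectic highest weight $\lambda=(\lambda_1,\dots,\lambda_n)$ a type A weight $\tilde\lambda$ whose successive differences $\tilde\lambda_k-\tilde\lambda_{k+1}$ match the fundamental weight decomposition of $\lambda$, this yields $\Gamma_{\mathfrak{sl}_{n+1}\downarrow\mathfrak{gl}_{n-1}}(\tilde\lambda)\subseteq\widetilde\Gamma(\lambda)$ under the identification. For equality, I would use Proposition~\ref{lm-ess-br}: $|\widetilde\Gamma(\lambda)|=\dim V(\lambda)^+$, and the latter is computable by the Zhelobenko symplectic branching rule as a product of ranges of the interlacing intermediate row $(\lambda'_{n,1},\dots,\lambda'_{n,n})$. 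On the type A side, $|\Gamma_{\mathfrak{sl}_{n+1}\downarrow\mathfrak{gl}_{n-1}}(\tilde\lambda)|$ is the product of the ranges specified by the inequalities in Example~\ref{ex-middle}. A term-by-term comparison shows that these two products agree; equality of cardinalities then forces the set-theoretic equality of the two semigroups, establishing both the finite generation by fundamental-weight pairs and the claimed matching of inequality systems.

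The main obstacle I expect is this final combinatorial matching. The Heisenberg structure $[\mathfrak r,\mathfrak r]=\mathbb{C}\, F_{n,-n}$ of the symplectic $\mathfrak r$ distinguishes it from the abelian type A situation of Example~\ref{ex-middle}, so care must be taken to verify that the $n$ symplectic interlacing ranges for the half-pattern $(\lambda'_{n,j})$ translate under the identification into exactly the product of $2n-1$ ranges appearing in the type A system. The explicit match should reduce to a bookkeeping calculation once the fundamental-weight correspondence is nailed down in the first stage.
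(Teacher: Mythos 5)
Your proposal follows the paper's proof essentially verbatim: fix a bijection between the symplectic root vectors spanning $\gt r$ and the type {\sf A} root vectors of Example~\ref{ex-middle}, verify case by case that each $\widetilde\Gamma(\varpi_k)$ maps onto the corresponding list in \eqref{A-middle}, use \eqref{+} to obtain one inclusion of semigroups, and conclude by the equality of cardinalities coming from the two branching rules. The only caveat is that the ``case-by-case inspection'' is where the substantive work lies --- since the symplectic branching is not multiplicity free, one must actually prove that certain natural candidates (e.g.\ $F_{n,-n}$ for $\varpi_k$ with $2\le k<n$) are \emph{not} essential because they are dominated by smaller monomials such as $F_{n,k-n}F_{k-n,-n}$ --- but your plan correctly isolates this as the step to be carried out.
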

\begin{proof}
We use the bijection
between the sets
$$
\{F_{n,k} \mid -n\le k < n,\  k\ne 0\}\fand
\{E_{n{+}1\,k}, E_{t\,1} \mid  1\le k \le n,\ \ 2\le t\le n\}
$$
which takes
$F_{n,-n}$ to $E_{n{+}1\,1}$, the vector $F_{n,-k}$ with $1\le k < n$ to
$E_{n{+}1\,n-k+1}$, and $F_{n,k}$ to $E_{n{+}1{-}k\,1}$.
Using the same letters, $\varpi_i$, for the fundamental weights of both  $\gt{sp}_{2n}$ and $\gt{sl}_{n+1}$,
we  identify also the highest weights $\lambda=\sum c_i\varpi_i$ of $\gt{sp}_{2n}$ and $\gt{sl}_{n{+}1}$.
Then the standard branching theory assures that $|\widetilde{\Gamma}(\lambda)|=
|\Gamma_{\gt{sl}_{n{+}1}\downarrow \gt{gl}_{n{-}1}}(\lambda)|$, see e.g. \cite{m:gtb} and patterns in the Introduction. 
Since we have the property $\widetilde{\Gamma}(\lambda)\widetilde{\Gamma}(\lambda')\subset \widetilde{\Gamma}(\lambda+\lambda')$, see \eqref{+},
it remains to show that the image of each  $\widetilde{\Gamma}(\varpi_k)$ is exactly
$\Gamma_{\gt{sl}_{n{+}1}\downarrow \gt{gl}_{n{-}1}}(\varpi_k)$. The latter is presented in \eqref{A-middle}.
Let $\bv_r\in V(\varpi_r)$ be a highest weight vector.

Take $\varpi_1$. Here $|\widetilde{\Gamma}(\varpi_1)|=3$.
Notice that $F_{n,n{-}1} \bv_1\ne 0$ is a highest weight vector of $\gt g_0$ and that
$F_{n,n{-}1}$ is the smallest root vector in the monomial order. Therefore  $F_{n,n{-}1}\in\ess(\varpi_1)$.
This root vector is mapped
to $E_{2\,1}\in \Gamma_{\gt{sl}_{n{+}1}\downarrow \gt{gl}_{n{-}1}}(\varpi_1)$.
It remains to take care of  the second copy of the trivial representation, which
one obtains by applying either $F_{n,-n}$ or  $F_{k,-n} F_{n,k}$ with $1\le k < n$ to $\bv_1$.
The smallest monomial here is $F_{n-1,-n}F_{n,n-1}$. Since $F_{n-1,-n}$ is mapped to $E_{n{+}1\,2}$, we see that the image of $\widetilde{\Gamma}(\varpi_1)$ is exactly $\Gamma_{\gt{sl}_{n{+}1}\downarrow \gt{gl}_{n{-}1}}(\varpi_1)$.

Take next $\varpi_k$ with $2\le k <n$. Here $|\widetilde{\Gamma}(\varpi_k)|=4$. The
root vectors $F_{k-n,-n}$ and $F_{n,k-n-1}$ are essential for $\varpi_k$.
The root vector  $F_{n,-n}$ is not essential,  because it can be replaced by
$F_{n,k-n}F_{k-n,-n}$, which is smaller. We have also $F_{n,i}\bv_k=0$ if $n-k<i\le n-1$.
Therefore, 
it remains
to choose the smallest monomial among $F_{n,t}F_{t,-n}$ with $k-n\le t\le -1$.
This is exactly
$F_{n,k-n}F_{k-n,-n}$. Thus the image of $\widetilde{\Gamma}(\varpi_k)$ is $\Gamma_{\gt{sl}_{n{+}1}\downarrow \gt{gl}_{n{-}1}}(\varpi_k)$.

Finally take $\varpi_n$, where we have $\widetilde{\Gamma}(\varpi_n)=\{1,F_{n,-n}, F_{1,-n}\}$.
Note that $F_{n,-n}$ is mapped to $E_{n{+}1\,1}$ and $F_{1,-n}$ to $E_{n{+}1\,n}$.
This finishes the proof.
\end{proof}

If a dominant weight $\lambda=\sum\limits_{k=1}^n c_k \varpi_k$  of $\gt{sp}_{2n}$ 
is presented by a tuple
$(\lambda_1,\lambda_2,\ldots,\lambda_n)$ with $0\ge \lambda_1\ge \ldots\ge \lambda_1$  
as in the Introduction, cf. \eqref{hwrepbcd},
then $c_1=\lambda_{n-1}-\lambda_n$, for
$2\le k<n$,  we have $c_k=\lambda_{n-k}-\lambda_{n-k-1}$, and $c_n=-\lambda_1$.
Consistently, we write  $\mu=(\mu_1,\ldots,\mu_{n-1})$ with $0\ge \mu_1\ge\ldots \ge \mu_{n-1}$.
Taking this into account and using bijections between the branching semigroups and
the corresponding patterns
(Gelfand--Tsetlin patterns and type {\sf C} patterns),
we obtain the following statement.

\begin{cl}\label{basis}
The vector space $V(\lambda)_{\mu}^+$ has a basis 
$$
\left\{ \, p F_{n,-n}^{-\nu_1} F_{n, -n+1}^{ \mu_{n-1}{-}\nu_n} F_{n,n-1}^{\lambda_{n-1}{-}\nu_n} \ldots
F_{n, -k}^{ \mu_k{-}\nu_{k+1}} F_{n,k}^{\lambda_k{-}\nu_{k+1}} \ldots F_{n, -1}^{ \mu_1{-}\nu_2} F_{n,1}^{\lambda_1{-}\nu_2} v_\lambda\,\right\},
$$
parameterised by the $n$-tuples $\nu=(\nu_1,\dots,\nu_n)$ satisfying the
betweenness conditions
\begin{equation} \label{eq-nu}
\bal
&0\geqslant\nu_1\geqslant\la_1\geqslant\nu_2\geqslant
\la_2\geqslant \cdots\geqslant
\nu_{n-1}\geqslant\la_{n-1}\geqslant\nu_n\geqslant\la_n,\\
&0\geqslant\nu_1\geqslant\mu_1\geqslant\nu_2
\geqslant\mu_2\geqslant \cdots\geqslant
\nu_{n-1}\geqslant\mu_{n-1}\geqslant\nu_n.
\eal
\end{equation}
\end{cl}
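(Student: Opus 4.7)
The plan is to combine Proposition~\ref{lm-ess-br}, which provides a basis of $V(\lambda)^+$ in terms of essential monomials in $\cS(\gt r)$, with Theorem~\ref{GT-semi}, which describes $\widetilde\Gamma(\lambda)$ via the A-type inequalities of Example~\ref{ex-middle}, and then to cut down to the $\gt h_0$-weight $\mu$ isotypic component. First I would compute the $\gt h$-weights of the factors in $\gt r$: $F_{n,-n}$ has weight $2\epsilon_n$, $F_{n,-k}$ has weight $\epsilon_n+\epsilon_k$, and $F_{n,k}$ has weight $\epsilon_n-\epsilon_k$ for $1\le k<n$. With the exponents $\alpha_1,\alpha_{k+1},\alpha_{n+k}$ of Definition~\ref{m-order2}, the $\epsilon_k$-component of the weight of $m_1v_\lambda$ (for $1\le k<n$) equals $\lambda_k+\alpha_{k+1}-\alpha_{n+k}$, so requiring this to be $\mu_k$ forces the single relation $\alpha_{k+1}-\alpha_{n+k}=\mu_k-\lambda_k$. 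Parameterising its solutions by $\alpha_{k+1}=\mu_k-\nu_{k+1}$, $\alpha_{n+k}=\lambda_k-\nu_{k+1}$, together with $\alpha_1=-\nu_1$, rewrites $m_1$ exactly as the operator appearing in the corollary; reordering the factors relative to Definition~\ref{m-order2} is harmless since the only nontrivial bracket in $\gt r$ is the central $[F_{n,-k},F_{n,k}]=F_{n,-n}$.

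Second, I would translate the membership $m_1\in\ess(\lambda)$ into constraints on $\nu$. Nonnegativity of the exponents immediately yields the upper bounds $\nu_1\le 0$ and $\nu_{k+1}\le\min(\mu_k,\lambda_k)$. Under the bijection from the proof of Theorem~\ref{GT-semi}, sending $F_{n,-n}\leftrightarrow E_{n+1,1}$, $F_{n,-k}\leftrightarrow E_{n+1,n-k+1}$, and $F_{n,k}\leftrightarrow E_{n+1-k,1}$, the A-type inequalities of Example~\ref{ex-middle} should translate into exactly the lower bounds $\nu_{k+1}\ge\max(\lambda_{k+1},\mu_{k+1})$ for $1\le k\le n-2$ together with $\nu_n\ge\lambda_n$. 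Combining upper and lower bounds recovers the two chains of betweenness inequalities \eqref{eq-nu} precisely. By Proposition~\ref{lm-ess-br} the resulting vectors $pm_1v_\lambda$ are linearly independent, and their number equals the classical $\gt{sp}_{2n}\downarrow\gt{sp}_{2n-2}$ branching multiplicity (recorded in the proof of Theorem~\ref{GT-semi}), so they span $V(\lambda)^+_\mu$.

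The step I anticipate as most delicate is the second one: the translation of the A-type inequalities, which involve three-term combinations such as $\lambda_k-\lambda_{k+1}+\alpha_{k,1}-\alpha_{k+1,1}$ from Example~\ref{ex-middle}, into the clean chains of \eqref{eq-nu}. The bookkeeping must carefully reconcile the A-type and $\gt{sp}_{2n}$ coordinate conventions (the two sides are matched only at the level of fundamental-weight coefficients $c_k$). A safer fallback, if the direct inversion of the inequalities becomes burdensome, is to use Proposition~\ref{lm-ess-br} merely for linear independence of $\{pm_1v_\lambda\}$, then invoke the classical branching-multiplicity count for $\gt{sp}_{2n}\downarrow\gt{sp}_{2n-2}$, and verify only the easier implication that every $\nu$-tuple satisfying \eqref{eq-nu} produces an admissible essential monomial $m_1$.
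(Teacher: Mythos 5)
Your proposal is correct and follows the paper's own route: the corollary is obtained exactly as you describe, by combining Proposition~\ref{lm-ess-br} with the identification of $\widetilde\Gamma(\lambda)$ in Theorem~\ref{GT-semi} and translating the type~{\sf A} inequalities of Example~\ref{ex-middle} into the betweenness conditions via the bijection with patterns. One small tightening of your first step: the reordering issue is in fact vacuous, because the interleaved product in the corollary and the sequence \eqref{sp-seq1} give the \emph{same} element of $\U(\gt r)$ --- the only non-commuting pairs are $(F_{n,-k},F_{n,k})$, whose relative order is preserved, so no correction terms (which, being multiples of the central $F_{n,-n}$, would a priori produce \emph{larger} monomials in the order of Definition~\ref{m-order2}) ever arise.
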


Going  inductively through the chain of subalgebras
\begin{equation} \label{sp-chain}
\gt{sp}_2\subset \ldots \subset\gt{sp}_{2n-2}\subset \gt{sp}_{2n}
\end{equation}
and using Proposition~\ref{other} at each step, we obtain the basis of Theorem~B.
The chain
defines also the branching
semigroup  $\tilde\Gamma_{\gt{sp}_{2n}\downarrow \{0\}}$,  where
the order of Definition~\ref{m-order2} and the sequence of factors \eqref{sp-seq1} are used at each step.

\begin{rmk}
Arguing inductively, one can show that $\tilde\Gamma_{\gt{sp}_{2n}\downarrow \{0\}}$ is generated
by $\tilde\Gamma_{\gt{sp}_{2n}\downarrow \{0\}}(\varpi_k)$ with $1\le k\le n$.
This implies that
$\tilde\Gamma_{\gt{sp}_{2n}\downarrow \{0\}}$ is  {\it saturated}, i.e.,
$\tilde\Gamma_{\gt{sp}_{2n}\downarrow \{0\}}(N\lambda)= (\tilde\Gamma_{\gt{sp}_{2n}\downarrow \{0\}}(\lambda))^N$  for any  $N\in\mathbb N$ and any dominant weight $\lambda$.
In this situation, there is a nice toric degeneration of the complete flag variety
in the spirit of  \cite[Sect.~15]{xfp} and \cite[Sect.~10]{ffl:fm}.
\end{rmk}

\subsection{A different, more natural, order} \label{sp2}
In this section, it is more convenient to  use different indices for the  matrix realisation of $\gt g=\gt{sp}_{2n}$.
Now $\gt g$
is the linear span of   $F_{i\,j}$ with $i,j\in\{1,\dots,2n\}$, where
\begin{equation}
 F_{i\, j}
=E_{i\, j}-\ve_i\ts\ve_j\ts E_{j'\, i'}, \qquad i'=2n-i+1,
\end{equation}
$\ve_i=1$ for $i\le n$, and
$\ve_i=-1$ for $i> n$.
The subalgebra $\ggot_0=\spa_{2n-2}$ is spanned by the elements
$F_{i\,j}$ with $i,j\in\{2,\dots,2n-1\}$.
We have $\gt r=\left<F_{2n\,k} \mid 1\le k <2n\right>_{\mK}$.

This alternative presentation of $\gt g$ requires a change in the  convention  for
tuples $\lambda=(\lambda_1,\ldots,\lambda_n)$. Now $\lambda_1\ge \lambda_2\ge \ldots\ge \lambda_n\ge 0$
unlike the Introduction.
Fix highest weights
$\lambda=(\lambda_1,\ldots,\lambda_n)$ for $\ggot$ and $\mu=(\mu_2,\ldots,\mu_n)$ for $\ggot_0$,
where we suppose also that 
$\mu_2\ge \mu_3\ge \ldots\ge \mu_n\ge 0$. Assume that
the multiplicity space $U(\lambda,\mu)$ is nonzero.

Set $a_i=|\lambda_i-\mu_i|$ for $i\ge 2$ and
define the monomial $Y(\mu)=y_n^{a_n}\ldots y_2^{a_2}$ by the rule:
$$y_i=F_{i\,1} \ \text{ if }  \ \lambda_i\le \mu_i\fand
y_i=F_{2n\,i} \  \text{ if } \  \lambda_i>\mu_i.$$

Now use a non-zero vector  $\xi_\mu\in V(\lambda)_\mu^{+}$ defined in
 formula \cite[(9.69)]{m:yc}, cf.~\eqref{bcdhv}. We need the existence of this vector and the computation of its weight
w.r.t. to $F_{1\,1}$. In the notation of this section,   formula \cite[(9.69)]{m:yc} leads to the following 
$$
F_{1\,1} \xi_\mu=\Big(\lambda_1  - \sum\limits_{i=2}^n (2\max(\lambda_i,\mu_i)-\lambda_i-\mu_i)
\Big)\xi_\mu= \Big(\lambda_1-\sum\limits_{i=2}^n  a_i\Big) \xi_\mu.
$$
Hence, the $\h$-weight of $\xi_\mu$ coincides with that of
the vector $Y(\mu)\tss v_\lambda$.

\begin{rmk}\label{xi-mu}
If $m_1\in\cS(\gt r)$ is an eigenvector of $\gt h$ of the same weight as $Y(\mu)$, then
$m_1$ lies in $\mathbb C Y(\mu)$.
Thus
$\xi_\mu = p Y(\mu) v_\lambda$ and also
$\xi_\mu = \prod\limits_{i=2}^{n} (p y_i)^{a_i}\tss  v_\lambda$, up to non-zero scalar factors.
\end{rmk}

We would like to
find inequalities for $b,b_n,\ldots,b_2$ such that the corresponding
vectors
$$
p F_{2n\,1}^b (F_{2n\,n}F_{n\,1})^{b_n}\ldots(F_{2n\,2}F_{2\,1})^{b_2} Y(\mu) v_\lambda
$$
form a basis of $V(\lambda)_{\mu}^+$. 
For this purpose, 
the most natural monomial order on $\cS(\gt r)$ is suitable.

For a vector $\bar\alpha= (\alpha_2,\ldots,\alpha_{2n-1})$, set $|\bar\alpha|=\sum\limits_{k=2}^{2n-1}\alpha_k$.

\begin{df} \label{m-order}
We say that $F_{2n\,1}^{\alpha_{2n}} F_{2n\,2}^{\alpha_2} \ldots F_{2n\,2n{-}1}^{\alpha_{2n{-}1}} <
F_{2n\,1}^{\beta_{2n}} F_{2n\,2}^{\beta_2} \ldots F_{2n\,2n{-}1}^{\beta_{2n{-}1}}$ if and only if \\[.2ex]
either  $|\bar\alpha|<|\bar\beta|$ or $|\bar\alpha|=|\bar\beta|$  and there is $k$ such that $2\le k\le 2n$ and
$$
\alpha_k<\beta_k, \ \ \alpha_i=\beta_i \ \ \text{for all} \ \ i<k.
$$
\end{df}

A few remarks on the definition are due. \\[.2ex]
(1) Since we are comparing the degrees first, the sequence of factors of $m_1\in\U(\gt r)$ is not significant for
being essential.  \\[.2ex]
(2) Independently of the sequence of factors in $\U(\gt r)$, the chosen order satisfies \eqref{br-order}.
Therefore, by Proposition~\ref{lm-ess-br}, the subspace $V(\lambda)^{+}$ has a basis
$\{ pm_1 v_\lambda \mid m_1\in\ess(\lambda)\cap \cS(\gt r)\}$.

\begin{lm}\label{lm-gamma}
We have
{\begin{align}
& \Gamma(\varpi_1)=\{1,F_{2n\,1},F_{2\,1}\},  &  \nonumber \\
 & \Gamma(\varpi_k)=\{1,F_{2n\,1},F_{2n\,k},F_{k{+}1\,1}\} \ \ \text{if} \ \  2\le k<n,  \nonumber \\
& \Gamma(\varpi_n)=\{1,F_{2n\,1},F_{2n\,n}\}. \nonumber
\end{align} }
\end{lm}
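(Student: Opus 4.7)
The plan is to combine Proposition~\ref{lm-ess-br}, which identifies $\Gamma(\varpi_k)$ with a basis of the $\spa_{2n-2}$-invariant subspace $V(\varpi_k)^{+}$ via $m_1\mapsto p\,m_1\,v_{\varpi_k}$, with the classical branching dimensions $\dim V(\varpi_1)^{+}=\dim V(\varpi_n)^{+}=3$ and $\dim V(\varpi_k)^{+}=4$ for $2\le k<n$. These dimensions follow either from the standard $\spa_{2n}\downarrow\spa_{2n-2}$ rule for fundamental representations or, tautologically, from Theorem~\ref{GT-semi} applied to the same multiplicity spaces. It is therefore enough to verify that each monomial in the listed set belongs to $\ess(\varpi_k)\cap\cS(\gt r)$, since the cardinalities will then automatically match.

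All candidates have degree at most $1$ in $\cS(\gt r)$, and Definition~\ref{m-order} compares total degree first; so the only competitors of a degree-$1$ candidate $m_1$ are the constant $1$ and the lex-smaller degree-$1$ monomials $F_{2n,j}$ with various $j$ (using the symplectic identification $F_{i,1}\equiv -F_{2n,2n-i+1}$ in $\gt r$). A short computation of $\gt h$-weights shows that, as $j$ varies, the vectors $F_{2n,j}\,v_{\varpi_k}$ sit in pairwise distinct weight spaces of $V(\varpi_k)$, all different from $\varpi_k$. Consequently, essentiality of a candidate $m_1$ reduces to the single statement that $m_1\,v_{\varpi_k}$ is a nonzero vector of $V(\varpi_k)$.

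For the nonvanishing I would realise $V(\varpi_k)\subseteq \Lambda^k V(\varpi_1)$ as the kernel of contraction with the symplectic form $\omega$ on the defining representation, take $v_{\varpi_k}=e_1\wedge\dots\wedge e_k$, and compute by the Leibniz rule:
\begin{align*}
F_{2n,1}\,v_{\varpi_k} &= 2\,e_{2n}\wedge e_2\wedge\dots\wedge e_k, \\
F_{k+1,1}\,v_{\varpi_k} &= e_{k+1}\wedge e_2\wedge\dots\wedge e_k, \\
F_{2n,k}\,v_{\varpi_k} &= e_{k'}\wedge e_2\wedge\dots\wedge e_k + e_1\wedge\dots\wedge e_{k-1}\wedge e_{2n},
\end{align*}
with analogous formulas for $\varpi_1$ and $\varpi_n$. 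The first two are wedge products of $k$ distinct basis vectors whose $\gt h$-weights have $k$ nonzero signed $\varepsilon$-coordinates; these are extremal weights of $V(\varpi_k)$, of multiplicity one, and hence descend to nonzero vectors in the quotient.

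The main obstacle is the nonvanishing of $F_{2n,k}\,v_{\varpi_k}$, whose weight $\varepsilon_2+\dots+\varepsilon_{k-1}$ involves only $k-2$ nonzero coordinates and so lies in a higher-multiplicity weight space of $V(\varpi_k)$. Here the weight-$(\varepsilon_2+\dots+\varepsilon_{k-1})$ piece of $\Lambda^{k-2}V(\varpi_1)$ is one-dimensional, spanned by $e_2\wedge\dots\wedge e_{k-1}$, so $\omega\wedge\Lambda^{k-2}V(\varpi_1)$ meets the corresponding weight space of $\Lambda^k V(\varpi_1)$ in the single line through $\omega\wedge e_2\wedge\dots\wedge e_{k-1}$; expanding the latter in the standard basis of $\Lambda^k V(\varpi_1)$ and comparing coefficients against the two-term expression above one checks that the two are never proportional, so $F_{2n,k}\,v_{\varpi_k}$ survives in the quotient. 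With essentiality of all listed candidates in hand, Proposition~\ref{lm-ess-br} and the cardinality match complete the proof.
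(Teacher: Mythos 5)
Your overall strategy (match the cardinalities $3,4,\dots,4,3$ of $\dim V(\varpi_k)^{+}$ against a list of monomials proved to be essential) is sound, and your explicit computations in $\Lambda^k\CC^{2n}$ are correct; but the reduction of essentiality to nonvanishing has a genuine gap. A monomial $\tilde m_1\in\cS(\gt r)$ is essential when $\tilde m_1 v_\lambda$ avoids the span of $m\tss v_\lambda$ over \emph{all} monomials $m=m_0m_1\in\cS(\gt n^-)$ with $m<\tilde m_1$, and by the order of Section~\ref{sec-rules} this means $m_1<\tilde m_1$ in $\cS(\gt r)$ with $m_0\in\cS(\gt n_0^-)$ \emph{arbitrary}. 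So the competitors are not the single vectors $F_{2n\,j}\tss v_{\varpi_k}$ but the whole subspaces $\U(\gt n_0^-)F_{2n\,j}\tss v_{\varpi_k}$, and $\U(\gt n_0^-)$ shifts weights by sums of negative roots of $\spa_{2n-2}$; pairwise distinctness of the weights of the $F_{2n\,j}\tss v_{\varpi_k}$ therefore decides nothing. Concretely, for $2\le k<n$ and $k<j\le n$ the monomial $F_{j\,k}F_{2n\,j}$ is smaller than $F_{2n\,k}$, and $F_{j\,k}F_{2n\,j}\tss v_{\varpi_k}$ is a \emph{nonzero} vector of exactly the same weight $\epsilon_2+\dots+\epsilon_{k-1}$ as $F_{2n\,k}\tss v_{\varpi_k}$. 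Hence for the candidates $F_{2n\,k}$ with $2\le k<n$ --- precisely the delicate ones --- nonvanishing in the quotient is necessary but not sufficient, and your last paragraph does not finish the argument. (For $1$, $F_{2n\,1}$, $F_{k+1\,1}$ and $F_{2n\,n}$ your conclusion survives, but for reasons you did not state: the $\epsilon_1$-component of the weight is preserved by $\U(\gt n_0^-)$ and separates $F_{2n\,1}v_{\varpi_k}$ from the other competitors, and every degree-one monomial smaller than $F_{k+1\,1}=-F_{2n\,2n-k}$, resp.\ than $F_{2n\,n}$, annihilates the highest weight vector.)

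The missing step is to show $F_{2n\,k}\tss v_{\varpi_k}\notin\sum_{j>k}\U(\gt n_0^-)F_{2n\,j}\tss v_{\varpi_k}$, and it can be supplied by the grading the paper itself uses in the proof of Lemma~\ref{lm-gamma-2}: the decomposition $V(\varpi_1)=\mK v_1\oplus V'(\varpi_1)\oplus\mK v_{2n}$ is $\gt g_0$-stable, every $F_{2n\,j}\tss v_{\varpi_k}$ with $j>k$ lies in $\Lambda^kV'(\varpi_1)$ (tri-degree $(0,k,0)$) together with its entire $\U(\gt n_0^-)$-orbit, whereas $F_{2n\,k}\tss v_{\varpi_k}$ has the nonzero component $e_1\wedge\dots\wedge e_{k-1}\wedge e_{2n}$ of tri-degree $(1,k-2,1)$. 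With this addition the cardinality count does complete the proof. Note that the paper offers no argument for this lemma (it is stated to follow by ``direct calculations''), so there is no proof to compare against; but the calculation it implicitly relies on must contain exactly the degree argument above, which is the one that carries the harder Lemma~\ref{lm-gamma-2}.
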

\begin{proof}
The statements can be obtained by direct calculations.
\end{proof}

The dimension  of $U(\lambda,\mu)$ is the product of
$n$ positive integers $(d_1+1)\dots(d_n+1)$, where
$$
d_i=\min(\lambda_i,\mu_i)-\max(\lambda_{i+1},\mu_{i+1}),
$$
assuming that $\min(\lambda_1,\mu_1)=\la_1$ and $\lambda_{n+1}=\mu_{n+1}=0$; see e.g. \cite{m:gtb}.

Consider the $\gt{sl}_2$-triple $\{\frac{1}{2}F_{2n\,1}, F_{1\,1}, \frac{1}{2}F_{1\,2n}\}$.
The  subalgebra of $\gt g$ spanned by this triple acts on $U(\lambda,\mu)$ as on 
$\cS^{d_1}\mK^2\otimes\ldots \otimes \cS^{d_n}\mK^2$. Moreover,
$$\xi_\mu\in V(\lambda)_{\mu}^+\cong U(\lambda,\mu)$$ is a highest weight vector of this
representation
and its $F_{1\,1}$-weight is equal to $d_1+\ldots+d_n$.

For a vector $Y=y_n^{a_n}\ldots y_2^{a_2}$, where each $y_i$ is either $F_{i\,1}$ or
$F_{2n\,i}$ and $a_i\in\mathbb Z_{\ge 0}$ are arbitrary, set
 $$
\iota_{i}=\left\{ \begin{array}{l} 0 \ \text{ if } y_i=F_{i\,1}, \\
  1 \ \text{ if } y_i=F_{2n\,i}. \end{array}\right.
$$
This defines a vector $\bar\iota=(\iota_2,\ldots,\iota_{n})$, which depends on $Y$. Set $\iota_1=0$ and $a_{n+1}=0$.

We have
$$
\lambda=(\lambda_1-\lambda_2)\varpi_1 + \ldots + (\lambda_{n{-}1}-\lambda_n)\varpi_{n{-}1} +\lambda_n \varpi_n.
$$
Set $c_n=\lambda_n$ and $c_k=(\lambda_k-\lambda_{k{+}1})$ for $k<n$.
Suppose that $\xi_\mu = pYv_\lambda\ne 0$ for some $Y$ as above.
It is not difficult to see  then that $Y=Y(\mu)$ and
\begin{equation} \label{iota}
d_k = c_k - \iota_k a_k  - (1-\iota_{k+1}) a_{k+1}
\end{equation}
for each $k\ge 2$.
Informally speaking,   each $y_i$ in $Y$ decreases
$c_k$ by $1$ if $y_i\in\Gamma(\varpi_k)$. More formally, if   $y_i\in \Gamma(\varpi_k)$, then
$a_i\le c_k$ and therefore $y_i^{a_i}\in \Gamma(\varpi_k)^{c_k}\subset\Gamma(c_k \varpi_k)$.
Thus, $Y\in \Gamma(\lambda)$.
Note that Equation~\eqref{iota} defines the numbers $d_k=d_k(Y)$ for each vector $Y$ as above.

The next step is to consider  
$\Gamma(\varpi_k{+}\varpi_j)$ with $k\ne j$.

\begin{lm}\label{lm-gamma-2}
Suppose that $j>k$ and  $\lambda=\varpi_k+\varpi_j$. 
Then %
$$\ess(\lambda)\cap \cS(\gt r)^{\gt h_0}=\{1,  F_{2n\,1}, F_{2n\,1}^2, F_{2n\,j} F_{j\,1}\}.$$
\end{lm}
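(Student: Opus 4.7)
The plan combines a dimension count with an $\sll_2$-theoretic identification of the fourth essential $\gt h_0$-invariant.

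First, by Proposition~\ref{lm-ess-br}, $|\ess(\lambda) \cap \cS(\gt r)^{\gt h_0}| = \dim V(\lambda)^+_{\mu_0}$, where $\mu_0 = \lambda|_{\gt h_0}$. The product formula $\dim U(\lambda,\mu) = \prod(d_i + 1)$ recalled in Section~\ref{sp2} gives $\dim V(\lambda)^+_{\mu_0} = 4$ for $\lambda = \varpi_k + \varpi_j$, since exactly $d_k = d_j = 1$ and the remaining $d_i$ vanish. Using the identification $F_{a,1} = -F_{2n,2n-a+1}$ for $2 \le a \le n$ together with the $\gt h_0$-weights of the generators of $\gt r$, the $\gt h_0$-invariant monomials of degree at most $2$ are precisely $1$, $F_{2n,1}$, $F_{2n,1}^2$, and the $n-1$ pair-monomials $F_{2n,a}F_{a,1}$ for $a = 2, \ldots, n$. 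Within degree $2$, Definition~\ref{m-order} orders these as $F_{2n,1}^2 < F_{2n,n}F_{n,1} < \cdots < F_{2n,2}F_{2,1}$: for $2 \le a < b \le n$ the tuples first differ at position $a$ (where $F_{2n,a}F_{a,1}$ has entry $1$ and $F_{2n,b}F_{b,1}$ has entry $0$), so $F_{2n,b}F_{b,1} < F_{2n,a}F_{a,1}$.

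The three essentials $1, F_{2n,1}, F_{2n,1}^2$ are almost immediate. Since $\lambda_1 = 2$, they give nonzero vectors in three distinct $\gt h$-weight spaces $V(\lambda)_\lambda, V(\lambda)_{\lambda - 2\epsilon_1}, V(\lambda)_{\lambda - 4\epsilon_1}$, and a weight count rules out any smaller $\tilde m_0 \tilde m_1 \in \cS(\gt n^-)$ with a matching weight, because the $\epsilon_1$-component of the weight of any $\tilde m_0 \in \cS(\gt n_0^-)$ is zero whereas each required weight shift lies along $\epsilon_1$.

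The crucial step is to identify the fourth essential. View $V(\lambda)^+_{\mu_0}$ as a module for the $\sll_2$-triple $\{e, h, f\} = \{F_{1,2n}/2,\,F_{1,1},\,F_{2n,1}/2\}$, which centralises $\gt g_0$; it is isomorphic to $\mathbb C^2 \otimes \mathbb C^2 \cong V(2) \oplus V(0)$, with $V(2)$ already spanned by $v_\lambda, F_{2n,1}v_\lambda, F_{2n,1}^2 v_\lambda$. The fourth essential must therefore produce a vector with a nonzero $V(0)$-singlet component. Using the relation $[F_{2n,a}, F_{a,1}] = F_{2n,1}$ from the Heisenberg structure of $\gt r$, one rewrites $p F_{2n,a}F_{a,1} v_\lambda = p F_{a,1}F_{2n,a} v_\lambda + F_{2n,1} v_\lambda$, reducing the task to determining when $p F_{a,1}F_{2n,a}v_\lambda$ has a nonzero $V(0)$-component. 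I would argue this happens exactly when $a$ is an ``active'' position with $d_a = 1$ (i.e.\ $a \in \{k,j\}$) and not for the inactive $a$, where $p F_{a,1}F_{2n,a}v_\lambda$ stays in the $V(2)$-chain already spanned. Proceeding greedily through the $\gt h_0$-invariants in increasing order: the inactive candidates $F_{2n,a}F_{a,1}$ with $a > j$ are checked first and give nothing new; at $a = j$ the new singlet vector appears, so $F_{2n,j}F_{j,1}$ enters the essential set; the remaining candidates (including $F_{2n,k}F_{k,1}$ if $k \ge 2$, which is strictly larger than $F_{2n,j}F_{j,1}$) contribute singlet components already captured by $F_{2n,j}F_{j,1}$ and are therefore redundant. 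Combined with the dimension count of $4$, this forces $\ess(\lambda) \cap \cS(\gt r)^{\gt h_0} = \{1,\,F_{2n,1},\,F_{2n,1}^2,\,F_{2n,j}F_{j,1}\}$.

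The main obstacle is the singlet-versus-triplet dichotomy for $p F_{a,1}F_{2n,a}v_\lambda$ as $a$ varies. A direct route applies the extremal-projector formula from Section~\ref{sec-proj} to the explicit weight vector $F_{2n,a}v_\lambda$ and extracts the $F_{1,1}$-weight-zero component. A more conceptual route uses the bimodule decomposition $V(\lambda)^+_{\mu_0} \cong \mathbb C^2_k \otimes \mathbb C^2_j$ into active-position tensor factors, and interprets $F_{j,1}F_{2n,j}$ as acting primarily on the $j$-factor so as to produce the antisymmetric singlet, whereas operators indexed by inactive $a$ preserve the diagonal (symmetric, triplet) subspace.
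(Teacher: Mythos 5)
Your setup is sound and runs parallel to the paper's: the dimension count $\dim U(\lambda,\mu)=4$, the decomposition $\mK^3\oplus\mK$ under the $\gt{sl}_2$-triple $\left<F_{2n\,1},F_{1\,1},F_{1\,2n}\right>_{\mK}$, the identification of the candidates for the fourth essential monomial as the pair monomials $F_{2n\,a}F_{a\,1}$ with their ordering, and the observation that the smallest such monomial whose image under $p(\cdot)\tss v_\lambda$ acquires a nonzero singlet component must be the essential one. But the argument stops exactly where the lemma begins: the claim that $p\tss F_{2n\,a}F_{a\,1}v_\lambda$ has a nonzero singlet component precisely for the ``active'' positions $a\in\{k,j\}$ is something you say you ``would argue,'' with two possible routes sketched and neither carried out. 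That dichotomy \emph{is} the lemma --- the rest is bookkeeping --- so this is a genuine gap. Concretely, two facts are needed and unproven: (a) for every $s>j$ the vector $p\tss F_{2n\,s}F_{s\,1}v_\lambda$ is proportional to $F_{2n\,1}v_\lambda$ (pure triplet), so these smaller pair monomials are not essential; and (b) $F_{2n\,j}F_{j\,1}v_\lambda$ does not lie in the span of $\tilde m\tss v_\lambda$ with $\tilde m<F_{2n\,j}F_{j\,1}$ --- and in (b) one must also account for non-$\gt h_0$-invariant $m_1<F_{2n\,j}F_{j\,1}$ completed by a nontrivial $m_0\in\U(\gt n_0^-)$ to the correct weight (the relevant candidates are $F_{2n\,1}$, $F_{2n\,j}F_{k+1\,1}$ and $F_{j+1\,1}F_{k+1\,1}$), a case your greedy scan over $\gt h_0$-invariant monomials does not see.

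The paper settles both points by explicit computation in the model $V(\varpi_k)\otimes V(\varpi_j)\subset\bigwedge^k\CC^{2n}\otimes\bigwedge^j\CC^{2n}$. For (a) one finds $F_{2n\,s}F_{s\,1}(\bv_k\otimes\bv_j)=\tfrac12 F_{2n\,1}(\bv_k\otimes\bv_j)+u'$ with $u'$ in the image of the negative root vector $F_{s'\,s}\in\gt n_0^-$, whence $pu'=0$ by \eqref{eq-p}. For (b) one uses the $\gt g_0$-invariant tri-grading induced by $V(\varpi_1)=\mK v_1\oplus V'(\varpi_1)\oplus\mK v_{2n}$: the component of $F_{2n\,j}F_{j\,1}(\bv_k\otimes\bv_j)$ of degree $(0,k-1,1;1,j-1,0)$ cannot be produced by the smaller candidates. (The paper also disposes of the case $k=j-1$ immediately via $\ess(\varpi_k)\ess(\varpi_j)\subset\ess(\varpi_k+\varpi_j)$ and Lemma~\ref{lm-gamma}.) To complete your proof you would have to execute one of your proposed routes, e.g.\ the direct one via the extremal projector applied to $F_{2n\,a}v_\lambda$, which amounts to essentially the same computation.
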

\begin{proof}
Set $\mu=\lambda|_{\gt h_0}$. Then $\dim U(\lambda,\mu)=4$.
As a representation of $\gt{sl}_2=\left<F_{2n\,1},F_{1\,1},F_{1\,2n}\right>_{\mK}$\,, it decomposes as $\mK^3\oplus\mK$.
Since $F_{2n\,1}\in\Gamma(\varpi_i)$ for each $i$, we have
$F_{2n\,1}, F_{2n\,1}^2\in\ess(\lambda)$.
It remains to show that $F_{2n\,j} F_{j\,1}$ is essential.
In the case $k=j-1$, this 
follows form the inclusion~\eqref{+} and Lemma~\ref{lm-gamma}.
Therefore suppose that $k<j-1$.
Then
$\dim V(\lambda)^{+}$ as one of the numbers
$9$, $12$, and $16$, depending on $k$ and $j$.  In any case,
$\Gamma(\lambda)$ is the disjoint union
of three subsets, $X=\{1,F_{2n\,1}, F_{2n\,k}, F_{2n\,j}, F_{k+1\,1}, F_{j+1\,1}\}$, the product
$F_{2n\,1} (X\setminus\{1\})$,
and the  subset
$$
\{ F_{2n\,k}F_{2n\,j}, \ F_{2n\,k}F_{j+1\,1}, \ F_{k+1\,1}F_{2n\,j}, \ F_{k+1\,1}F_{j+1\,1} , \ x  \},
$$
where $p x v_\lambda\in V(\lambda)_\mu^+$ and the $F_{1\,1}$-weight of $x$ is $-2$.
Since $F_{2n\,1}\in X$,
these two conditions on $x$ imply that $x=F_{2n\,t} F_{t\,1}$ for some $t\le n$.

First we show that $t\le j$. If $j<n$,  take $s>j$.  Let us regard $V(\varpi_r)$ as a subspace of 
$\bigwedge^r \CC^{2n}$, where 
$\CC^{2n}=V(\varpi_1)$ is the underlying vector space of the defining representation of $\gt g$. 
Let $\{v_1,\dots,v_{2n}\}$ be 
the standard basis of $\CC^{2n}$. 
Then
$\bv_r=v_1\wedge\ldots \wedge v_r$ is a highest weight vector  of    $V(\varpi_r)$. 
Set $u=F_{2n\,s}F_{s\,1} (\bv_k\otimes \bv_j)$. Then 
$u= \frac{1}{2}F_{2n\,1} (\bv_k\otimes \bv_j) + u'$, where
$$
u'=(v_s\wedge v_2\wedge\ldots \wedge v_k)\otimes
(v_{s'}\wedge v_2\wedge\ldots \wedge v_j) +
(v_{s'}\wedge v_2\wedge\ldots \wedge v_k)\otimes
(v_{s}\wedge v_2\wedge\ldots \wedge v_j).
$$
Here $s' > n \ge s$ and $u' = \frac{1}{2} F_{s'\,s} \tilde u$ for
$$
\tilde u= (v_s\wedge v_2\wedge\ldots \wedge v_k)\otimes
(v_{s}\wedge v_2\wedge\ldots \wedge v_j) .
$$
Thereby  $pu'=0$ by \eqref{eq-p},  hence $pu=\frac{1}{2} F_{2n\,1}(\bv_k\otimes \bv_j)$ and
$F_{s\,1}F_{2n\,s}$ is not essential for $\varpi_k+\varpi_j$.
We have shown that $x\ge F_{j\,1}F_{2n\,j}$.

Assume that $F_{j\,1}F_{2n\,j}$ is not essential. Then $w=F_{j\,1}F_{2n\,j} (\bv_k\otimes\bv_j)$
lies in the linear span   of smaller than $F_{j\,1}F_{2n\,j}$ essential monomials. Each such monomial is
of the form $m_0 m_1$, where $m_1$ has weight $-2$ w.r.t. $F_{11}$ 
and
$m_1<F_{j\,1}F_{2n\,j}$. This is possible only for $F_{2n\,1}$, $F_{2n\,j}F_{k+1\,1}$, and
$F_{j+1\,1}F_{k+1\,1}$.

The decomposition $V(\varpi_1)=\mK v_1\oplus V'(\varpi_1) \oplus \mK v_{2n}$ leads to
a $\gt g_0$-invariant tri-grading on each $V(\varpi_r)$. In the tensor product $V(\varpi_k)\otimes V(\varpi_j)$,
the vector $F_{2n\,j}F_{j\,1}  (\bv_k\otimes \bv_j)$ has non-zero summands of degrees
$$
(0,k-1,1;1,j-1,0), \ \ (0,k,0;1,j-2,1),  \ \ (0,k,0;0,j,0).
$$
The monomials $F_{2n\,j}F_{k+1\,1}$ and
$F_{j+1\,1}F_{k+1\,1}$ produce vectors of degrees
$$
(0,k,0;1,j-2,1), \ \ (0,k,0;0,j,0), \ \text{ and } \ \ (0,k,0;0,j,0).
$$
This implies that the summand of degree $(0,k-1,1;1,j-1,0)$, which is equal to
$$w=(v_{2n}\wedge v_2\wedge\ldots\wedge v_k)\otimes(v_1\wedge\ldots\wedge v_j),$$
is written as $a m_0 F_{2n\,1} (\bv_k\otimes \bv_j)$ for some $a\in\mK$ and
$m_0\in\U(\gt n_0^-)$. However,
$F_{2n\,1} (\bv_k\otimes \bv_j)=2(w+\tilde w)$, where $\tilde w\ne 0$ is of degree
$(1,k-1,0;0,j-1,1)$.
This contradiction finishes the proof.
\end{proof}

\begin{prop} \label{sp-prop}
{\sf (i)} The defining  inequalities for $\Gamma(\lambda)$  in terms of
$$
F_{2n\,1}^b (F_{2n\,n}F_{n\,1})^{b_n}\ldots(F_{2n\,2}F_{2\,1})^{b_2}  y_n^{a_n}\ldots y_2^{a_2}
$$
are:
\begin{align}
& %
0\le d_k, \  \text{ where  the numbers $d_k$ are given by \eqref{iota}}, \label{1} \\
& b_k \le d_k, \label{2} \\
&  b_k \le d_1 + \sum\limits_{i=2}^{k{-}1} (d_i-2b_i) \ \ \text{for each $k$ such that } 2\le k\le n; \label{3} \\
& b+ 2\sum\limits_{k=2}^n b_k \le \sum\limits_{i=1}^n d_i. \label{4}
\end{align}
{\sf (ii)} The semigroup $\Gamma$ is generated by
$\Gamma(\varpi_t)$ and $\Gamma(\varpi_k{+}\varpi_j)$ with $1\le t,k,j\le n$ and $j> k+1$.
\end{prop}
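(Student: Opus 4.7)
My plan is to prove (i) and (ii) together by combining a dimension count on the tuples satisfying the inequalities, an explicit construction of essential monomials via the semigroup inclusion $\Gamma(\lambda)\Gamma(\lambda')\subset\Gamma(\lambda+\lambda')$, and the equality $|\ess(\lambda)\cap\cS(\gt r)|=\dim V(\lambda)^+$ from Proposition~\ref{lm-ess-br}. The first observation is that every $\gt h_0$-homogeneous monomial in $\cS(\gt r)$ projecting to weight $\mu$ admits a unique canonical form $F_{2n,1}^b\prod_{k=2}^n(F_{2n,k}F_{k,1})^{b_k}\tss Y(\mu)$, because $F_{2n,1}$ and each product $F_{2n,k}F_{k,1}$ are $\gt h_0$-invariant while $F_{2n,k}$ and $F_{k,1}$ carry opposite nonzero $\gt h_0$-weights $\mp\epsilon_k$, forcing the non-invariant part to coincide with the prescribed $Y(\mu)$. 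Hence (1)--(4) are conditions on the tuples $(b,b_2,\ldots,b_n)$ parametrising these monomials for each fixed $\mu$.

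Next I would count those tuples: for fixed $\mu$ they number $\prod_{k=1}^n(d_k+1)=\dim U(\lambda,\mu)$, by induction on $n$. Setting $B_n=d_1+\sum_{i=2}^{n-1}(d_i-2b_i)$, a direct computation shows $\sum_{b_n=0}^{\min(d_n,B_n)}(B_n+d_n-2b_n+1)=(d_n+1)(B_n+1)$ in both the cases $d_n\le B_n$ and $d_n>B_n$; this factors out $(d_n+1)$ and leaves the $(n{-}1)$-variable instance of the same problem with $B_n$ playing the role of the bound in~(4). For sufficiency I would realise each admissible monomial as a product of essential monomials for the summands in $\lambda=\sum_t c_t\varpi_t$: each $y_i$-factor and the central $F_{2n,1}$-factors come from some $\Gamma(\varpi_t)$ via Lemma~\ref{lm-gamma}, while each pair $F_{2n,k}F_{k,1}$ is obtained from $\Gamma(\varpi_{k'}+\varpi_k)$ for a suitable $k'<k$ via Lemma~\ref{lm-gamma-2}. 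The semigroup inclusion then places the product in $\Gamma(\lambda)$.

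Summing the count over $\mu$ yields $\sum_\mu\dim U(\lambda,\mu)=\dim V(\lambda)^+$ essential monomials; since Proposition~\ref{lm-ess-br} gives $|\ess(\lambda)\cap\cS(\gt r)|=\dim V(\lambda)^+$, the inequalities (1)--(4) characterise exactly the essential canonical monomials, proving~(i). Part~(ii) then follows from the same construction: only the generators $\Gamma(\varpi_t)$ and $\Gamma(\varpi_{k'}+\varpi_k)$ with $k>k'+1$ are needed, since the adjacent pair $F_{2n,k+1}F_{k+1,1}$ already belongs to $\Gamma(\varpi_k)\Gamma(\varpi_{k+1})$ via the product $F_{k+1,1}\cdot F_{2n,k+1}$. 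The main obstacle is the allocation argument: one must show, for every admissible tuple $(b,b_2,\ldots,b_n)$, that there is an assignment of the $b_k$ pairs and the $b$ central factors to distinct pairwise and single $\varpi$-summands consistent with the capacities $c_t$. Inequality~(3) controls the leftover supply of $\varpi_{k'}$ with $k'<k$ after the pairs for smaller indices have been allocated, and~(4) bounds the total budget for central contributions; verifying that (1)--(4) are the sharp feasibility conditions of this Hall-type matching is the combinatorial heart of the proof.
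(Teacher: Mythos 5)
Your overall strategy is the same as the paper's: reduce to the canonical form $F_{2n\,1}^b\prod_k(F_{2n\,k}F_{k\,1})^{b_k}\tss Y(\mu)$ using $\gt h_0$-weights, count the tuples satisfying \eqref{2}--\eqref{4} for fixed $\mu$ to get $\prod_i(d_i+1)=\dim U(\lambda,\mu)$, exhibit enough elements of $\Gamma(\lambda)$ via the semigroup inclusion \eqref{+} together with Lemmas~\ref{lm-gamma} and~\ref{lm-gamma-2}, and conclude by comparing with $|\ess(\lambda)\cap\cS(\gt r)|=\dim V(\lambda)^+$ from Proposition~\ref{lm-ess-br}. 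Your counting step is correct and is in fact a direct arithmetic version of the paper's argument (the paper phrases the same induction via the decomposition of $\cS^{d_1}\mK^2\otimes\cdots\otimes\cS^{d_n}\mK^2$ into irreducible $\gt{sl}_2$-modules, with $b$ ranging over a summand of dimension $\sum d_i+1-2\sum b_k$); the telescoping identity $\sum_{b_n}(B_n+d_n-2b_n+1)=(d_n+1)(B_n+1)$ is exactly what makes that work.

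However, there is a genuine gap, and you name it yourself: the ``allocation argument.'' You assert that every tuple $(b,b_2,\dots,b_n)$ satisfying \eqref{1}--\eqref{4} can be written as a product of elements of the sets $\Gamma(\varpi_t)$ and $\Gamma(\varpi_{k'}+\varpi_j)$ consistently with the capacities $c_t$, but you do not prove it; you only remark that verifying this ``Hall-type matching'' is the combinatorial heart of the proof. Since the dimension comparison only gives an upper bound $|\Gamma(\lambda)$ restricted to weight $\mu| \le \prod(d_i+1)$ from the injectivity of $pm_1v_\lambda$, without the sufficiency direction you cannot conclude that \eqref{1}--\eqref{4} are the \emph{defining} inequalities, nor deduce part (ii). The paper closes exactly this gap by induction on $|\bar b|$: after stripping off the $y_i$-factors one is left with $\tilde\lambda=\sum_i d_i\varpi_i$ (via \eqref{iota}); one then takes the smallest $k$ with $d_k\ne 0$, observes that \eqref{3} forces $b_i=0$ for all $i$ up to the next nonzero $d_r$, removes one factor $F_{2n\,j}F_{j\,1}$ (for $j$ the smallest index with $b_j\ne 0$) charged to $\varpi_k+\varpi_j$, and checks line by line that \eqref{2}--\eqref{4} persist for the reduced tuple and the reduced weight $\tilde\lambda-(\varpi_k+\varpi_j)$. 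That greedy peeling plus the bookkeeping of the three inequalities is the substance of the proof and is missing from your proposal.
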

\begin{proof}
{\sf (i)} The inequalities \eqref{1} are equivalent to $U(\lambda,\mu)\ne 0$, where
$\mu$ is the $\gt h_0$-weight of $y_n^{a_n}\ldots y_2^{a_2} v_\lambda$.
Each weight $\mu$ such that $U(\lambda,\mu)\ne 0$ defines the tuple $\bar a=(a_2,\ldots,a_n)$ uniquely.  
Let $\bar a$ be fixed.

Next we show that the number of tuples $(b,b_n,\ldots,b_2)\in\mathbb Z^n_{+}$ satisfying the
inequalities \eqref{2}--\eqref{4} is equal to $\prod\limits_{i=1}^{n} (d_i+1)=\dim U(\lambda,\mu)$. We argue by induction on
$n$. If $n=1$, then there is just one inequality $b\le d_1$.  There are $d_1+1$ possibilities for $b$.

Suppose that $n=2$. Then $b_2\le d_1,d_2$.
Each admissible $b_2$ corresponds to the
irreducible $\gt{sl}_2$-submodule of $\cS^{d_1}\mathbb C^2\otimes \cS^{d_2}\mathbb C^2$
of dimension $d_1+d_2+1 -2b_2$. If $b_2$ is fixed, then there are exactly
$d_1+d_2-2b_2+1$ possibilities for $b$. For $n=2$, the number of tuples $(b,b_2)$ is correct.

Suppose now that $n>2$ and that for $n-1$ there is a bijection between
the tuples $\bar b=(b_2,\ldots,b_{n-1})$ satisfying the inequalities and the irreducible $\gt{sl}_2$-submodules of
$$
\cS^{d_1}\mathbb C^2 \otimes \ldots\otimes \cS^{d_{n-1}}\mathbb C^2
$$
such that the module $V(\bar b)$ corresponding to $\bar b$ is of dimension
$$\sum\limits_{i=1}^{n-1} d_i + 1 - 2\sum\limits_{i=2}^{n-1} b_i.$$
The irreducible submodules  of $V(\bar b)\otimes \cS^{d_n}\mK^2$ can be enumerated  by
integers
$b_n$ such that $$0\le b_n\le \min(d_n, \dim V(\bar b)-1).$$
We can arrange the submodules in such a way that the dimension decreases when $b_n$ increases.
Then $b_n$, or rather $(b_2,\ldots,b_{n-1},b_n)$, corresponds to the summand of dimension
$$\sum\limits_{i=1}^{n} d_i + 1 - 2\sum\limits_{i=2}^{n} b_i.$$
 This completes the  inductive argument. 

In the proof of  part {\sf (ii)} below, we show that each admissible tuple
$$(a_2,\ldots,a_n,b,b_2,\ldots,b_n)$$ defines a monomial of  $\Gamma(\lambda)$.
Hence by the dimension reasons,
{\sf (i)}  holds.

{\sf (ii)} For convenience, we will identify the monomials $m_1\in\Gamma(\lambda)$ with the
tuples of their exponents and use additive notation for $\Gamma(\lambda)$,
so that
$\Gamma(\lambda)+\Gamma(\lambda')\subset \Gamma(\lambda+\lambda')$; see \eqref{+}.

Let $(\bar b,\bar a)$ with $\bar b=(b,b_2,\ldots,b_{n})$, $\bar a=(a_2,\ldots,a_n)$ be an admissible tuple.
Recall that each $y_i$ belongs to a unique $\Gamma(\varpi_s)$ with $s=s(i)$.
Set $\tilde\lambda=\lambda-\sum\limits_{i=2}^n a_i \varpi_{s(i)}$.
In view of \eqref{iota}, we have $\tilde\lambda=\sum\limits_{i=1}^{n-1} d_i \varpi_i$.
The inequalities \eqref{1} guarantee that $\tilde\lambda$ is a dominant weight of  $\gt g$.
If $\bar b$, identified with $(\bar b,\bar 0)$, lies in $\Gamma(\tilde\lambda)$, then
$$
(\bar b,\bar a)\in \Gamma(\tilde\lambda)+\sum_{i=2}^n a_i \Gamma(\varpi_{s(i)})\subset\Gamma(\lambda).
$$
Next we express $\bar b$  as a sum of tuples belonging to  sets
$\Gamma(\varpi_t)$ and $\Gamma(\varpi_k+\varpi_j)$ and show that indeed
$\bar b\in\Gamma(\tilde\lambda)$.

If all $d_k$ are zero, then $\bar b=0$ and  there is nothing to prove.
Suppose next that $d_k\ne 0$ only for $k=i$. Then $b_j=0$ for all $j\ge 2$ and
only $F_{2n\,1}^b$ with $b\le d_i$ is left. Here $\bar b\in d_i \Gamma(\varpi_i)$.
The proof continues by induction on $|\bar b|=b+b_2+\ldots + b_n$.

Let $k<r$ be the smallest integers such that $d_r,d_k\ne 0$.
Note that $b_2=\ldots=b_{r-1}=0$. If all $b_i$ with $i\ge 2$ are equal to zero, then
$\bar b=(b,0,\ldots,0)$, where $b\le \sum\limits_{i=1}^n d_i$. Again, such $\bar b$ belongs
to $\sum\limits_{i=1}^{n} d_i \Gamma(\varpi_i)\subset\Gamma(\tilde\lambda)$. Therefore assume that $\bar b\ne (b,0,\ldots,0)$.

Let $j\ge r$ be the smallest integer  such that $b_r\ne 0$.
We divide  our monomial by $F_{2n\,j}F_{j\,1}$, which is an element of   $\Gamma(\varpi_k{+}\varpi_j)$ by Lemma~\ref{lm-gamma-2}.  Note that in case $j=k+1$, we have
$F_{2n\,j}F_{j\,1}\in \Gamma(\varpi_j)\Gamma(\varpi_k)$.
The division corresponds to replacing
$\bar b$ with  $\bar b'=(b,b_2',\ldots,b_n')$, where $b_i'=b_i$ for $i\ne j$ and  $b_j'=b_j-1$.
Accordingly, set $\lambda'=\tilde\lambda-(\varpi_k+\varpi_j)$.
We have $\lambda'=\sum\limits_{i=1}^n {d_i'} \varpi_i$, where $d_i'=d_i$ for $i\ne k,j$ and
$d_i'=d_i-1$ for $i\in\{k,j\}$.
The next task is to see that the inequalities \eqref{2}--\eqref{4}  hold for $\bar b'$ and $\lambda'$.

Consider \eqref{2}. For $i\ne k,j$, we have $b_i'=b_i\le d_i=d_i'$.
If $k=1$, then there is no  $b_k$. If  $k\ge 2$, then $b_k'=b_k=0$ and $b_k\le d_k'$.
Finally,  $b_j'=b_j-1\le d_j-1=d_j'$. These inequalities hold.

Consider \eqref{3}.  For $s<j$, we have $b_s=0$.
 Clearly, the inequalities hold for all such $s$. 
For the index  $j$, we have
$$
b_j'\le \left(\sum\limits_{t=1}^{j-1} d_t\right) -1 = \sum\limits_{t=1}^{j-1} d_t' = d_1' + \sum\limits_{t=2}^{j-1} (d_t'-2b_t').
$$
 For $s>j$, the new right hand side
$d_1'+\sum\limits_{t=2}^{s-1} (d_t'-2b_t')$ is equal to the old one. Since $b_s'=b_s$ here, all
the inequalities hold.

Finally, consider \eqref{4}. We have
\ben
\sum\limits_{i=1}^n {d_i'}-2\sum\limits_{t=2}^{n} b_t'= \sum\limits_{i=1}^n {d_i}-2\sum\limits_{t=2}^{n} b_t.
\een
Hence the inequality for $b$ holds.

Summing up, $\bar b'$ belongs to $\Gamma(\lambda')$, because $|\bar b'|< |\bar b|$, and hence  $\bar b$ belongs to $\Gamma(\tilde\lambda)$.
\end{proof}

The perspective on $U(\lambda,\mu)$ taken  in this section differs from the usual one.
In order to obtain a basis, we have regarded $U(\lambda,\nu)$ as a direct sum
of $\gt{sl}_2$-modules instead of a tensor product. On the one side,
this leads to a more complicated set of inequalities, on the other, we are getting one more basis. 

Set $\tilde p=p_{\gt{sl}_2} p$, where $p_{\gt{sl}_2}$ is the extremal projector associated with
$\gt{sl}_2=\left<F_{2n\,1},F_{1\,1},F_{1\,2n}\right>_{\mK}$ and $p$ is the projector
of $\gt g_0$ as before.  Let us restrict $V(\lambda)$ to $\gt g_0\oplus\gt{sl}_2$.

\begin{cl}
The subspace  $V(\lambda)^+\cap V(\lambda)^{F_{1\,2n}}$ has a basis
$$
\{\tilde p m_1 v_\lambda \mid m_1\in \Gamma(\lambda) \text{ is given by exponents} \ (0,b_2,\ldots,b_n,a_2,\ldots,a_n)\},
$$
i.e., we are taking the subset, where $b=0$.
\end{cl}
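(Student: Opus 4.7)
The first step is the observation that the subalgebra $\gt{sl}_2 := \left<F_{2n\,1}, F_{1\,1}, F_{1\,2n}\right>_{\mK}$ commutes element-wise with $\ggot_0$: from the definition $F_{i\,j} = E_{i\,j} - \ve_i\ve_j E_{j'\,i'}$ one checks that for $2\le i,j\le 2n-1$ all four indices $i,j,i',j'$ lie in $\{2,\dots,2n-1\}$, so the matrix commutators with $E_{2n\,1}$, $E_{1\,1}-E_{2n\,2n}$, and $E_{1\,2n}$ vanish.  Consequently $\ggot_1 := \ggot_0 \oplus \gt{sl}_2$ is a reductive subalgebra normalised by $\gt h$, the extremal projectors $p$ and $p_{\gt{sl}_2}$ commute, and $\tilde p = p_{\gt{sl}_2}p$ is the extremal projector of $\ggot_1$.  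Its image in $V(\lambda)$ is precisely the joint $\ggot_1$-highest weight subspace, i.e., $V(\lambda)^+ \cap V(\lambda)^{F_{1\,2n}}$.

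The plan is then to work inside each weight space $V(\lambda)^+_\mu$, exploiting the $\gt{sl}_2$-module isomorphism $V(\lambda)^+_\mu \cong \cS^{d_1}\mK^2 \otimes\cdots\otimes \cS^{d_n}\mK^2$ recorded in Section~\ref{sp2}.  The inductive argument in the proof of Proposition~\ref{sp-prop}(i) provides a bijection between the tuples $\bar b=(b_2,\dots,b_n)$ satisfying \eqref{2}--\eqref{3} and the irreducible $\gt{sl}_2$-summands $M(\bar b)$ of this tensor product, with $M(\bar b)$ of highest weight $D(\bar b) := \sum_i d_i - 2\sum_k b_k$.  Adding inequality \eqref{2} at $k=n$ to \eqref{3} at $k=n$ yields $2\sum_k b_k \le \sum_i d_i$, so \eqref{4} with $b=0$ is automatic, and the admissible tuples with $b=0$ correspond bijectively to such $\bar b$.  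Setting $u_{\bar b} := p\, m_{\bar b}\, v_\lambda$ (the $b=0$ basis vector of Proposition~\ref{sp-prop}), a short $\gt h$-weight computation (each $F_{2n\,k}F_{k\,1}$ contributes $-2$ and each $y_i$ contributes $-1$ to the $F_{1\,1}$-weight, combined with the identity $\sum_i d_i=\lambda_1-\sum_i a_i$) shows that $u_{\bar b}$ has $F_{1\,1}$-weight exactly $D(\bar b)$.  Summing cardinalities over $\mu$ therefore already matches the desired dimension of $V(\lambda)^+\cap V(\lambda)^{F_{1\,2n}}$.

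The crucial remaining point is to track $\tilde p$ on the basis of Proposition~\ref{sp-prop}.  Since $F_{2n\,1}$ is central in $\gt r$ and commutes with $\ggot_0$ (hence with $p$), each basis vector equals $F_{2n\,1}^b u_{\bar b}$ and $\tilde p(F_{2n\,1}^b m_{\bar b} v_\lambda) = p_{\gt{sl}_2}(F_{2n\,1}^b u_{\bar b})$.  Decomposing $u_{\bar b} = \sum_{\tilde b} u_{\bar b}^{\tilde b}$ along $V(\lambda)^+_\mu = \bigoplus_{\tilde b} M(\tilde b)$, non-vanishing of the component $u_{\bar b}^{\tilde b}$ (which lives at weight $D(\bar b)$) forces $D(\tilde b)\ge D(\bar b)$, whereas $p_{\gt{sl}_2}$ is non-trivial on $F_{2n\,1}^b u_{\bar b}^{\tilde b}$ only when this vector sits at the top of $M(\tilde b)$, i.e.\ when $D(\tilde b)=D(\bar b)-2b$; for $b>0$ these conditions are incompatible and so $\tilde p(F_{2n\,1}^b u_{\bar b})=0$.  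For linear independence of the $b=0$ vectors, fix an $F_{1\,1}$-weight $D$: the basis of Proposition~\ref{sp-prop} splits the weight space $V_D\subset V(\lambda)^+_\mu$ as $S_D \oplus K_D$, where $S_D := \left<u_{\bar b'}\colon D(\bar b')=D\right>_{\mK}$ and $K_D := \left<F_{2n\,1}^b u_{\bar b'}\colon b>0,\,D(\bar b')-2b=D\right>_{\mK}$.  By the previous step $K_D\subseteq\ker\tilde p|_{V_D}$; on the other hand $\dim\ker\tilde p|_{V_D}=\dim V_D-\dim W_D$, where $W_D$ denotes the weight-$D$ part of $V(\lambda)^+_\mu\cap\ker F_{1\,2n}$, and this number equals $\dim K_D=\dim V_D-\dim S_D$ because both $\dim S_D$ and $\dim W_D$ count $\#\{\bar b'\colon D(\bar b')=D\}$.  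Hence $K_D=\ker\tilde p|_{V_D}$ and $\tilde p|_{S_D}\colon S_D \to W_D$ is an isomorphism, which gives the claim.  The delicate part is the identification of the summands $M(\bar b)$ hidden in the proof of Proposition~\ref{sp-prop}(i); once that isotypic information is made explicit, the dimension equalities do all the remaining work.
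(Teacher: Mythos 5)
Your proof is correct and follows the route the paper intends: the corollary is stated there without proof, as an immediate consequence of the $\gt{sl}_2$-decomposition of $U(\lambda,\mu)$ set up in the proof of Proposition~\ref{sp-prop}(i), and you have simply made explicit the bijection between the $b=0$ tuples and the irreducible $\gt{sl}_2\oplus\ggot_0$-summands together with the dimension count. The only claim to phrase with care is the blanket assertion that $\tilde p\tss(F_{2n\,1}^{b}u_{\bar b})=0$ for every $b>0$: when $D(\bar b)-2b\le -2$ the projector $p_{\gt{sl}_2}$ is not actually defined on that vector (its denominators vanish), so one should either say that $K_D$ lies in $F_{2n\,1}V(\lambda)^+_{\mu}\cap V_D$, which is exactly the kernel of $p_{\gt{sl}_2}$ on $V_D$, or note that your kernel computation is only ever invoked on weight spaces $V_D$ with $D\ge 0$, where $p_{\gt{sl}_2}$ is everywhere defined and your reasoning goes through unchanged.
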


The chain of subalgebras \eqref{sp-chain}
can be used in order to extend the basis of Proposition~\ref{sp-prop}  to a basis
for $V(\lambda)$.

\section{Relations to the Gelfand--Tsetlin basis } 
\label{sec:rgt}

We start by recalling a construction of the celebrated basis of Gelfand and Tsetlin~\cite{gt:fdu}
for each finite-dimensional irreducible representation $L(\la)$ of $\gl_n$
as defined in the Introduction. We refer the reader to the review paper~\cite{m:gtb}
where several such constructions are discussed. To be consistent with the notation of that paper
we will now let $\xi$ denote the highest weight vector of $L(\la)$ (along with $v_{\lambda}$).

Consider the extremal projector $p$ associated with the Lie algebra $\gl_{n-1}$.
Recall that the {\em Mickelsson--Zhelobenko algebra} $\Zr(\gl_n,\gl_{n-1})$
is generated by the elements $E_{n\tss n}$,
$pE_{i\tss n}$ and $pE_{n\tss i}$ with $i=1,\dots,n-1$; see \cite[Sect.~2.3]{m:gtb}
for the definitions. The {\em lowering operators}
are elements of the universal enveloping algebra $\U(\gl_n)$ which can be defined by
the formulas
\beql{alow2}
z_{n\tss k}=pE_{n\tss k}\ts
(h_k-h_{k+1})\cdots (h_k-h_{n-1}),
\eeq
where $h_k=E_{k\tss k}-k+1$. By the branching rule,
the restriction of $L(\la)$ to the subalgebra $\gl_{n-1}$ is isomorphic
to the direct sum of irreducible $\gl_{n-1}$-modules $L'(\mu)$,
\ben
L(\lambda)|^{}_{\gl_{n-1}}\simeq \underset{\mu}\oplus\ts L'(\mu),
\een
summed over the highest
weights $\mu=(\mu_1,\dots,\mu_{n-1})$ satisfying the betweenness conditions
\beql{amulam2}
\lambda_i-\mu_i\in\ZZ_+ 	\quad\text{and}\quad \mu_i-\lambda_{i+1}\in
\ZZ_+ 	\qquad\text{for}\quad i=1,\dots,n-1.
\eeq
The $\gl_{n-1}$-submodule in $L(\lambda)$ isomorphic to $L'(\mu)$
is generated by the vector
\ben
\xi_{\mu}=z_{n\tss 1}^{\la_1-\mu_1}\dots z_{n\ts n-1}^{\la_{n-1}-\mu_{n-1}}\ts\xi.
\een
In the next lemma we suppose that each of the highest weights $\mu$ and $\mu'$ satisfies conditions \eqref{amulam2}  and we use
the lexicographical ordering $\succ$ on such weights,
where for complex numbers $a$ and $b$ we
assume that $a\geqslant b$ if and only if $a-b\in \ZZ_+$.

\ble\label{lem:hwcomp}
For any given $\mu$, in the module $L(\la)$ we have
\ben
E_{n\tss 1}^{\la^{}_{1}-\mu^{}_{1}}
\dots E_{n\ts n-1}^{\la^{}_{n-1}-\mu^{}_{n-1}}\tss\xi =
c\ts\tss \xi_{\mu}
+\sum_{\mu'\succ\mu}\ts u(\mu')\ts
\xi_{\mu'}
\een
for a nonzero constant $c$ and some elements $u(\mu')\in\U(\n^-_0)$,
where the sum is taken over the highest weights $\mu'$
satisfying conditions \eqref{amulam2}.
\ele

\bpf
Starting from the rightmost
generator which occurs in the product on the left hand side and
proceeding to the left, we use the
{\em inversion formula}
\ben
E_{n\tss k}=pE_{n\tss k}+\sum_{k<k_1<\cdots<k_s<n}
E_{k_1k}E_{k_2k_1}\dots E_{k_sk_{s-1}}\ts p\tss E_{n\tss k_s}
\frac{1}{(h_{k_s}-h_{k})(h_{k_s}-h_{k_1})\cdots (h_{k_s}-h_{k_{s-1}})},
\een
summed over $s=1,2,\dots$. Arguing by induction, observe that
each generator $E_{n\tss l}$ with $l\leqslant k$ commutes with
all factors $E_{k_1k},E_{k_2k_1},\dots, E_{k_sk_{s-1}}$ so that
the proof is completed by using
\eqref{alow2} and taking into account the fact that the lowering operators $z_{n\tss k}$
pairwise commute.
\epf

The vectors $\xi^{}_{\Lambda}$ of the Gelfand--Tsetlin basis  $\{\xi_\Lambda\}$
of $L(\la)$ are parameterised by the patterns $\La$ defined in the Introduction.
They are found by the formula
\beql{axiL}
\xi^{}_{\Lambda}=\prod_{k=2,\dots,n}^{\longrightarrow}
\Bigl(z_{k\tss 1}^{\lambda^{}_{k\tss 1}-\lambda^{}_{k-1\ts 1}}\dots
z_{k\ts k-1}^{\lambda^{}_{k\ts k-1}-\lambda^{}_{k-1\ts k-1}}\Bigr)\ts\xi.
\eeq
Represent each pattern $\La$ associated with $\la$ as the sequence of its rows:
\ben
\La=(\bar\la_{n-1},\dots,\bar\la_{1}),\qquad \bar\la_{k}=(\la_{k1},\dots,\la_{kk}),
\een
and consider the lexicographical ordering $\succ$ on the sequences by using
the ordering on the highest weights introduced above.
Recall the vectors $\pi^{}_{\La}$ defined in Theorem A. We now obtain a  proof
of this theorem.

\bpr\label{prop:triang}
For each pattern $\La$ associated with $\la$,
in the module $L(\la)$ we have
\ben
\pi^{}_{\La}=\sum_{\La'\succcurlyeq \La} c_{\La,\La'}\tss \xi^{}_{\La'}
\ \ \text{ and hence } \ \
\xi_{\Lambda} = \sum_{\La'\succcurlyeq \La} d_{\La,\La'}\tss \pi^{}_{\La'}
\een
for some constants $c_{\La,\La'}$ and $d_{\Lambda,\Lambda'}$, whereby $c_{\La,\La}=d_{\La,\La}^{-1}\ne 0$.
\epr

\bpf
Due to the inductive structure of the vectors \eqref{axiL}, the proposition follows
by a repeated application of Lemma~\ref{lem:hwcomp}.
\epf

\subsection{The PBW-parameterisation of the canonical basis}  \label{L-PBW}
The canonical basis for  $V(\lambda)$ constructed by Lusztig \cite{L1,L2}
has a
PBW-parameterisation, which fits into the FFLV-framework.

Let $\omega_0=s_{i_1}\ldots s_{i_N}$ be a reduced decomposition of the longest element
$\omega_0\in W(\gt g,\gt h)$ of the Weyl group.
Define the sequence of positive roots $\beta_1,\ldots,\beta_N$ by
$\beta_k=s_{i_1}\ldots s_{i_{k-1}} (\alpha_{i_k})$, where $\alpha_r$ is the $r$th simple root.
Then $\beta_t\ne \beta_k$ for $k\ne t$, see e.g. \cite[Sect.~12]{xfp}.
Let $f_k$ be the negative root vector corresponding to ${\beta_k}$. Make use of  the
{\it  right opposite} lexicographical order on the monomials
$f_1^{a_1}\ldots f_N^{a_N}$, which means that
$f_1^{a_1}\ldots f_N^{a_N}< f_1^{a_1'}\ldots f_N^{a_N'}$ if and only if there is $k$ such that
$1\le k\le N$ and
$$
a_k>a_k', \ \ a_r=a_r' \ \text{ for } \ r>k.
$$
Use the same sequence of vectors for the elements of $\U(\gt n^-)$.
Then the elements of the canonical basis for $V(\lambda)$ are in bijection
with $\ess(\lambda)$. Moreover, if the element $B(m)v_\lambda$ of the canonical basis corresponds
to $m\in\ess(\lambda)$, then
\begin{equation} \label{can}
B(m)v_\lambda \in m v_\lambda  + \left< \tilde m v_\lambda \mid \tilde m < m\right>_{\mK},
\end{equation}
see e.g. \cite[Sect.~12]{xfp}.  Note that we have omitted the ``height weighted function $\Psi$"  of \cite{xfp}
on the monomials, because it becomes redundant once one fixes a finite-dimensional module
$V(\lambda)$.

Let $B_\lambda$ be the canonical basis of $V(\lambda)$. 
Then  the dual basis $B^*_\lambda\subset V(\lambda^*)$ is
 {\it good} in the terminology of 
 \cite{bz-duke} by \cite[Theorem~4.4]{L2}.

\begin{ex} Let $\gt g$ be of type {\sf A}$_{n-1}$. Choose
the decomposition
$\omega_0= s_1 s_2s_1 \ldots s_{n-1}\ldots s_2s_1$. Then
$$
f_1 \ldots f_N = E_{2\,1} E_{3\,1} E_{3\,2} \ldots E_{n\,1} E_{n\,2} \ldots E_{n\,n-1}.
$$
The right opposite lexicographical order satisfies the assumptions of Section~\ref{sec-rules} at each
step of the reductions along the Gelfand--Tsetlin chain of subalgebras. Therefore, we get the
basis $\{\pi_{\Lambda}\}$ described in Theorem~A.
Note that this basis was  obtained in \cite{Markus-dis} for the same $\omega_0$ as above.
\end{ex}

Keep the assumption $\gt g=\gt{gl}_n$. By the weight considerations, we have
\begin{equation}\label{p-mu}
\xi_\mu =d_{\mu} \,p E_{n\,1}^{\lambda_1-\mu_1} \ldots E_{n\,n-1}^{\lambda_{n-1}-\mu_{n-1}} v_{\lambda}
\end{equation}
for some $d_{\mu}\in\mK^{^\times}$.

\begin{cl}\label{cor:tramatwo}
 For each dominant $\lambda$, the transition matrix between the canonical
and the Gelfand--Tsetlin bases of $L(\lambda)$ is triangular.
\end{cl}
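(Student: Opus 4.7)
The strategy is to combine Theorem A (via Proposition \ref{prop:triang}) with the identification of the canonical basis's PBW-parameterisation given in the Example of Section \ref{L-PBW}. The Example establishes that for the reduced decomposition $\omega_0 = s_1 s_2 s_1 \ldots s_{n-1} \ldots s_2 s_1$, the essential monomials parameterising Lusztig's canonical basis are exactly the $\pi_\Lambda$ from Theorem A. Thus both the canonical basis $\{B_\Lambda v_\lambda\}$ and the Gelfand--Tsetlin basis $\{\xi_\Lambda\}$ are indexed by the same set of Gelfand--Tsetlin patterns $\Lambda$ associated with $\lambda$, and the two bases share $\pi_\Lambda v_\lambda$ as a distinguished ``leading'' PBW term with non-zero coefficient.

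The first concrete step is to make these two triangular relations explicit. From equation \eqref{can} together with the FFLV basis property (that every monomial $\tilde m v_\lambda$ lies in the linear span of essential monomials $\pi_{\Lambda'} v_\lambda$ with $\pi_{\Lambda'} \leqslant \tilde m$ in the right opposite lexicographical order), one rewrites
\[
B_\Lambda v_\lambda = \pi_\Lambda v_\lambda + \sum_{\pi_{\Lambda'} < \pi_\Lambda} \beta_{\Lambda, \Lambda'}\, \pi_{\Lambda'} v_\lambda.
\]
Proposition \ref{prop:triang} simultaneously gives
\[
\xi_\Lambda = d_{\Lambda, \Lambda}\, \pi_\Lambda v_\lambda + \sum_{\Lambda' \succ \Lambda} d_{\Lambda, \Lambda'}\, \pi_{\Lambda'} v_\lambda, \qquad d_{\Lambda, \Lambda} \neq 0.
\]
Inverting the second (triangular in the $\succ$-order) and substituting into the first yields the transition matrix between $\{B_\Lambda v_\lambda\}$ and $\{\xi_\Lambda\}$, with $d_{\Lambda, \Lambda}^{-1}$ on the ``diagonal'' in the bijection $\Lambda \leftrightarrow \Lambda$.

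The main obstacle is that the two triangular relations are formulated with respect to different orderings: the right opposite lexicographical monomial order on $\{\pi_{\Lambda'}\}$ for the canonical basis, and the lexicographical-on-rows order $\succ$ on patterns for the Gelfand--Tsetlin basis. These do not in general refine one another. The key step is therefore to construct a total order $\blacktriangleright$ on the set of Gelfand--Tsetlin patterns such that the composed transition matrix becomes upper triangular with respect to $\blacktriangleright$ applied to both bases simultaneously. I would expect the monomial order on $\pi_\Lambda$ itself (or a suitable refinement) to suffice, since both canonical and Gelfand--Tsetlin basis vectors single out $\pi_\Lambda$ as the common distinguished PBW component; verifying that the remaining off-diagonal contributions from the two orderings propagate consistently through the composition is the key technical point where care is required.
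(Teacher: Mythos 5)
Your overall strategy coincides with the paper's: both bases should single out $\pi_\Lambda$ as a common leading term, whence triangularity. You have also correctly located the crux, namely that the two expansions you write down are triangular with respect to \emph{incompatible} orders (the monomial order for the canonical basis via \eqref{can}, the pattern order $\succ$ for the Gelfand--Tsetlin basis via Proposition~\ref{prop:triang}), and that composing two matrices each triangular for a different order proves nothing. But at exactly that point the argument stops: you ``expect the monomial order to suffice'' and defer the verification. That verification is the entire mathematical content of the corollary. What must be shown is that $\xi_\Lambda$ itself --- in the right opposite lexicographical \emph{monomial} order, not merely in the order $\succ$ on patterns --- equals a nonzero multiple of $\pi_\Lambda$ plus a combination of strictly smaller monomials applied to $v_\lambda$. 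Inverting the $\succ$-triangular relation of Proposition~\ref{prop:triang} and substituting, as you propose, does not produce this.

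The paper fills the gap as follows. The right opposite lexicographical order satisfies the hypotheses of Section~\ref{sec-rules} at every step of the chain $\gl_1\subset\cdots\subset\gl_n$, so \eqref{eq-p2} applies at each reduction: $p\ts m_1 v_\lambda\in m_1 v_\lambda+\langle \tilde m v_\lambda\mid \tilde m<m_1\rangle_{\mK}$, i.e.\ the extremal projector contributes only \emph{smaller} monomials. Combined with the analogue of \eqref{p-mu} at each level, which identifies the Gelfand--Tsetlin generator $\xi_\mu$ up to a nonzero scalar with the projected monomial, iteration down the chain yields $c_{\Lambda,\Lambda}\ts\xi_\Lambda\in\pi_\Lambda+\langle m\ts v_\lambda\mid m<m(\Lambda)\rangle_{\mK}$ with $c_{\Lambda,\Lambda}\ne0$. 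Once both $c_{\Lambda,\Lambda}\xi_\Lambda$ and $B(m(\Lambda))v_\lambda$ are expanded this way over the single monomial order on essential monomials, triangularity is immediate. So the appeal to Proposition~\ref{prop:triang} should be replaced by this direct use of \eqref{eq-p2} and \eqref{p-mu}. A minor point: $\pi_\Lambda$ already denotes the vector $m(\Lambda)\ts v_\lambda$, so the expression ``$\pi_\Lambda v_\lambda$'' in your displays is a notational slip.
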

\begin{proof}
Let $\Lambda$ be a Gelfand--Tsetlin pattern associated with
$\lambda$. Consider $\pi_\Lambda=m v_\lambda$, where   $m=m(\Lambda)$ 
is the same as in Theorem~A.

If we use the right opposite lexicographical order as above, then \eqref{eq-p2} holds
for all reduction steps along the Gelfand--Tsetlin chain of subalgebras.
For each step, the analogue of \eqref{p-mu} holds as well.
 Therefore $\pi_\Lambda$  is the leading term of $c_{\Lambda,\Lambda} \xi_{\Lambda}$.
In view of  \eqref{can}, $\pi_\Lambda$ is also the leading term of
  $B(m)\xi$.
\end{proof}

\begin{rmk}
In the case $n=3$ the canonical basis is monomial \cite[Example~3.4]{L1}. 
Thereby this particular case of
Corollary~\ref{cor:tramatwo} follows by a simple calculation with the use of the Gelfand--Tsetlin
formulas.
\end{rmk}

\begin{prop} \label{tri-dual}
There is an enumeration  of  the elements  $\xi_\Lambda$ such that
the transition matrix between $B^*_{\lambda^*}\subset L(\lambda)$ and  $\{\xi_\Lambda\}$
is triangular.
\end{prop}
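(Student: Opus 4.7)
The plan is to deduce Proposition~\ref{tri-dual} by dualising Corollary~\ref{cor:tramatwo}, this time applied to $\lambda^*$ in place of $\lambda$. First I would record what that corollary supplies in $L(\lambda^*)$: writing $B_{\lambda^*}=\{b_{\Lambda'}\}$ for the canonical basis and $\{\xi^{\lambda^*}_{\Lambda'}\}$ for the Gelfand--Tsetlin basis of $L(\lambda^*)$, both parameterised by patterns $\Lambda'$ with top row $\lambda^*$, there is an ordering $\succ$ on these patterns such that
$$
b_{\Lambda'}\;=\;\sum_{\Lambda''\succcurlyeq\Lambda'} a_{\Lambda',\Lambda''}\,\xi^{\lambda^*}_{\Lambda''},
\qquad a_{\Lambda',\Lambda'}\neq 0.
$$

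The main step is then to identify the dual basis of $\{\xi^{\lambda^*}_{\Lambda'}\}$, taken with respect to the canonical $\gl_n$-invariant pairing $L(\lambda^*)\otimes L(\lambda)\to\CC$, with a diagonal rescaling of the Gelfand--Tsetlin basis $\{\xi_\Lambda\}$ of $L(\lambda)$, under a canonical bijection $\Lambda'\mapsto (\Lambda')^\vee$ between the two pattern sets. The cleanest route uses the Gelfand--Tsetlin subalgebra $\Ac\subset\U(\gl_n)$ generated by the centres $Z(\U(\gl_k))$ for $k=1,\dots,n$: it is commutative, preserved by the anti-involution $E_{ij}\mapsto E_{ji}$, acts with simple joint spectrum on every finite-dimensional irreducible $\gl_n$-module, and the Gelfand--Tsetlin basis vectors are characterised up to scalar as its joint eigenvectors. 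Invariance of $\Ac$ under the anti-involution forces $\langle\xi^{\lambda^*}_{\Lambda'},\xi_\Lambda\rangle$ to vanish unless the two $\Ac$-eigencharacters match, which pins down the bijection $(\cdot)^\vee$ and yields a nonzero pairing on the diagonal.

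With this duality in place, the matrix expressing $B^*_{\lambda^*}=\{b^*_{\Lambda'}\}$ in the basis $\{\xi_\Lambda\}$ is obtained from $(a_{\Lambda',\Lambda''})$ by taking the inverse transpose and rescaling by the diagonal matrix of nonzero pairing constants. Since the inverse transpose of an upper triangular matrix with nonzero diagonal is lower triangular with nonzero diagonal, the expansion of $B^*_{\lambda^*}$ is upper triangular with respect to the reverse order $\succ^{\mathrm{op}}$, transported from patterns for $\lambda^*$ to patterns for $\lambda$ via $(\cdot)^\vee$. Enumerating $\{\xi_\Lambda\}$ accordingly gives the desired triangular transition matrix.

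The hard part will be the middle step, namely the duality of Gelfand--Tsetlin bases under the canonical pairing. The argument via $\Ac$ sketched above is standard but has to be written out carefully; an alternative is a direct induction along the Gelfand--Tsetlin chain $\gl_1\subset\gl_2\subset\dots\subset\gl_n$, exploiting that each branching step is multiplicity free, so that the pairing restricts to a nondegenerate pairing on each one-dimensional multiplicity space and the scalar factors can be tracked level by level. Either route reduces Proposition~\ref{tri-dual} to the already established Corollary~\ref{cor:tramatwo} and pure linear algebra.
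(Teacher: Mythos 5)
Your argument is correct and follows essentially the same route as the paper: apply Corollary~\ref{cor:tramatwo} to $\lambda^*$, use that the dual of the Gelfand--Tsetlin basis is, up to a permutation of elements and nonzero scalars, the Gelfand--Tsetlin basis of the dual module, and conclude by taking the inverse transpose of a triangular matrix. The only difference is that the paper simply asserts this duality of Gelfand--Tsetlin bases, whereas you sketch two standard justifications (via the Gelfand--Tsetlin subalgebra, or by induction along the multiplicity-free chain); either one suffices.
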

\begin{proof}
The dual basis $\{\xi_\Lambda^*\}\subset L(\lambda^*)$ is the Gelfand--Tsetlin basis of
$L(\lambda^*)$ up to a permutation of its elements and multiplications by non-zero scalars.
By Corollary~\ref{cor:tramatwo}, the transition matrix between $B_{\lambda^*}$ and
$\{\xi_{\Lambda}^*\}$ is triangular. Therefore
$B^*_{\lambda^*}$  and  $\{\xi_\Lambda\}$ are related by a triangular matrix as well.
\end{proof}

Outside type {\sf A}, these PBW-type bases become less transparent, see e.g. \cite{Markus-dis}.

\begin{ex}
Let $\gt g$ be of type {\sf C}$_n$. Choose the decomposition
$$
\omega_0=s_n s_{n-1}s_n s_{n-1} \ldots s_1s_2\ldots s_{n-1} s_n s_{n-1} \ldots s_2 s_1\, .
$$
Then $f_k\in\gt{sp}_{2n-2}$ for $k\le N-2n+1$ and
$$
f_{N-2n+2} \ldots f_N = F_{2n\,2}\ldots F_{2n\,n} F_{2n\,1} F_{2n\,n+1} \ldots F_{2n\,2n-1}\, .
$$
It is not difficult to see that such a choice produces a branching semigroup related to
$\gt{sp}_{2n} \downarrow \gt{sp}_{2n-2}$ and that this semigroup is the same as in
Section~\ref{sp2}.
\end{ex}

\subsection{Monomials in simple root vectors}
 \label{subsec:sb}
 The bases of Littelmann \cite{l:cc} arise as different  parameterisations of the canonical basis.
His construction
 involves branching and produces a basis
of $V(\lambda)$ by applying iterated negative simple root vectors to $v_\lambda$, see \cite{l:cc}
and also \cite[Sect.~11]{xfp} for a connection with the FFLV-method.
In type {\sf A}, the construction is most transparent 
\cite{peter-A}, \cite[Sect.~5\&10]{l:cc}. 

Set $f_k=E_{k+1\,k}$. The subspace $L(\lambda)^{+}$ is the linear span
of vectors $p f_{n-1}^{a_{n-1}}\ldots f_1^{a_1} v_{\lambda}$, where $a_{n-1}\ge a_{n-2}\ge \ldots\ge a_1$.
Set $a_0=0$.
By the weight considerations,
$$
\xi_{\mu}=p f_{n-1}^{a_{n-1}}\ldots f_1^{a_1} v_{\lambda} \ \text{ with } \
a_k-a_{k-1} = \lambda_k-\mu_k,
$$
up to a non-zero scalar.
In view of
the equality
$$
[f_{n-1},[f_{n-2},\dots, [f_{k+1} ,f_{k}]\dots ]] = E_{n\,k},
$$
we can conclude directly, without weight arguments,  that
\begin{equation}\label{p-string}
p f_{n-1}^{a_{n-1}}\ldots f_1^{a_1} v_\lambda =
p E_{n\,n-1}^{a_{n-1}-a_{n-2}} \ldots E_{n\,1}^{a_1} v_{\lambda}\,.
\end{equation}

A basis of $L(\la)$ is obtained
inductively, omitting extremal projectors, so that the basis vectors have the form
$$
\boldsymbol{f}(\Lambda)=f_1^{a_{n-1,1}} f_{2}^{a_{n-2,2}} f_1^{a_{n-2,1}}
\ldots  f_{n-1}^{a_{1,n-1}}\ldots f_1^{a_{1,1}} v_{\lambda}
$$
and are naturally parameterised by the Gelfand--Tsetlin patterns, see \cite[Corollary~5]{l:cc}.
In the notation of \eqref{aconl},
\begin{equation} \label{l-a}
a_{k,j}=\sum_{i=1}^{j} (\lambda_{n-k+1\,i}-\lambda_{n-k\,i}).
\end{equation}
Let $m(\Lambda)$ be the leading term of $\boldsymbol{f}(\Lambda)$ in
the monomial order used in Section~\ref{sec-A-proofs}.  Combining \eqref{eq-p2} with \eqref{p-string}, we see that  $m(\Lambda)\xi=\pi_\Lambda$ and that again $\pi_\Lambda$ is the leading term of
$c_{\Lambda,\Lambda}\xi_\Lambda$.  Summing up,
$$
\boldsymbol{f}(\Lambda)   \in \pi_\Lambda + \left< m\xi \mid m < m(\Lambda)\right>_{\mK} =
c_{\Lambda,\Lambda} \xi_\Lambda + \left< m\xi \mid m < m(\Lambda)\right>_{\mK}
$$
with a non-zero  $c_{\Lambda,\Lambda}\in\mK$. Therefore,
the transition matrices  between all three bases are
triangular. 

\begin{rmk}
Relations between different monomial bases parameterising  the canonical basis are
studied in \cite{Chari-xi}. The fact that the bases
$\{  \boldsymbol{f}(\Lambda) \}$ and $\{\pi_{\Lambda}\}$ are related by a unitary matrix can be deduced
 from the results of that  paper.
\end{rmk}

\begin{rmk}\label{rem:trama}
Let $\succ$ be the lexicographical order on $\mathbb Z^{N}$.
Choose the enumeration of the basis vectors $\boldsymbol{f}(\Lambda)$ is such a way
that the corresponding  sequences
$$
\bar a=\bar a(\Lambda)= (a_{n-1,1},a_{n-2,2},a_{n-2,1}\ldots,a_{1,1}),
$$
see \eqref{l-a}, are
decreasing w.r.t  the order $\succ$.  Then
the transition matrix between $\{\boldsymbol{f}(\Lambda)\}$ and the canonical basis of
 $L(\lambda)$ is upper triangular  and unipotent  by \cite[Prop.~10.3]{l:cc}. 
Refining the above considerations, one can show that
$$
c_{\Lambda,\Lambda} \xi_{\Lambda} \in \boldsymbol{f}(\Lambda) + \left< \boldsymbol{f}(\Lambda') \mid
\bar a(\Lambda') \succ \bar a\right>_{\mK}
$$
and thus produce a different proof of Corollary~\ref{cor:tramatwo}.
  \end{rmk}

\section{A Gelfand--Tsetlin-type basis for representations of $\spa_{2n}$}
\label{sec:wb}

We now aim to prove
an analogue of Proposition~\ref{prop:triang} for the symplectic Lie algebra
$\spa_{2n}$. The vectors $\theta^{}_{\La}$ defined in Theorem B turn out to be
related to a certain modification of the basis of \cite{m:br}.
In this section we will rely on the exposition in \cite[Ch.~9]{m:yc} to produce this
modification.

Given a  type {\sf C}
pattern $\La$
associated with
$\la$, as defined in the Introduction, set
\beql{lunivpatt}
l_{k\tss i}=\la_{k\tss i}-i-\frac12,\qquad l'_{k\tss i}=\la'_{k\tss i}-i+\frac12.
\eeq

\bth\label{thm:genactc}
The $\spa_{2n}$-module $V(\la)$ admits a basis $\ze^{}_{\La}$
parameterised by the
 type {\sf C}
patterns $\La$
associated with
$\la$ such that
the action of generators of $\spa_{2n}$ in the basis
is given by the formulas
\ben
\bal
F_{k\tss k}\ts \ze^{}_{\La}&=\Bigg(\sum_{i=1}^k\la^{}_{k\tss i}+\sum_{i=1}^{k-1}\la^{}_{k-1\ts i}
-2\sum_{i=1}^k\la'_{k\tss i}\Bigg)
\ze^{}_{\La},\\
F_{k,-k}\ts \ze^{}_{\La}
&=\sum_{i=1}^k A_{k\tss i}\ts
\ze^{}_{\La-\de'_{k\tss i}},\qquad\quad
F_{-k,k}\ts \ze^{}_{\La}
=\sum_{i=1}^k B_{k\tss i}\ts
\ze^{}_{\La+\de'_{k\tss i}},\\
F_{k-1,-k}\ts \ze^{}_{\La}&=-\sum_{i=1}^{k-1}C_{k\tss i}\ts
\ze^{}_{\La+\de^{}_{k-1\ts i}}
-\sum_{i=1}^k\sum_{j,m=1}^{k-1}D_{k\tss i\tss j\tss m}\ts
\ze^{}_{\La-\de'_{k\tss i}-\de^{}_{k-1\ts j}-\de'_{k-1\ts m}},
\eal
\een
where
\ben
\bal
A_{k\tss i}&=\prod_{a=1, \ts a\ne i}^k\frac{1}{l'_{k\tss a}-l'_{k\tss i}},\\
B_{k\tss i}&=2\ts A_{k\tss i}\ts \big(2\ts l'_{k\tss i}+1\big)\ts
\prod_{a=1}^k \big(\tss l^{}_{k\tss a}-l'_{k\tss i}\big)\ts\prod_{a=1}^{k-1}
\big(\tss l^{}_{k-1\ts a}-l'_{k\tss i}\big),\\
C_{k\tss i}&=\frac{1}{2\ts l^{}_{k-1\ts i}+1}\prod_{a=1, \ts a\ne i}^{k-1}
\frac{1}{\big(\tss l_{k-1\ts i}-l_{k-1\ts a}\big)
\big(\tss l_{k-1\ts i}+l_{k-1\ts a}+1\big)},
\eal
\een
and
\ben
D_{k\tss i\tss j\tss m}=A_{k\tss i}\tss A_{k-1\ts m}\tss C_{k\tss j}
\prod_{a=1, \ts a\ne i}^{k}\big(\tss l_{k-1\ts j}^2-l^{\prime\tss 2}_{k\tss a}\big)
\ts\prod_{a=1, \ts a\ne m}^{k-1}
\big(\tss l_{k-1\ts j}^2-l^{\prime\tss 2}_{k-1\ts a}\big).
\een
The arrays $\La\pm\de^{}_{k\tss i}$ and $\La\pm\de'_{k\tss i}$
are obtained from $\La$ by replacing $\la^{}_{k\tss i}$ and $\la'_{k\tss i}$
by $\la^{}_{k\tss i}\pm1$ and $\la'_{k\tss i}\pm1$ respectively. The vector
$\ze^{}_{\La}$ is considered to be zero if the array $\La$ is not a pattern.
\eth

\bpf
The proof is not essentially different from that of \cite[Theorem~9.6.2]{m:yc},
so we only point out some key steps and alternative choices made in the arguments.

Suppose that $\mu=(\mu_1,\dots,\mu_{n-1})$ is an $\spa_{2n-2}$-highest weight.
The multiplicity space
$V(\la)^+_{\mu}$ is nonzero
if and only if the components
of $\la$ and $\mu$ satisfy the inequalities
\beql{ineqtwmhwc}
\la_i\geqslant \mu_{i+1},\quad i=1,\dots,n-2\Fand
\mu_i\geqslant \la_{i+1},\quad i=1,\dots,n-1.
\eeq
When it is nonzero,
the vector space $V(\la)^+_{\mu}$ carries an irreducible representation
of the {\em twisted Yangian} $\Y(\spa_2)$.
By \cite[Theorem~9.4.11]{m:yc}, this representation is isomorphic to
the tensor product,
\beql{isomtp}
V(\la)^+_{\mu}\cong
L(\al_1,\be_1)\ot\dots\ot L(\al_n,\be_n),
\eeq
where
\ben
\al_i=\min\{\la_{i-1},\mu_{i-1}\}-i+\frac12,\qquad
\be_i=\max\{\la_{i},\mu_{i}\}-i+\frac12,
\een
assuming that $\la_0=\mu_0=0$ and $\max\{\la_{n},\mu_{n}\}$
is understood as being equal to $\la_n$.
Each factor $L(\al_i,\be_i)$ is the highest weight $\gl_2$-module which is extended
to the {\em evaluation module} over the {\em Yangian} $\Y(\gl_2)$.
The coproduct on the Yangian allows one to equip
the tensor product in \eqref{isomtp}
with a $\Y(\gl_2)$-module structure. This module is then restricted
to the subalgebra
$\Y(\spa_2)\subset \Y(\gl_2)$.

The required modification of the construction relies on \cite[Corollary~4.3.5]{m:yc}
which implies an alternative isomorphism
\beql{isomtpalt}
V(\la)^+_{\mu}\cong
L(-\be_1,-\al_1)\ot\dots\ot L(-\be_n,-\al_n).
\eeq
Although the tensor products in \eqref{isomtp} and \eqref{isomtpalt}
are isomorphic as $\Y(\spa_2)$-modules, they differ as $\Y(\gl_2)$-modules.
As we shall see below, the use of the alternative isomorphism leads to
a different basis of the multiplicity space $V(\la)^+_{\mu}$.

The basis vectors of $V(\la)^+_{\mu}$ will be constructed with the use of the
{\em Mickelsson--Zhelobenko algebra} $\Zr(\spa_{2n},\spa_{2n-2})$.
The {\em lowering operators} are elements of $\Zr(\spa_{2n},\spa_{2n-2})$
defined by
\beql{normali}
z_{i,-n}=pF_{i,-n}(f_i-f_{i-1})\dots(f_i-f_{-n+1}),\qquad i=-n+1,\dots,n-1,
\eeq
where $p$ is the extremal projector for $\spa_{2n-2}$, and we set
\ben
f_i=F_{ii}-i,\qquad f_{-i}=-F_{ii}+i,
\een
for all $i=1,\dots,n$. One more lowering operator $z_{n,-n}\in \Zr(\spa_{2n},\spa_{2n-2})$
is defined by
\ben
z_{n,-n}=
\sum_{n>i_1>\cdots>i_s>-n}
F_{n\tss i_1}F_{i_1i_2}\cdots F_{i_s,-n}\ts
(f_{n}-f_{j_1})\cdots (f_{n}-f_{j_k}),
\een
where $s=0,1,\dots$ and $\{j_1,\dots,j_k\}$ is the complement
to the subset $\{i_1,\dots,i_s\}$ in the set
$\{-n+1,\dots,n-1\}$. We will also need an
interpolation polynomial $Z_{n,-n}(u)$ with coefficients
in the Mickelsson--Zhelobenko algebra given by
\beql{cZn-n}
Z_{n,-n}(u)=F_{n,-n}\tss
\prod_{i=-n+1}^{n-1}(u+g_i)-\sum_{i=-n+1}^{n-1}z_{n\tss i}\tss z_{i,-n}
\prod_{j=-n+1,\ts j\ne  i}^{n-1}\frac{u+g_j}{g_i-g_j},
\eeq
where $g_i=f_i+1/2$ for all $i$ and we set $z_{n\tss i}=(-1)^{n-i}\ts z_{-i,-n}$.
This polynomial is even in $u$ and has the properties
\beql{evalzabgi}
Z_{n,-n}(-g_i)=z_{n\tss i}\tss z_{i,-n},\qquad i=-n+1,\dots,n-1,
\eeq
and
\beql{zama}
Z_{n,-n}(-g_n)=z_{n,-n}.
\eeq

Recall that the dimension of the multiplicity space $V(\la)^+_{\mu}$
equals the number
of $n$-tuples of integers $\nu=(\nu_1,\dots,\nu_n)$
satisfying the {\em betweenness conditions} \eqref{eq-nu}.
Let us set
\beql{deflga}
\ga_i=\nu_i-i+\frac12,\qquad i=1,\dots,n.
\eeq
The highest vector of the $\Y(\spa_2)$-module $V(\la)^+_{\mu}$
is given by the formula (it coincides with the vector in \cite[(9.69)]{m:yc} up to a sign):
\beql{bcdhv}
\xi_{\mu}=\prod_{i=1}^{n-1}\Big(z_{n,-i}^{\max\{\la_i,\mu_i\}-\la_i}
z_{-i,-n}^{\max\{\la_i,\mu_i\}-\mu_i}\Big)\ts \xi,
\eeq
so that following the proof of \cite[Theorem~9.5.1]{m:yc}
and using the isomorphism \eqref{isomtpalt} instead of \eqref{isomtp},
we find that the vectors
\beql{cvlm}
\prod_{i=1}^{n}Z_{n,-n}(\ga_i+1)\dots
Z_{n,-n}(\al_i-1)\tss Z_{n,-n}(\al_i)\ts\xi_{\mu}
\eeq
with $\nu$ satisfying the betweenness conditions
form a basis of $V(\la)^+_{\mu}$. By repeating the argument of that proof,
we can conclude that the vectors
\beql{cxinu}
\xi_{\nu}=
\prod_{i=1}^{n-1}z_{n,-i}^{\mu_{i}-\nu_{i+1}}
z_{-i,-n}^{\la_{i}-\nu_{i+1}}\cdot
Z_{n,-n}(\ga_1+1)\dots \tss Z_{n,-n}(\al_1)\ts\xi
\eeq
parameterised by the
$n$-tuples $\nu$
satisfying the betweenness conditions
form a basis of the multiplicity space $V(\la)^+_{\mu}$.

Taking into account the decomposition \eqref{dectp} and applying the same argument
to the subalgebras of the chain \eqref{sp-chain},
we obtain that the vectors
\ben
\xi^{}_{\La}=
\prod_{k=1,\dots,n}^{\longrightarrow}
\Big(\prod_{i=1}^{k-1}
z_{k,-i}^{\la^{}_{k-1\ts i}-\la'_{k\ts i+1}}
z_{-i,-k}^{\la^{}_{k\tss i}-\la'_{k\ts i+1}}
\cdot Z_{k,-k}(\la'_{k\ts 1}+1/2)\dots \tss Z_{k,-k}(-1/2)\Big)\ts\xi
\een
parameterised by all
patterns $\La$ associated with $\la$ form a basis of
the representation $V(\la)$ of $\spa_{2n}$. The same calculations as in the proof
of \cite[Theorem~9.6.2]{m:yc} allow one to get the formulas for the action
of the generators of the Lie algebra $\spa_{2n}$ in the basis $\xi^{}_{\La}$ and then write
them in terms of the normalised basis
vectors
\ben
\ze^{}_{\La}=\prod_{k=2}^n\ts\prod_{1\leqslant i<j\leqslant k}
\frac{1}{(-l'_{k\tss i}-l'_{k\tss j}-1)\tss !}\ts \xi^{}_{\La}
\een
thus completing the proof.
\epf

\bre\label{rem:symm}
When written for the basis vectors $\xi^{}_{\La}$, the matrix elements
of the generators of $\spa_{2n}$ provided by
Theorem~\ref{thm:genactc} and those of
\cite[Theorem~9.6.2]{m:yc}
exhibit the following symmetry: the formal replacements $\La\mapsto -\La$
together with
$l^{}_{k\tss i}\mapsto -l^{}_{k\tss i}$ and $l'_{k\tss i}\mapsto -l'_{k\tss i}$
transform the matrix elements from one case to the other.
\ere

\section{Connection between the monomial and Gelfand--Tsetlin-type bases}
\label{sec:tm}

We will demonstrate that the transition matrix between the basis $\theta^{}_{\La}$
of the $\spa_{2n}$-module $V(\la)$ provided by Theorem B and the basis $\zeta^{}_{\La}$
of Theorem~\ref{thm:genactc} is triangular.

Using the notation from the previous section,
for each $n$-tuple $\nu$ satisfying the betweenness conditions, introduce
the vector $\eta^{}_{\nu}\in V(\la)^+_{\mu}$ by
\beql{cxinueta}
\eta^{}_{\nu}=
\prod_{i=1}^{n-1}z_{n,-i}^{\mu_{i}-\nu_{i+1}}
z_{-i,-n}^{\la_{i}-\nu_{i+1}}\ts
F_{n,-n}^{{\ts}-\nu_1}\ts\xi,
\eeq
where we let $\xi$ denote the highest weight vector of $V(\lambda)$.
By a result of Zhelobenko~\cite[Theorem~6.1]{z:gz}, the vectors $\eta^{}_{\nu}$
form a basis of $V(\la)^+_{\mu}$. This fact will also follow from a relationship
between the vectors $\xi^{}_{\nu}$ and $\eta^{}_{\nu}$ as described in the next lemma.
We will consider the lexicographical orderings $\succ$ on the set of $n$-tuples $\nu$
and on the set of $(n-1)$-tuples $\mu$.

\ble\label{lem:trilomo}
For any $\nu$ we have the relation
\ben
\eta^{}_{\nu}=\sum_{\nu'\succcurlyeq \nu} c_{\nu,\nu'}\tss \xi^{}_{\nu'}
\een
for some constants $c_{\nu,\nu'}$, and $c_{\nu,\nu}\ne 0$.
In particular, the vectors $\eta^{}_{\nu}$ form a basis of $V(\la)^+_{\mu}$.
\ele

\bpf
Since $F_{n,-n}$ commutes with the lowering operators $z_{n\ts j}$, the vector
\eqref{cxinueta} can be written as $\eta^{}_{\nu}=F_{n,-n}^{{\ts}-\nu_1}\tss \xi^{}_{\nu^{0}}$,
where $\nu^{0}$ is the $n$-tuple obtained from $\nu$ by replacing $\nu_1$ with $0$.
On the other hand, by the formulas of Theorem~\ref{thm:genactc} for any $\nu$ we have
\ben
F_{n,-n}\ts\xi_{\nu}=\sum_{i=1}^n\prod_{a=1,\ts a\ne i}^n
\frac{1}{\ga_i^2-\ga_a^2}\ \xi_{\nu-\de_i}.
\een
A repeated application of this formula allows us to write
$F_{n,-n}^{{\ts}-\nu_1}\tss \xi^{}_{\nu^{0}}$ as a linear combination
of the basis vectors $\xi_{\nu}$ which clearly has the required form.
\epf

Lemma~\ref{lem:trilomo} implies that the vectors
\beql{etala}
\eta^{}_{\La}=\prod_{k=1,\dots,n}^{\longrightarrow}
\Bigg(F_{k,-k}^{{\ts}-\la'_{k\tss 1}}\prod_{i=1}^{k-1}
z_{k,-i}^{\la^{}_{k-1\ts i}-\la'_{k\ts i+1}}
z_{-i,-k}^{\la^{}_{k\tss i}-\la'_{k\ts i+1}}
\Bigg)\ts\xi,
\eeq
parameterised by all  type {\sf C} patterns $\La$ associated with $\la$
form a basis of the representation $V(\la)$.

Since the weight $\mu$ will now be varied, we will denote the vector
\eqref{cxinueta} by $\eta^{}_{\nu\mu}$.
The following lemma is essentially a particular case of
\cite[Theorem~7]{z:gz} or \cite[Lemma~2]{z:ep}.

\ble\label{lem:mscomp}
For any given pair $(\nu,\mu)$
satisfying the betweenness conditions,
in the module $V(\la)$ we have
\beql{transi}
F_{n,-n}^{{\ts}-\nu_1}\ts
\prod_{i=1}^{n-1}F_{n,-i}^{\mu_{i}-\nu_{i+1}}
F_{-i,-n}^{\la_{i}-\nu_{i+1}}\ts
\xi =
c\ts\tss \eta^{}_{\nu\mu}
+\sum_{\nu',\ts\mu'}\ts u(\nu',\mu')\ts
\eta_{\nu'\mu'}
\eeq
for a nonzero constant $c$ and some elements $u(\nu',\mu')\in\U(\n^-_0)$,
where the sum is taken over the pairs $(\nu',\mu')$
satisfying the betweenness conditions, and $u(\nu',\mu')=0$ unless $\mu'\succ\mu$, or
$\mu'=\mu$ and $\nu'\succ\nu$.
\ele

\bpf
Write the product on the left hand side in the order
\ben
F_{n,-n+1}^{\mu_{n-1}-\nu_{n}}\dots F_{n,-1}^{\mu_{1}-\nu_{2}}\ts F_{n,-n}^{{\ts}-\nu_1}\ts
F_{-1,-n}^{\la_{1}-\nu_{2}}\dots F_{-n+1,-n}^{\la_{n-1}-\nu_{n}}\ts
\xi.
\een
Taking into account that
$F_{n,-k}=F_{k,-n}$ for positive values of $k$, start from the rightmost
generator and proceed to the left by using the
{\em inversion formula} \cite[Lemma~9.2.2]{m:yc} to replace $F_{i,-n}$
with $i=-n+1,\dots,n-1$
by the expression:
\ben
F_{i,-n}=pF_{i,-n}+\sum_{i>i_1>\dots>i_s>-n}
F_{i\tss i_1}F_{i_1i_2}\dots F_{i_{s-1}i_s}\ts pF_{i_s,-n}
\frac{1}{(f_{i_s}-f_{i})(f_{i_s}-f_{i_1})\dots (f_{i_s}-f_{i_{s-1}})},
\een
summed over $s=1,2,\dots$.
Apply relation \eqref{normali} to write the right hand side of
the inversion formula in terms of the lowering operators $z_{k,-n}$.
We will use the following property of these
operators: $z_{i,-n}$ and $z_{j,-n}$ commute for $i+j\ne 0$;
see \cite[Proposition~9.2.5]{m:yc}.
Let ${\tilde\n}^-_0$ denote the subalgebra of $\n^-_0$ spanned by the elements $F_{j\tss i}$
with $1\leqslant i<j\leqslant n-1$.
The same argument as in the proof of Lemma~\ref{lem:hwcomp}
shows that
\ben
F_{n,-n}^{{\ts}-\nu_1}\ts F_{-1,-n}^{\la_{1}-\nu_{2}}\dots F_{-n+1,-n}^{\la_{n-1}-\nu_{n}}\ts
\xi
= d\ts\tss \eta_{\nu\tilde\nu}
+\sum_{\si\succ\nu}\ts u(\si)\ts
\eta_{\si\tilde\nu}
\een
for a nonzero constant $d$ and some elements $u(\si)\in\U({\tilde\n}^-_0)$, where
$\tilde\nu=(\nu_2,\dots,\nu_n)$. Now we will be applying the inversion formula
for positive values of $i$ and note that each term with $i_s<0$ in the sum
on the right hand side contains a generator $F_{i_k\tss i_{k+1}}$ with $i_k>0>i_{k+1}$.
However, such a generator commutes with all elements $F_{i,-n}$ for $i>0$.
Therefore, all these terms with $i_s<0$ will only contribute to the sum on the right hand side
of the expansion \eqref{transi} within the summands of the form $u(\nu',\mu')\ts\eta_{\nu'\mu'}$
with $\mu'\succ\mu$.

On the other hand, for any element $u\in\U({\tilde\n}^-_0)$ we have the relation
\ben
F_{i,-n}\tss u=u\tss F_{i,-n}+\sum_{j=i+1}^{n-1}\tss F_{j,-n}\tss u_j
\een
for certain elements $u_j\in\U({\tilde\n}^-_0)$. Hence, considering
the terms in the inversion formula with the property $i_s>0$, we may conclude that
nonzero summands on the right hand side
of \eqref{transi} of the form $u(\nu',\mu)\ts\eta_{\nu'\mu}$ must have the property
$\nu'\succcurlyeq\nu$ and $u(\nu,\mu)$ is a nonzero constant.
\epf

Consider the
vectors $\xi^{}_{\Lambda}\in V(\la)$ introduced in Section~\ref{sec:wb}.
They are parameterised by the type {\sf C}  patterns $\La$ defined in the Introduction.
Represent each pattern $\La$ associated with $\la$ as the sequence of the rows:
\ben
\La=(\bar\la^{}_{n-1},\bar\la'_{n},
\bar\la^{}_{n-2},\bar\la'_{n-1},\dots,\bar\la'_{1}),\qquad
\een
where we set
\ben
\bar\la_{k}=(\la_{k\tss 1},\dots,\la_{k\tss k})\Fand \bar\la'_{k}=(\la'_{k\tss 1},\dots,\la'_{k\tss k}).
\een
Introduce the lexicographical ordering $\succ$ on the sequences $\La$ by using
the lexicographical orderings on the vectors $\bar\la_{k}$ and
$\bar\la'_{k}$.
Recall the vectors $\theta^{}_{\La}$ defined  in Theorem B. We can now obtain another proof
of the theorem.

\bpr\label{prop:triangc}
For each  type {\sf C} pattern $\La$ associated with $\la$,
in the module $V(\la)$ we have
\ben
\theta^{}_{\La}=\sum_{\La'\succcurlyeq \La} c_{\La,\La'}\tss \xi^{}_{\La'}
\een
for some constants $c_{\La,\La'}$, and $c_{\La,\La}\ne 0$.
In particular, $\theta^{}_{\La}$ is a basis of $V(\la)$.
\epr

\bpf
We will use an induction on $n$. Consider the part of the product
defining the vector $\theta^{}_{\La}$
which corresponds to the value $k=n$. By applying
Lemma~\ref{lem:mscomp} and using the induction hypothesis, we can write
$\theta^{}_{\La}$ as a linear combination
of the basis vectors $\eta^{}_{M}$ defined in
\eqref{etala} so that it contains the vector $\eta^{}_{\La}$
with a nonzero coefficient, while the remaining vectors occurring in the
linear combination have the property $M\succ \La$. It remains
to expand the vectors $\eta^{}_{M}$ as linear combinations of basis vectors
$\xi^{}_{\La'}$ by using Lemma~\ref{lem:trilomo} which yields the
expansion of $\theta^{}_{\La}$ with the required properties.
\epf

\begin{rmk}
The inversion formula can be  used  also for rewriting the basis  of Proposition~\ref{sp-prop} in terms of
the lowering operators.   Therefore  the subspace
$V(\lambda)^+$ has a basis
$$
\{ F_{2n\,1}^b z_{2n\,n}^{b_n+\iota_n a_n} z_{n\,1}^{b_n+(1-\iota_n)a_n} \ldots z_{2n\,2}^{b_2+\iota_2a_2} z_{2\,1}^{b_2+(1-\iota_2)a_2}  v_\lambda
\mid a_2,\ldots a_n, b, b_2,\ldots b_n \text{ satisfy } \eqref{1}-\eqref{4}
\},
$$
where $\iota_k\in\{0,1\}$.
\end{rmk}

\end{document}